\newcommand{\ud}{\mathrm{d}}
\newcommand{\F}{\mathsf{F}}
\newcommand{\G}{\mathsf{G}}
\newcommand{\Hg}{\mathsf{H}}
\newcommand{\K}{\mathsf{K}}
\newcommand{\ub}{\mathbf{u}}
\newcommand{\zb}{\mathbf{z}}
\newcommand{\xb}{\mathbf{x}}
\newcommand{\Xb}{\mathbf{X}}
\newcommand{\yb}{\mathbf{y}}
\newcommand{\Yb}{\mathbf{Y}}
\newcommand{\mb}{\mathbf{m}}
\newcommand{\Cb}{\mathbf{C}}
\newcommand{\tb}{\mathbf{t}}
\newcommand{\bbb}{\mathbf{b}}
\newcommand{\Abb}{\mathbf{A}}
\newcommand{\pb}{\mathbf{p}}
\newcommand{\Zb}{\mathbf{Z}}
\newcommand{\lb}{\mathbf{l}}
\newcommand{\nub}{\boldsymbol{\nu}}
\newcommand{\Abn}{\mathbf{A}_n}
\newcommand{\Lb}{\mathbf{L}}
\newcommand{\xib}{\boldsymbol{\xi}}
\newcommand{\alphab}{\boldsymbol{\alpha}}
\newcommand{\epsilonb}{\boldsymbol{\epsilon}}
\newcommand{\R}{\mathbb{R}}
\newcommand{\E}{\mathds{E}}
\newcommand{\N}{\mathbb{N}}
\newcommand{\ddr}{\mathrm{d}}
\newcommand{\ep}{\varepsilon}
\newcommand{\cepsm}{c_{\varepsilon,m}}
\newtheorem{theorem}{Theorem}
\newtheorem{proposition}[theorem]{Proposition}
\newtheorem{lem}[theorem]{Lemma}
\newtheorem{definition}[theorem]{Definition}
\newtheorem{remark}[theorem]{Remark}
\providecommand{\keywords}[1]
{
  \small	
  \textbf{\textit{Keywords:}} #1
}
\begin{document}

\title{The power of private likelihood-ratio tests for goodness-of-fit in frequency tables}


\author[1]{Emanuele Dolera\thanks{emanuele.dolera@unipv.it}}
\author[2]{Stefano Favaro \thanks{stefano.favaro@unito.it}}
\affil[1]{\small{Department of Mathematics, University of Pavia, Italy}}
\affil[2]{\small{Department of Economics and Statistics, University of Torino and Collegio Carlo Alberto, Italy}}

\maketitle

\begin{abstract}
Privacy-protecting data analysis investigates statistical methods under privacy constraints. This is a rising challenge in modern statistics, as the achievement of confidentiality guarantees, which typically occurs through suitable perturbations of the data, may determine a loss in the statistical utility of the data. In this paper, we consider privacy-protecting tests for goodness-of-fit in frequency tables, this being arguably the most common form of releasing data, and present a rigorous analysis of the large sample behaviour of a private likelihood-ratio (LR) test. Under the framework of $(\varepsilon,\delta)$-differential privacy for perturbed data, our main contribution is the power analysis of the private LR test, which characterizes the trade-off between confidentiality, measured via the differential privacy parameters $(\varepsilon,\delta)$, and statistical utility, measured via the power of the test. This is obtained through a Bahadur-Rao large deviation expansion for the power of the private LR test, bringing out a critical quantity, as a function of the sample size, the dimension of the table and $(\varepsilon,\delta)$, that determines a loss in the power of the test. Such a result is then applied to characterize the impact of the sample size and the dimension of the table, in connection with the parameters $(\varepsilon,\delta)$, on the loss of the power of the private LR test. In particular, we determine the (sample) cost of $(\varepsilon,\delta)$-differential privacy in the private LR test, namely the additional sample size that is required to recover the power of the Multinomial LR test in the absence of perturbation. Our power analysis rely on a non-standard large deviation analysis for the LR, as well as the development of a novel (sharp) large deviation principle for sum of i.i.d. random vectors, which is of independent interest.
\end{abstract}

\keywords{Bahadur-Rao large deviation expansion; convolutional-type exponential mechanism; differential privacy; Edgeworth expansion; likelihood-ratio test; non-standard large deviation analysis; power analysis; truncated Laplace exponential mechanism}


\section{Introduction}

Privacy-protecting data analysis is a rising subject in modern statistics, building upon the following challenge: for given data, say $D$, how to determine a transformation $\mathcal{M}$, called (perturbation) mechanism, such that if $\mathcal{M}(D)$ is released then confidentiality will be protected and also the value of $D$ for statistical analysis, called utility, will be preserved in $\mathcal{M}(D)$? Measuring utility is common in statistics, and the decrease of utility arising from releasing $\mathcal{M}(D)$ rather than $D$ may be measured as the loss in the accuracy of a statistical method applied to the undertaken data analysis. Measuring confidentiality has been attracting much attention in computer science, where differential privacy (DP) has been put forth as a mathematical framework to quantify privacy guarantees \citep{Dwo(06),Dwo(06b)}. Roughly speaking, DP requires that the distribution of $\mathcal{M}(D)$ remains almost unchanged when an individual is included or removed from $D$, thus ensuring that nothing can be learnt about individuals. In a recent work, \citet{Rin(17)} provided a comprehensive, and practically oriented, treatment of DP in the dissemination of frequency tables, this being arguably the most common form of releasing data. Consider data to be arranged in a list of $k>1$ cells $\mathbf{a}=(a_{1},\ldots,a_{k})$, with $\sum_{1\leq i\leq k}a_{i}=n>k$, where $a_{i}$ is the number of individuals taking the attribute values corresponding to cell $i$, for $i=1,\ldots,k$. Under the curator framework of DP, or global DP, \citet{Rin(17)} introduced a class of truncated exponential  mechanisms (EMs) and showed how they allow to increase utility of the perturbed list $\mathbf{b}=\mathcal{M}(\mathbf{a})$ at the cost of relaxing DP to the $(\varepsilon,\delta)$-DP \citep{Dwo(06b),Dwo(13)}. The parameters $\varepsilon$ and $\delta$ control the level of privacy against intruders:  privacy guarantees become more stringent as $\varepsilon$ and $\delta$ tend to zero, with the DP corresponding to $\delta=0$. Then, an empirical analysis of truncated EMs is presented for the problem of testing goodness-of-fit in frequency tables of dimension $k=2$, showing the effect of the perturbation in the power of Pearson's chi-squared and likelihood-ratio (LR) tests. See \citet{Wan(15)} and \citet{Kif(17)} for similar analyses under DP.

\subsection{Our contributions}
In this paper, we present a rigorous analysis of the large sample behaviour of the LR test for goodness-of-fit under the $(\varepsilon,\delta)$-DP framework of \citet{Rin(17)}. We focus on the popular truncated Laplace EM, though our results can be easily extended to a broad class of truncated EMs discussed in \citet{Rin(17)}. The perturbed list $\mathbf{b}$ is assumed to be modeled as the convolution between a Multinomial distribution with parameter $(n,\mathbf{p})$ and the distribution of i.i.d truncated (discrete) Laplace random variables on $\{-m,\ldots,m\}$ with location $0$ and scale $\varepsilon^{-1}$, in such a way that the sample size of $\mathbf{b}$ is $n$. Under such a model, which is referred to as the  ``true" model,  we consider the LR test to assess goodness-of-fit in the form: $H_{0}:\mathbf{p}=\mathbf{p}_{0}$, for a fixed $\mathbf{p}_{0}\in\Delta_{k-1}$, against $H_{1}:\mathbf{p}\neq\mathbf{p}_{0}$, where $\Delta_{k-1}=\{\mathbf{p}\in[0,1]^{k-1}\text{ : }|\mathbf{p}|\leq 1\}$ and $|\mathbf{p}|=\sum_{1\leq i\leq k-1}p_{i}$. First, we establish an Edgeworth expansion of the distribution the ``true" LR, providing a ``private", and more accurate, version of the classical chi-squared limit of the LR \citep{Wil(38)}. Then, our main contribution provides a quantitative characterization of the trade-off between confidentiality, measured via the DP parameters $\varepsilon$ and $m$, and utility, measured via the power of the LR test. In particular, we rely on non-standard large deviation analysis \citep{Pet(75),SS(91)} to establish a Bahadur-Rao large deviation expansion for the power of the ``true" LR test. This result provides a ``private" version of a theorem by \citet{Hoe(65),Hoe(67)} on the power of the Multinomial LR test. See also \citet{Bah(60),Rao(62),Bah(67),Efr(67),Efr(68)}. Our large deviation expansion brings out a critical quantity, as a function of $n$, $k$, $\varepsilon$ and $m$, which determines a loss in the power of the ``true" LR test. This leads to characterize the impact of $n$ and $k$, in connection with the parameters $(\varepsilon,\delta)$, on the loss of the power of the private LR test. Concretely, we determine the (sample) cost of $(\varepsilon,\delta)$-DP under the ``true" LR test, namely the additional sample size required to recover the power of the Multinomial LR test in the absence of perturbation.

As a complement to our power analysis of the ``true" LR test, we investigate the well-known problem of releasing negative values in frequency tables under the $(\varepsilon,\delta)$-DP. The ``true" model allows for negative values in the perturbed list $\mathbf{b}$, which may be questionable in the context of frequency tables. If publishing data with negative values is not acceptable for some reason, then the common policy is to post-process $\mathbf{b}$ by reporting negative values as zeros, which preserves $(\varepsilon,\delta)$-DP \citep{Dwo(13)}. However, as a matter of fact, releasing lists that have an appearance similar to that of original lists may lead to ignoring the perturbation and analyze data as if they were not perturbed, which is known empirically to provide unreliable conclusions \citep{Fie(10),Rin(17)}. Following \citet{Rin(17)}, we consider a ``na\"ive" model for the perturbed list $\mathbf{b}$, that is a statistical model that does not take the exponential EM into account. In particular, if $\mathbf{b}^{+}=(b_{1}^{+},\ldots,b_{k}^{+})$ denotes the post-processed list, which is obtained from the $(\varepsilon,\delta)$-DP list $\mathbf{b}$ by setting negative values to be equal to zeroes, then the ``na\"ive" model assumes $\mathbf{b}^{+}$ to be modeled as the Multinomial distribution with parameter $\sum_{1\leq i\leq k}b_{i}^{+}$ and $\mathbf{p}$.  Under the ``na\"ive" model, we consider the LR test to assess goodness-of-fit, and we establish an Edgeworth expansion of the distribution of the ``na\"ive" LR. This result allows to identify a critical quantity, as a function of $k$, $n$ and the variance of the truncated Laplace EM, which determines a loss in the statistical significance of the ``na\"ive" LR test with respect to the ``true" LR test. Our analysis thus shows the importance of taking the perturbation into account, and presents a first rigorous evidence endorsing the release of negative values when the $(\varepsilon,\delta)$-DP is adopted. 

\subsection{Related literature}
Under the curator framework of $(\varepsilon,\delta)$-DP, some recent works have considered the problem of privacy-protecting tests for goodness-of-fit in frequency tables. Our work is the first to adopt a LR approach for an arbitrary dimension $k\geq2$ and a two-sided alternative hypothesis. The work of \citet{Awa(18)} is closely related to ours, as they consider a truncated EM and adopt a LR approach. However, \citet{Awa(18)} assume $k=2$ and, motivated by the study of uniformly most powerful private tests, they consider pointwise and one-sided alternative hypotheses. The works of \citet{Gab(16)} and \citet{Kif(17)} are also related to our work, as they assume $k\geq2$ and they consider a two-sided alternative hypothesis. However, besides not considering truncated EMs, these works do not adopt a LR approach, introducing private tests through suitable perturbations of Pearson's chi-squared test. Other recent works, though less related to ours, consider a minimax analysis for a class of identity tests in the form $H_{0}:\mathbf{p}=\mathbf{p}_{0}$ against $H_{1}:d_{TV}(\mathbf{p},\mathbf{p}_{0})\geq\rho$, for some choice of $\rho>0$, with $d_{TV}$ being the total variation distance \citep{Cai(17),Ach(18),Ali(18),Cum(18),Can(19),Can(20)}. In general, to the best of our knowledge, our work is the first to make use of the power of the test to quantify the trade-off between confidentiality and utility. This is achieved through the use of non-standard large deviation analysis, which is known to be challenging in a setting such as ours, where the statistical model is discrete, multidimensional and not belonging to the exponential family. In particular, our Bahadur-Rao large deviation expansion relies on the development of a novel (sharp) large deviation principle for sum of i.i.d. random vectors, which is of independent interest.

\subsection{Organization of the paper}
The paper is structured as follows. In Section \ref{sec2} we recall the curator framework of $(\varepsilon,\delta)$-DP in the context of frequency tables, define the class of truncated EMs, and introduce some related terminology and notation. In Section \ref{sec3} we introduce the ``true" model and establish a Bahadur-Rao large deviation expansion for the power of the ``true" LR test, quantifying the trade-off between confidentiality and utility. In Section \ref{sec4} we introduce the ``na\"ive" model and establish an Edgeworth expansion for the distribution of the ``na\"ive" LR, quantifying the loss in the statistical significance of the ``na\"ive" LR test with respect to the ``true" LR test. Section \ref{sec5} contains some concluding remarks and directions for future work. Proofs are deferred to Appendix \ref{Appa} for $k=2$ and to Appendix \ref{Appb} for $k>2$.


\section{The curator framework of $(\varepsilon,\delta)$-DP}\label{sec2}

Before presenting our main results in Section \ref{sec3}, it is helpful to recall the definitions of $(\varepsilon,\delta)$-DP and truncated EM, and to introduce some related terminology and notation \citep{Dwo(06),Dwo(13),Rin(17)}. Under the curator framework of $(\varepsilon,\delta)$-DP for frequency tables, or global $(\varepsilon,\delta)$-DP, the list $(a_{1},\ldots,a_{k})$ is centrally stored and a trusted curator is responsible for its perturbation. This is different from local $(\varepsilon,\delta)$-DP, under which individual data points are perturbed \citep{Dwo(13)}. For notational convenience, let $\mathbf{a}=(a_{1},\ldots,a_{k-1})$ and $a_{k}=n-|\mathbf{a}|$, with $|\mathbf{a}|=\sum_{1\leq i\leq k-1}a_{i}$. We consider a class of (randomized) perturbation mechanisms $\mathcal{M}$ on a universe $\mathcal{A}$ and with range $\mathcal{B}$, and we denote by $\mathcal{B}(\mathbf{a})$ the range of the perturbed list $\mathbf{b}=\mathcal{M}(\mathbf{a})$ such that $\mathcal{B}(\mathbf{a})\subseteq\mathcal{B}$; here, it is assumed that $\mathcal{A}=\mathcal{B}$, that is $\mathbf{b}$ has the same structure as $\mathbf{a}$. As we have recalled in the introduction, a privacy loss occurs when an intruder can learn from the perturbed list $\mathbf{b}$ about an individual contributing to the original list $\mathbf{a}$. To quantify such a privacy loss, it is useful to consider two neighbouring lists $\mathbf{a},\mathbf{a}^{\prime}\in\mathcal{A}$, denoted by $\mathbf{a}\sim \mathbf{a}^{\prime}$, meaning that $\mathbf{a}^{\prime}$ can be obtained from $\mathbf{a}$ by adding or removing exactly one individual. Then, the $(\varepsilon,\delta)$-DP provides a suitable measure on how much can be learnt about any individual by taking the ratio between the likelihood of the perturbed list $\mathbf{b}=\mathcal{M}(\mathbf{a})$ and the likelihood of the neighbouring perturbed list $\mathbf{b}^{\prime}=\mathcal{M}(\mathbf{a}^{\prime})$ \citep{Dwo(06),Dwo(13)}. The LR may be alternatively viewed as a posterior odds ratio, or Bayes factor, from a Bayesian perspective. Placing an upper bound on such a LR motivates the definition of $(\varepsilon,\delta)$-DP, and then leads to the following definition of $(\varepsilon,\delta)$-DP mechanism.

\begin{definition}\label{defdiff_pri}
\citep{Dwo(13)} For every $\varepsilon,\delta\geq0$ we say that a mechanism $\mathcal{M}$ satisfies $(\varepsilon,\delta)$-DP if for all $\mathbf{a}, \mathbf{a}^{\prime}\in\mathcal{A}$ such that $\mathbf{a}\sim \mathbf{a}^{\prime}$ and all $S\subseteq\mathcal{B}$,
\begin{equation}\label{ediff_pri_2}
\mathrm{Pr}[\mathcal{M}(\mathbf{a})\in S]\leq \text{e}^{\varepsilon}\mathrm{Pr}[\mathcal{M}(\mathbf{a}^{\prime})\in S]+\delta.
\end{equation}
\end{definition}

The definition of $\varepsilon$-DP by \citet{Dwo(06)} arises from Definition \ref{defdiff_pri} by setting $\delta=0$. For small values of $\varepsilon$, the $\varepsilon$-DP guarantees that the distribution of the perturbed list $\mathbf{b}$ is not affected by the data of any single individual. This leads to protect individuals' confidentiality agains intruders, in the sense that the data of any single individual is not reflected in the released data $\mathbf{b}$. The definition of $(\varepsilon,\delta)$-DP has been proposed as a relaxation of $\varepsilon$-DP to reduce confidentiality protection in a controlled way, and hence to increase the utility of the released data \citep{Dwo(13)}. According to Definition \ref{defdiff_pri}, the parameter $\delta$ adds flexibility to $\varepsilon$-DP by allowing $\mathbf{b}$ to have a probability $\delta$ of having un undesirable LR with a higher associated disclosure risk. That is, $\delta$ may be interpreted as the probability of data accidentally being leaked, thus suggesting that $\delta$ should be small. An implication of \eqref{ediff_pri_2} is that with probability $\delta$ the data may be released unperturbed, though the $(\varepsilon,\delta)$-DP mechanism  described in this paper never releases the whole unperturbed data set. In general, the choice of $\varepsilon$ and $\delta$ should take into account a balance between confidentiality and utility of the released data. See \citet{Dwo(13)} and \citet{Ste(16)} for a discussion on the choice of $\delta$ in connection with the sample size $n$ and the utility of the data. We refer to \citet{Rin(17)} for a comprehensive discussion of $(\varepsilon,\delta)$-DP, as well as of other relaxations of $\varepsilon$-DP, in the context of frequency tables.

As a perturbation mechanism $\mathcal{M}$, in this paper we consider a truncated version of the Laplace EM, which is arguably the most popular EM \citep{Rin(17)}. However, our results can be easily extended the broad class of truncated convolutional-type EMs. For $\mathbf{a}\in\mathcal{A}$ and $\mathbf{b}\in\mathcal{B}(\mathbf{a})$ consider an additive utility function of the form $u(\mathbf{a},\mathbf{b})=\sum_{1\leq i\leq k}v(a_{i},b_{i})$ for some function $v$, which enables us to specify a mechanism that perturbs the cells of a list $\mathbf{a}$ independently, and impose that $|a_{i}-b_{i}|\leq m$ for a truncation level $m\in\mathbb{N}_{0}$, for any $i=1,\ldots,k-1$. If $w(\mathbf{b};\mathbf{a})$ is the conditional probability that the list $\mathbf{a}$ is perturbed to the list $\mathbf{b}$, then a truncated EM is defined as follows
\begin{equation}\label{exp_mec}
w(\cdot;\,\mathbf{a})\propto\exp\left\{\eta\frac{u(\mathbf{a},\,\cdot)}{\Delta u(\mathbf{a})}\right\},
\end{equation}
 where $\eta$ is a value that depends on $\varepsilon$, and $\Delta u$ is defined as $\Delta u(\mathbf{a})= \max_{\mathbf{b}\in\mathcal{B}(\mathbf{a}^{\prime})}\max_{\mathbf{a}\sim \mathbf{a}^{\prime}\in\mathcal{A}} |u(\mathbf{a},\mathbf{b})-u(\mathbf{a}^{\prime},\mathbf{b})|$. According to \eqref{exp_mec}, a truncated EM attaches higher probability to perturbed lists with higher utility. Truncated convolutional-type EMs correspond to the choice $v(a_{i},b_{i})=g(a_{i}-b_{i})$, for some function $g$. In particular, the truncated Laplace EM is a truncated convolutional-type EM for the choice $v(a_{i},b_{i})=-|a_{i}-b_{i}|$. The next theorem states that truncated convolutional-type truncated EMs are $(\varepsilon,\delta)$-DP mechanism, with $\delta>0$ depending on $m$ and the utility function. We refer \citet[Section 4]{Rin(17)} for details on the calculation of $\delta$ as a function of $\varepsilon$ and $m$.
  
 \begin{theorem}\label{teo22}
\citep{Rin(17)} Let $u$ be an utility function of the form $g(\mathbf{a}-\mathbf{b})$ for some function $g$, and let $\mathcal{M}$ be a mechanism such that $\text{Pr}[\mathcal{M}(\mathbf{a})=\mathbf{b}]\propto\exp\{\varepsilon u(\mathbf{a},\mathbf{b})/\Delta u(\mathbf{a})\}$ for all lists $\mathbf{a}\in\mathcal{A}$ and all $\mathbf{b}\in\mathcal{B}(\mathbf{a})$ such that $|a_{i}-b_{i}|\leq m\leq+\infty$ for any $i=1,\ldots,k-1$. Assume that for all $\mathbf{a},\mathbf{a}^{\prime}\in\mathcal{A}$ such that $\mathbf{a}\sim \mathbf{a}^{\prime}$ it holds that $\text{Pr}[\mathcal{M}(\mathbf{a}^{\prime})=\mathbf{b}]=0$ implies $\text{Pr}[\mathcal{M}(\mathbf{a})=\mathbf{b}]<\delta$. Then the mechanism $\mathcal{M}$ is $(\varepsilon,\delta)$-DP, with $\delta=0$ when $m=+\infty$.
\end{theorem}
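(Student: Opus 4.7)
The approach is to reduce \eqref{ediff_pri_2} to a pointwise density bound wherever both neighbouring mechanisms place mass, and to absorb the leftover outputs---those produced by $\mathcal{M}(\mathbf{a})$ but inaccessible to $\mathcal{M}(\mathbf{a}^{\prime})$ because of the truncation $|a_{i}-b_{i}|\leq m$---into the additive slack $\delta$ via the standing hypothesis. This is the classical exponential-mechanism route, adapted to the truncated convolutional kernel.

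The first ingredient I would exploit is the convolutional form $u(\mathbf{a},\mathbf{b})=g(\mathbf{a}-\mathbf{b})$. Setting $\Delta=\Delta u(\mathbf{a})$ and performing the translation $\mathbf{d}=\mathbf{a}-\mathbf{b}$, the normalizer
\begin{equation*}
Z(\mathbf{a})=\sum_{\mathbf{b}\,:\,|a_{i}-b_{i}|\leq m}\exp\bigl\{\varepsilon g(\mathbf{a}-\mathbf{b})/\Delta\bigr\}=\sum_{\mathbf{d}\,:\,|d_{i}|\leq m}\exp\bigl\{\varepsilon g(\mathbf{d})/\Delta\bigr\}
\end{equation*}
is visibly independent of $\mathbf{a}$; in addition, $\Delta$ itself depends only on increments of $g$ and is therefore constant across $\mathcal{A}$. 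From the bound on $|u(\mathbf{a},\mathbf{b})-u(\mathbf{a}^{\prime},\mathbf{b})|$ built into the definition of $\Delta u$, I obtain, for every output $\mathbf{b}$ that belongs to both $\mathcal{B}(\mathbf{a})$ and $\mathcal{B}(\mathbf{a}^{\prime})$, the clean pointwise bound
\begin{equation*}
\frac{\mathrm{Pr}[\mathcal{M}(\mathbf{a})=\mathbf{b}]}{\mathrm{Pr}[\mathcal{M}(\mathbf{a}^{\prime})=\mathbf{b}]}=\exp\!\left\{\frac{\varepsilon}{\Delta}\bigl(g(\mathbf{a}-\mathbf{b})-g(\mathbf{a}^{\prime}-\mathbf{b})\bigr)\right\}\leq\text{e}^{\varepsilon}.
\end{equation*}

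For a given $S\subseteq\mathcal{B}$ I would then decompose $S=S_{0}\cup S_{1}$, with $S_{0}=\{\mathbf{b}\in S\,:\,\mathrm{Pr}[\mathcal{M}(\mathbf{a}^{\prime})=\mathbf{b}]>0\}$ and $S_{1}=S\setminus S_{0}$. Integrating the pointwise ratio over $S_{0}$ gives $\mathrm{Pr}[\mathcal{M}(\mathbf{a})\in S_{0}]\leq\text{e}^{\varepsilon}\mathrm{Pr}[\mathcal{M}(\mathbf{a}^{\prime})\in S_{0}]\leq\text{e}^{\varepsilon}\mathrm{Pr}[\mathcal{M}(\mathbf{a}^{\prime})\in S]$, while the standing hypothesis bounds $\mathrm{Pr}[\mathcal{M}(\mathbf{a})\in S_{1}]$ by $\delta$; summing the two pieces yields \eqref{ediff_pri_2}. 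When $m=+\infty$ there is no truncation, so $\mathcal{B}(\mathbf{a})=\mathcal{B}(\mathbf{a}^{\prime})$ for every neighbouring pair, $S_{1}$ is empty, and one may take $\delta=0$. The main obstacle is precisely the bookkeeping for the sliver $S_{1}$: one must verify that the pointwise hypothesis genuinely aggregates into a bound by $\delta$, which rests on the fact that---because $\mathbf{a}$ and $\mathbf{a}^{\prime}$ differ in a single cell and the perturbation factorises across cells---$S_{1}$ lies on the single boundary facet where one coordinate of the noise hits $\pm m$, so that controlling its total mass amounts to the explicit calculation of $\delta$ as a function of $\varepsilon$ and $m$ referenced in Section~4 of \citet{Rin(17)}.
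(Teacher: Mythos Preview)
The paper does not contain its own proof of Theorem~\ref{teo22}: the result is quoted from \citet{Rin(17)} and is simply stated, with the reader referred to Section~4 of that reference for the derivation of $\delta$ as a function of $\varepsilon$ and $m$. So there is no ``paper's proof'' to compare against here.

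Your argument is the standard exponential-mechanism route and is essentially correct. Two remarks. First, you have correctly isolated the only non-trivial point: the hypothesis in the theorem is stated \emph{pointwise} (each forbidden output $\mathbf{b}$ has mass below $\delta$ under $\mathcal{M}(\mathbf{a})$), whereas \eqref{ediff_pri_2} requires the \emph{total} mass of $S_{1}$ to be at most $\delta$. Your resolution---that neighbours differ in a single cell, the mechanism factorises across cells, and hence $S_{1}$ is a single facet $\{b_{i}=a_{i}\pm m\}$ whose total mass is exactly $c_{\varepsilon,m}^{-1}e^{-\varepsilon m}$---is the right one, and it is precisely the computation alluded to after Theorem~\ref{teo22} and made explicit for the Laplace case just below \eqref{lap}. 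Second, your observation that the normaliser $Z(\mathbf{a})$ is translation-invariant under the convolutional form is what makes the pointwise ratio reduce to $\exp\{\varepsilon(g(\mathbf{a}-\mathbf{b})-g(\mathbf{a}'-\mathbf{b}))/\Delta\}$ with no extraneous factor; this is a genuine simplification over the generic exponential mechanism, where one usually incurs a $2\varepsilon$ in place of $\varepsilon$ because the normalisers differ.
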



\section{LR tests for goodness-of-fit under $(\varepsilon,\delta)$-DP}\label{sec3}

For any $n,k\in\mathbb{N}$ with $k< n$, let $(a_{1},\ldots,a_{k})$ be the list to be released, such that $n=\sum_{1\leq i\leq k}a_{i}$. We consider the Multinomial model for $\mathbf{a}$, that is the list $\mathbf{a}$ is assumed to be the realization of the random variable $\mathbf{A}_{n}=(A_{1,n},\ldots,A_{k-1,n})$ distributed as a Multinomial distribution with parameter $n\in\mathbb{N}$ and $\mathbf{p}=(p_{1},\ldots,p_{k-1})\in\Delta_{k-1}=\{\mathbf{p}\in[0,1]^{k-1}\text{ : }|\mathbf{p}|\leq 1\}$, where $|\mathbf{p}|=\sum_{1\leq i\leq k-1}p_{i}$. Then,
\begin{displaymath}
\text{Pr}[\mathbf{A}_{n}=\mathbf{a}]=\frac{n!}{(n-|\mathbf{a}|)!}\left(\prod_{i=1}^{k-1}\frac{p_{i}^{a_{i}}}{a_{i}!}\right)\left(1-|\mathbf{p}|\right)^{n-|\mathbf{a}|}\mathbbm{1}_{\mathcal{P}_{n,k-1}}(\mathbf{a}),
\end{displaymath}
where $\mathcal{P}_{n,k-1}:=\{\mathbf{a}\in\mathbb{N}_{0}^{k-1}\text{ : }|\mathbf{a}|\leq n\}$. We assume that $\mathbf{a}$ is perturbed by means of the truncated Laplace EM. More precisely, for $\varepsilon\geq0$ and $m\in\mathbb{N}_{0}$ the cells of $\mathbf{a}$ are perturbed independently through the conditional distribution
\begin{equation}\label{lap}
w(b_{i};a_{i})=\frac{1}{c_{\varepsilon,m}}\text{e}^{-\varepsilon|b_{i}-a_{i}|}\mathbbm{1}_{\{-m,\ldots,0,\ldots,m\}}(|b_{i}-a_{i}|)
\end{equation}
for $i=1,\ldots,k-1$, where $c_{\varepsilon,m}=\sum_{-m\leq l\leq m}\exp\{-\varepsilon|l|\}$, and $b_{k}=n-|\mathbf{b}|$. The resulting perturbed list $\mathbf{b}$ has the same sample size $n$ as the original list $\mathbf{a}$, i.e. $n=\sum_{1\leq i\leq k}a_{i}=\sum_{1\leq i\leq k}b_{i}$. According to Theorem \ref{teo22}, the truncated Laplace EM is $(\varepsilon,\delta)$-DP with respect to the utility function $u(\mathbf{a},\mathbf{b})=-\sum_{1\leq i\leq k-1}|a_{i}-b_{i}|$, with $\delta=c_{\varepsilon,m}^{-1}\exp\{-\varepsilon m\}$ \citep[Section 5]{Rin(17)}. Throughout this paper, we assume $m<n$, thus excluding the case $m=+\infty$ that corresponds to $\delta=0$, i.e. the $\varepsilon$-DP. See Lemma \ref{lemma_likelihood} for details. 

The ``true" or natural model for $\mathbf{b}$ is defined as a statistical model that takes into account the truncated Laplace EM. Because of the form of the distribution \eqref{lap}, the truncated Laplace EM may be viewed as adding, independently for each cell of $\mathbf{a}$, random variables that are i.i.d. as a truncated (discrete) Laplace distribution. In particular, let $\mathbf{L}=(L_{1},\ldots,L_{k-1})$ be a random variable independent of $\mathbf{A}_{n}$, and such that the $L_{i}$'s are i.i.d. according to
\begin{equation} \label{Laplace}
\text{Pr}[L=l]=\frac{1}{c_{\varepsilon,m}}\text{e}^{-\varepsilon|l|}\mathbbm{1}_{\{-m,\ldots,0,\ldots,m\}}(l),
\end{equation}
i.e. the $m$-truncated Laplace distribution with location $0$ and scale $\varepsilon^{-1}$. Then, the ``true" model assumes that $\mathbf{b}$ is the realization of a random variable whose distribution is the convolution between the distributions of $\mathbf{A}_{n}$ and $\mathbf{L}$, i.e. the distribution of $\mathbf{B}_{n}=(B_{1,n},\ldots,B_{k-1,n})$ with $B_{i,n}=A_{i,n}+L_{i}$ for $i=1,\ldots,k-1$, and $B_{k,n}=n-|\mathbf{B}_{n}|$. The corresponding likelihood function is
\begin{equation}\label{true_mod}
L_{n,T}(\mathbf{p};\mathbf{b})\propto\left(\prod_{i=1}^{k-1}\sum_{l_{i}=-m\vee(b_{i}-n)}^{m\wedge b_{i}}\frac{p_{i}^{b_{i}-l_{i}}\text{e}^{-\varepsilon|l_{i}|}}{(b_{i}-l_{i})!}\right)(1-|\mathbf{p}|)^{n-|\mathbf{b}-\mathbf{l}|}.
\end{equation}
Under the ``true" model, we make use of the LR test to assess goodness-of-fit in the form: $H_{0}\text{ : }\mathbf{p}=\mathbf{p}_{0}$, for a fixed $\mathbf{p}_{0}\in\Delta_{k-1}$, against $H_{1}\text{ : }\mathbf{p}\neq\mathbf{p}_{0}$. Without loss of generality, we assume that $\mathbf{p}_{0}$ belongs to the interior of $\Delta_{k-1}$. We present a careful large sample analysis, in terms of Edgeworth expansions, of the distribution of the LR and of the power of the corresponding LR test.  Such an analysis is new for likelihood functions in the convolutional form \eqref{true_mod}, and it is definitely a challenging task due to the fact that the ``true" model is discrete, multidimensional and not belonging to the exponential family.

While we focus on the popular truncated Laplace EM, our analysis and results can be easily extended to any truncated convolutional-type EMs. An example is the truncated Gaussian EM, such that for any $\varepsilon\geq0$ and $m\in\mathbb{N}_{0}$ the cells of the list $\mathbf{a}$ are perturbed independently through the conditional distribution
\begin{displaymath}
w(b_{i};a_{i})=\frac{1}{d_{\varepsilon,m}}\text{e}^{-\frac{\varepsilon}{2m+1}(b_{i}-a_{i})^{2}}\mathbbm{1}_{\{-m,\ldots,0,\ldots,m\}}(|b_{i}-a_{i}|)
\end{displaymath}
for $i=1,\ldots,k-1$, where $d_{\varepsilon,m}=\sum_{-m\leq l\leq m}\exp\{-\varepsilon l^{2}/(2m+1)\}$, and $b_{k}=n-|\mathbf{b}|$. According to Theorem \ref{teo22}, the truncated Gaussian EM is $(\varepsilon,\delta)$-DP with respect to the utility function $u(\mathbf{a},\mathbf{b})=-\sum_{1\leq i\leq k-1}(a_{i}-b_{i})^{2}$, with $\delta=d_{\varepsilon,m}^{-1}\exp\{-\varepsilon m^{2}/(2m+1)\}$. The truncated Gaussian EM corresponds to add, independently for each cell of $\mathbf{a}$, random variables that are i.i.d. as the $m$-truncated (discrete) Gaussian distribution with location $0$ and (squared) scale $(2m+1)/2\varepsilon$, i.e.
\begin{equation}\label{gauss_n}
\text{Pr}[G=g]=\frac{1}{d_{\varepsilon,m}}\text{e}^{-\frac{\varepsilon}{2m+1}g^{2}}\mathbbm{1}_{\{-m,\ldots,0,\ldots,m\}}(g).
\end{equation}
The resulting ``true" model for $\mathbf{b}$ has a likelihood function in a convolutional form similar to \eqref{true_mod}, and therefore our analysis and results under the truncated Laplace EM can be easily adapted to the truncated Gaussian EM. We refer to \citet{Rin(17)} for other examples of truncated convolutional-type EMs.

\subsection{The ``true" LR test, and a ``private" Wilks theorem}

For any parameter $\mathbf{p}\in\Delta_{k-1}$, we denote by $\hat{\mathbf{p}}_{n,T}$ be the maximum likelihood estimator of $\mathbf{p}$ under the ``true" model for the perturbed list $\mathbf{b}$. Such an estimator is not available in a closed-form expression. Then, we define the ``true" LR as follows
\begin{equation} \label{LR_multidim}
\Lambda_{n,T}(\mathbf{p}_{0})=2\log\left(\frac{L_{n,T}(\hat{\mathbf{p}}_{n,T};\mathbf{B}_{n})}{L_{n,T}(\mathbf{p}_{0};\mathbf{B}_{n})}\right),
\end{equation}
and denote by $\text{Pr}[\Lambda_{n,T}(\mathbf{p}_{0})\in\cdot;\,\mathbf{p}_{0}]$ the distribution of $\Lambda_{n,T}(\mathbf{p}_{0})$ computed with respect to $\mathbf{A}_{n}$ distributed as a Multinomial distribution with parameter $(n,\mathbf{p}_{0})$. For any $\mathbf{p}_{0}$ in the interior of $\Delta_{k-1}$, let $\Lambda_{n}(\mathbf{p}_{0})$ be the Multinomial LR, namely the LR in the absence of perturbation, which is obtained from $\Lambda_{n,T}(\mathbf{p}_{0})$ by setting $m=0$ and/or $\varepsilon\rightarrow+\infty$. If $\F_{n}(t)=\mathrm{Pr}[\Lambda_{n}(\mathbf{p}_{0})\leq t;\,\mathbf{p}_{0}]$ and $\K_{k}$ is the cumulative distribution function of a chi-squared distribution with $k\geq1$ degrees of freedom, then a classical result by \citet{Wil(38)} shows that for any $t>0$
\begin{equation}\label{class_wil}
\lim_{n\rightarrow+\infty}\F_{n}(t)=\K_{k-1}(t).
\end{equation}
See also \citet[Chapter 4]{Fer(02)}, and references therein, for a detailed account on Wilks' theorem and generalizations thereof. The next theorem establishes an Edgeworth expansion, with respect to  $n\rightarrow+\infty$, of the distribution of $\Lambda_{n,T}(\mathbf{p}_{0})$. Such a result provides a ``private" and refined version of \eqref{class_wil}.

\begin{theorem}\label{teo32}
Let $\F_{n, T}(t)=\mathrm{Pr}[\Lambda_{n,T}(\mathbf{p}_{0})\leq t;\,\mathbf{p}_{0}]$ be the cumulative distribution function of $\Lambda_{n,T}(\mathbf{p}_{0})$. For any $n,k\in\mathbb{N}$ and any $t>0$ it holds true that
\begin{equation} \label{BE_LRTk}
\F_{n,T}(t)= \K_{k-1}(t)+\frac{c_{1,k,\mathbf{p}_{0}}(t)}{n^{1/2}} +\frac{c_{\ast,k,\mathbf{p}_{0}}(t;m,\varepsilon)}{e^{t/2}\sqrt{2\pi n}} 
+\frac{c_{2,k,\mathbf{p}_{0}}(t)}{n} + R_{(k,n),\mathbf{p}_{0},T}(t),
\end{equation}
with 
\begin{displaymath}
|R_{(k,n),\mathbf{p}_{0},T}(t)| \leq \frac{C_{k,\mathbf{p}_{0},T}(t; m, \varepsilon)}{n^{3/2}},
\end{displaymath}
where the functions $c_{1,k,\mathbf{p}_{0}}$ and $c_{2,k,\mathbf{p}_{0}}$ are independent of $n$ and of the distribution of $\mathbf{L}$, i.e. independent of $\varepsilon$ and $m$, the function $c_{\ast,k,\mathbf{p}_{0}}$ is independent of $n$ and such that $|c_{\ast,k,\mathbf{p}_{0}}| \leq (k-1)$, and the function $C_{k,\mathbf{p}_{0},T}$ is independent of $n$.
\end{theorem}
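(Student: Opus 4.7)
The plan is to establish \eqref{BE_LRTk} by combining a stochastic Taylor expansion of the LR around $\mathbf{p}_0$ with a multivariate Edgeworth expansion for the standardized sum $\mathbf{Z}_n := n^{-1/2}(\mathbf{B}_n - n\mathbf{p}_0)$. Because the ``true'' likelihood $L_{n,T}$ in \eqref{true_mod} is a convolution and is not from an exponential family, the MLE $\hat{\mathbf{p}}_{n,T}$ is only implicitly defined, so the first step is to trade $\hat{\mathbf{p}}_{n,T}$ for a series expansion in $\mathbf{Z}_n$ via the implicit function theorem applied to the score equation.

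First I would set $\ell_{n,T}(\mathbf{p}):=\log L_{n,T}(\mathbf{p};\mathbf{B}_n)$ and, using a uniform control of the second derivative of $\ell_{n,T}$ in a shrinking neighbourhood of $\mathbf{p}_0$ together with the consistency of $\hat{\mathbf{p}}_{n,T}$ under $H_0$, produce a stochastic expansion
\begin{displaymath}
\hat{\mathbf{p}}_{n,T}-\mathbf{p}_0 \;=\; n^{-1/2}\mathbf{I}_T(\mathbf{p}_0)^{-1}\mathbf{S}_n \;+\; n^{-1}\mathbf{W}_n \;+\; O_p(n^{-3/2}),
\end{displaymath}
where $\mathbf{S}_n := n^{-1/2}\nabla\ell_{n,T}(\mathbf{p}_0)$ and $\mathbf{W}_n$ is a polynomial in $\mathbf{S}_n$ whose coefficients involve higher derivatives of $\ell_{n,T}$ at $\mathbf{p}_0$. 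Inserting this expansion into the fourth-order Taylor expansion of $\ell_{n,T}$ around $\mathbf{p}_0$ gives
\begin{displaymath}
\Lambda_{n,T}(\mathbf{p}_0) \;=\; \mathbf{S}_n^{\top}\mathbf{I}_T(\mathbf{p}_0)^{-1}\mathbf{S}_n \;+\; n^{-1/2}P_3(\mathbf{S}_n)\;+\;n^{-1}P_4(\mathbf{S}_n) \;+\; O_p(n^{-3/2}),
\end{displaymath}
with $P_3,P_4$ explicit polynomials in the score whose coefficients depend on the cumulants of a single summand of $\mathbf{B}_n$. Since $\mathbf{S}_n$ is itself a smooth function of the sample mean $n^{-1}\mathbf{B}_n$, this reduces the proof to analysing a smooth transformation of a sum of i.i.d.\ integer-valued vectors.

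Next I would establish a multivariate Edgeworth expansion, up to order $n^{-3/2}$, for the joint law of $\mathbf{Z}_n$. Because each summand $\mathbf{U}_i+\mathbf{L}_i$ (with $\mathbf{U}_i$ a single Multinomial trial and $\mathbf{L}_i$ an independent truncated Laplace vector) is integer-valued and has non-degenerate covariance, the usual continuous Edgeworth expansion is not directly available: the characteristic function of $\mathbf{Z}_n$ has to be analyzed separately near the origin (producing the smooth Bhattacharya--Rao polynomial corrections) and near the non-trivial points of the dual lattice $2\pi\mathbb{Z}^{k-1}$ (producing the lattice corrections). Transferring the resulting expansion through the smooth map $\mathbf{Z}_n\mapsto \Lambda_{n,T}(\mathbf{p}_0)$ obtained above, and integrating up to level $t$, yields \eqref{BE_LRTk}: the smooth part collapses into the Bartlett-type terms $c_{1,k,\mathbf{p}_0}(t)/\sqrt{n}$ and $c_{2,k,\mathbf{p}_0}(t)/n$ which, thanks to the symmetry of the truncated Laplace around $0$ and the additivity of the information under convolution, turn out to depend only on the cumulants of the Multinomial component and therefore to be free of $(\varepsilon,m)$; the lattice part, after convolution with the $\chi^2_{k-1}$ density, produces the non-standard term $c_{\ast,k,\mathbf{p}_0}(t;m,\varepsilon)e^{-t/2}/\sqrt{2\pi n}$, and the bound $|c_\ast|\leq k-1$ follows from a componentwise estimate of each lattice character, $k-1$ being the number of integer coordinates of $\mathbf{B}_n$.

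The hard part will be the lattice analysis. For $k=2$ it reduces to a one-dimensional saddle-point estimate which I would carry out directly by summation by parts on the characteristic function of $A_{1,n}+L_1$ (Appendix \ref{Appa}). For $k>2$, I would split $[-\pi,\pi]^{k-1}$ into a neighbourhood of $0$ and its complement, bound the characteristic function outside using the strict convexity of the Multinomial log-characteristic function together with a lower bound on $\bigl|\sum_{l=-m}^m e^{-\varepsilon|l|}e^{i\theta l}\bigr|$ away from $\theta\in 2\pi\mathbb{Z}$, and combine these estimates with moment bounds for $\mathbf{S}_n$ to extract a uniform remainder of order $n^{-3/2}$; this is where the constant $C_{k,\mathbf{p}_0,T}(t;m,\varepsilon)$ appears (Appendix \ref{Appb}). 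Keeping careful track, throughout the characteristic-function calculation, of which summands contribute to $(\varepsilon,m)$-dependent terms and which do not is the most delicate piece of bookkeeping in the argument.
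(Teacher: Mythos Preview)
Your proposal contains a structural misreading of the model that breaks the argument. You write that ``each summand $\mathbf{U}_i+\mathbf{L}_i$ (with $\mathbf{U}_i$ a single Multinomial trial and $\mathbf{L}_i$ an independent truncated Laplace vector)'' is the basic unit, so that $\mathbf{B}_n$ is a sum of $n$ i.i.d.\ integer vectors. That is not the mechanism here: there is a \emph{single} $(k-1)$-dimensional Laplace vector $\mathbf{L}$ added to the entire table, so $\mathbf{B}_n=\mathbf{A}_n+\mathbf{L}$ with $\mathbf{A}_n\sim\mathrm{Mult}(n,\mathbf{p}_0)$ and $\mathbf{L}$ fixed in $n$. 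Consequently $\mathbf{Z}_n=n^{-1/2}(\mathbf{A}_n-n\mathbf{p}_0)+n^{-1/2}\mathbf{L}$ is \emph{not} a standardized i.i.d.\ sum, and your Edgeworth expansion built from the cumulants of ``$\mathbf{U}_i+\mathbf{L}_i$'' is the wrong object. The paper instead applies the lattice Edgeworth expansion to the pure Multinomial part and then averages over the single shift $\mathbf{L}/\sqrt{n}$ by Taylor expansion; this is what produces, at order $n^{-1}$, the extra term $\tfrac{\mathsf{Var}(L_1)}{2n}\Delta\Phi_{\mathbf{0},\boldsymbol\Sigma}$ in the distribution of $\mathbf{Z}_n$.

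Because of this, your justification for the $(\varepsilon,m)$-independence of $c_{1}$ and $c_{2}$ (``symmetry of the truncated Laplace'' plus ``additivity of the information under convolution'') does not go through. Symmetry only kills odd cumulants; in your i.i.d.\ picture the second and fourth Laplace cumulants would still contaminate the $n^{-1}$ term. The actual mechanism is a genuine cancellation between two $\mathsf{Var}(L)$-contributions of opposite origin: (i) the true LR statistic, because the likelihood \emph{knows} about the perturbation, has leading quadratic form $\mathbf{Z}_n^{\top}\bigl(\mathbb{I}(\mathbf{p}_0)-\tfrac{\mathsf{Var}(L_1)}{n}\mathbb{I}(\mathbf{p}_0)^2\bigr)\mathbf{Z}_n$, so the acceptance region is dilated by a factor involving $\mathsf{Var}(L_1)/n$; and (ii) the distribution of $\mathbf{Z}_n$ picks up the $\tfrac{\mathsf{Var}(L_1)}{2n}\Delta\Phi$ term from the single convolution. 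When one integrates the second over the first, the two $\mathsf{Var}(L_1)$-pieces annihilate exactly (for $k=2$ this is the identity $x\Phi'(x)+\Phi''(x)=0$; for $k>2$ it is the matrix computation with $\mathbb{I}(\mathbf{p}_0)^{-1}=\boldsymbol\Sigma(\mathbf{p}_0)$). Your outline never produces the first of these two pieces, so there is nothing for the Laplace correction in the Edgeworth expansion to cancel against. Fixing the model description and tracking this cancellation is the crux of the proof; the rest of your plan (implicit-function expansion of the MLE, lattice analysis via the $S_j$-functions, bounding the oscillatory part by $k-1$ componentwise) is in line with what the paper does.
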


See Appendix \ref{Appa} and Appendix \ref{Appb} for the proof of Theorem \ref{teo32} with $k=2$ and $k>2$, respectively. The functions $c_{1,k,\mathbf{p}_{0}}$, $c_{\ast,k,\mathbf{p}_{0}}$ and $c_{2,k,\mathbf{p}_{0}}$ in Theorem \ref{teo32}, as well as $C_{k,\mathbf{p}_{0},T}$, can be made explicit by gathering some equations in the proof. In particular,  $c_{1,k,\mathbf{p}_{0}}$ and $c_{2,k,\mathbf{p}_{0}}$ are the same that would appear in the Edgeworth expansion of the distribution of $\Lambda_{n}(\mathbf{p}_{0})$, which ensues as a corollary of \eqref{BE_LRTk} by setting $m=0$ and/or $\varepsilon \to +\infty$. For a fixed (reference) level of significance $\alpha\in(0,1)$, the ``true" LR test to assess goodness-of-fit has a rejection region $\{\Lambda_{n,T}>\lambda_{T}(\alpha)\}$, with the critical point $\lambda_{T}(\alpha)$ being determined in such a way that $\F_{n,T}(\lambda_{T}(\alpha))=1-\alpha$. According to Theorem \ref{teo32}, if $n$ increases then the distribution $\F_{n,T}$ of $\Lambda_{n,T}$ becomes close to $\K_{k-1}$, and for a fixed $n\geq1$ the parameters $\varepsilon$ and $m$ interact with $k$ and $n$ to control such a closeness. For a fixed $n\geq1$, Theorem \ref{teo32} allows us to determine $\lambda_{T}(\alpha)$ as a function of $\varepsilon$ and $m$, thus quantifying the contribution of the truncated Laplace EM, in terms $\varepsilon$ and $m$, to the rejection region of the ``true" LR test. From \eqref{BE_LRTk},  the truncated Laplace EM gives its most significant contribution in the term of order $n^{-3/2}$, whereas in the limit its contribution vanishes. Hence, we write
\begin{displaymath}
\lambda_{T}(\alpha) = \lambda(\alpha)+\frac{\tilde{c}_{1,k,\mathbf{p}_{0}}(\alpha)}{n^{1/2}} +\frac{\tilde{c}_{2,k,\mathbf{p}_{0}}(\alpha)}{n}+
\frac{\tilde{c}_{3,k,\mathbf{p}_{0}}(n;\alpha;\varepsilon,m)}{n^{3/2}},
\end{displaymath}
where $\lambda$, $\tilde{c}_{1,k,\mathbf{p}_{0}}$ and $\tilde{c}_{2,k,\mathbf{p}_{0}}$ are positive constants independent of $n, \varepsilon$ and $m$, while $\tilde{c}_{3,k,\mathbf{p}_{0}}(n;\alpha;\varepsilon,m)$ is, for fixed $\alpha, \varepsilon$ and $m$, a bounded function of $n$. Along the same lines of the proof of Theorem \ref{teo32}, it is easy to show that an analogous result holds true for the truncated Gaussian EM, and in general  for any truncated convolutional-type EMs. Theorem \ref{teo32} provides a preliminary result to the study of the power of the ``true" LR.  A further discussion of Theorem \ref{teo32} is deferred to Section \ref{sec4}, with respect to a comparison between the rejection regions defined under the ``true" model and the ``na\"ive" model, the latter being a statistical model that does not take into account the truncated Laplace EM.  

\subsection{The power of the ``true" LR test, and a ``private" Bahadur-Rao theorem} \label{sec3.2}

For a fixed $\alpha\in(0,1)$, Theorem \ref{teo32} provides the critical point $\lambda_{T}(\alpha)$ of the ``true" LR test. Then assuming $\mathbf{p}_{0}$ and $\mathbf p_{1}$ in the interior of $\Delta_{k-1}$, and such that $\mathbf p_{1} \neq \mathbf p_{0}$, the power of the ``true" LR test with respect to $\mathbf p_{1}$ 
is defined as
\begin{displaymath}
\beta_{n,T}(\mathbf p_{1};\alpha)=\text{Pr}[\Lambda_{n,T}(\mathbf{p}_{0})>\lambda_{T}(\alpha);\,\mathbf{p}_{1}],
\end{displaymath}
with $\text{Pr}[\Lambda_{n,T}(\mathbf{p}_{0})\in\cdot;\,\mathbf{p}_{1}]$ being computed with respect to $\mathbf{A}_{n}$ distributed as a Multinomial distribution with parameter $(n,\mathbf{p}_{1})$. The Kullback-Leibler divergence between two Multinomial distributions of parameters $(1, \mathbf p)$ and $(1, \mathbf q)$ is
\begin{displaymath}
\mathcal D_{KL}(\mathbf p\,\|\, \mathbf q) =\E_{\mathbf p}\left[\log\left(\prod_{i=1}^{k-1}\left(\frac{p_{i}}{q_{i}}\right)^{M_{i}}\right)+\log\left(\frac{p_{k}}{q_{k}} \right)^{M_{k}}\right] 
= \sum_{i=1}^k p_{i} \log\frac{p_{i}}{q_{i}},
\end{displaymath}
where $p_{k} = 1-|\mathbf{p}|$, $q_{k} = 1-|\mathbf{q}|$ and $M_{k}=1-\sum_{1\leq i\leq k-1}M_{i}$. We denote by $\beta_{n}(\mathbf{p}_{1};\alpha)$ the power of the Multinomial LR test with rejection region $\{\Lambda_{n}(\mathbf p_{0})>\lambda(\alpha)\}$. A classical result from \cite{Rao(62)} and \citet{Bah(67)} shows that
\begin{equation}\label{as_unp}
\lim_{n\rightarrow+\infty} -\frac{1}{n}\log(1-\beta_n(\mathbf p_{1};\alpha)) = \mathcal D_{KL}(\mathbf p_0\,\|\, \mathbf p_1).
\end{equation}
See \citet{Hoe(65),Hoe(67)} for refinements of \eqref{as_unp} in terms of Bahadur-Rao large deviation expansions of $-n^{-1}\log(1-\beta_n(\mathbf p_{1};\alpha))$, which introduce the dependence on $\alpha$ in the right-hand side of \eqref{as_unp} \citep{Efr(67),Efr(68)}.

We show that the large sample behaviour \eqref{as_unp} holds true under the ``true" model for $\mathbf{b}$. Precisely, for a fixed $\mathbf{p}_{1}\in \Delta_{k-1}$ such that $\mathbf p_{1} \neq \mathbf p_{0}$ we show that
\begin{equation} \label{lim_powe}
\lim_{n\rightarrow+\infty} -\frac{1}{n}\log(1-\beta_{n,T}(\mathbf p_{1};\alpha)) = \mathcal D_{KL}(\mathbf p_0\,\|\, \mathbf p_1). 
\end{equation}
Note that Equation \eqref{lim_powe} does not show explicitly the contribution of the truncated Laplace EM to the power $\beta_{n,T}(\mathbf p_{1};\alpha))$ of the ``true" LR test, hiding the parameters $\varepsilon$ and $m$, as well as it does not show any contribution of the (reference) level of significance $\alpha$. In other terms, the right-hand side of \eqref{lim_powe} does not depend on $\varepsilon$, $m$ and $\alpha$. To bring out the contribution of the truncated Laplace EM to $\beta_{n,T}(\mathbf p_{1};\alpha))$, we introduce a refinement of \eqref{lim_powe} in the sense of \citet{Hoe(65),Hoe(67)}. In particular, in the next theorem we rely on non-standard large deviation analysis \citep{Pet(75),SS(91),vBa(67)} in order to establish a Bahadur-Rao large deviation expansion of $n^{-1}\log(1-\beta_{n,T}(\mathbf p_{1};\alpha))$. Our result thus provides a ``private" and refined version of \eqref{as_unp}. This is a critical tool, as it leads to a quantitative characterization of the tradeoff between confidentiality, measured via $\varepsilon$ and $m$, and utility of the data, measured via the power of the ``true" LR test. As a corollary, the next theorem allows us to define the (sample) cost of $(\varepsilon,\delta)$-DP for the ``true" LR test, that is the additional sample size that is required in order to recover the power of the Multinomial LR test.

\begin{theorem}\label{teo33}
For a fixed (reference) level of significance $\alpha$, let $\lambda(\alpha)$ be the critical point such that $\K_{k-1}(\lambda(\alpha))=1-\alpha$. For any $k\in\mathbb{N}$, it holds true that
\begin{align}\label{LD1}
&-\frac{1}{n}\log(1-\beta_{n,T}(\mathbf p_{1};\alpha))\\
&\notag\quad= \mathcal D_{KL}(\mathbf p_0\,\|\, \mathbf p_{1}) + \frac{c_{1,k,(\mathbf p_0, \mathbf p_{1})}(\lambda(\alpha))}{\sqrt{n}} + \frac{k}{4}\left(\frac{\log n}{n}\right) +
\frac{c_{2,k,(\mathbf p_0, \mathbf p_{1})}(\lambda(\alpha))}{n} \nonumber \\
&\notag\quad\quad-\frac{1}{n}\log \mathfrak M_L\left(\nabla_{\mathbf p_0}\mathcal D_{KL}(\mathbf p_0\,\|\, \mathbf p_{1})\right)
+o\left(\frac{1}{n}\right)
\end{align}
as $n \to +\infty$, where $\mathfrak M_L(\zb)= \E[\exp\{\mathbf L \zb\}] \geq 1$, i.e. the moment generating function of $\mathbf{L}$ evaluated in $\mathbf{z}$, and $c_{1,k,(\mathbf p_0, \mathbf p_{1})}$ and $c_{2,k,(\mathbf p_0, \mathbf p_{1})}$ are constants independent of $n$.
\end{theorem}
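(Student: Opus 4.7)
The plan is to write $1-\beta_{n,T}(\mathbf{p}_{1};\alpha) = \Pr[\Lambda_{n,T}(\mathbf{p}_{0}) \leq \lambda_T(\alpha);\mathbf{p}_{1}]$ as the probability of a shrinking large-deviation event for the perturbed count vector $\mathbf{B}_n = \mathbf{A}_n + \mathbf{L}$ under $\mathbf{A}_n \sim \mathrm{Mult}(n,\mathbf{p}_{1})$, and then to apply a sharp Bahadur--Rao expansion. First, by a Taylor expansion of the log-likelihood $\log L_{n,T}$ around the (consistent) maximum-likelihood estimator $\hat{\mathbf{p}}_{n,T}$, I would show that, up to terms of smaller order, the acceptance region $\{\Lambda_{n,T}(\mathbf{p}_{0})\leq\lambda_T(\alpha)\}$ coincides with $\{\mathbf{B}_n/n \in E_n\}$, where $E_n$ is an ellipsoid centred at $\mathbf{p}_{0}$ of chi-squared radius $\lambda_T(\alpha)$ and linear scale of order $n^{-1/2}$. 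The Theorem~\ref{teo32} expansion for $\lambda_T(\alpha)-\lambda(\alpha)$ then contributes the $c_{1,k,(\mathbf{p}_{0},\mathbf{p}_{1})}/\sqrt n$ and $c_{2,k,(\mathbf{p}_{0},\mathbf{p}_{1})}/n$ corrections in \eqref{LD1} via a first-order expansion of the rate function on this shrinking set.

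Second, since $\mathbf{L}$ has bounded support, one can split
\[
\Pr[\mathbf{B}_n/n \in E_n;\mathbf{p}_{1}] = \sum_{\mathbf{l}\in\{-m,\ldots,m\}^{k-1}} \Pr[\mathbf{A}_n\in nE_n-\mathbf{l};\mathbf{p}_{1}]\,\Pr[\mathbf{L}=\mathbf{l}],
\]
and for each $\mathbf{l}$ apply a Hoeffding-type sharp large-deviation estimate for the multinomial $\mathbf{A}_n$ on the shrinking target $nE_n-\mathbf{l}$. The expected form is
\[
\Pr[\mathbf{A}_n\in nE_n-\mathbf{l};\mathbf{p}_{1}] \sim C(\lambda(\alpha),\mathbf{p}_{0},\mathbf{p}_{1})\,n^{-k/4}\,e^{-n\mathcal{D}_{KL}(\mathbf{p}_{0}\|\mathbf{p}_{1})}\,e^{\mathbf{l}\cdot\nabla_{\mathbf{p}_{0}}\mathcal{D}_{KL}(\mathbf{p}_{0}\|\mathbf{p}_{1})},
\]
which I would establish by exponential tilting to the family centred at $\mathbf{p}_{0}$, a Stone-type local central limit theorem for the tilted lattice measure, and explicit integration over the shrinking ellipsoid $nE_n-\mathbf{l}$. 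The tilt $e^{\mathbf{l}\cdot\nabla_{\mathbf{p}_{0}}\mathcal{D}_{KL}}$ encodes the shift by $\mathbf{l}$ to first order, while the $n^{-k/4}$ prefactor is the unperturbed Hoeffding constant that produces the $(k/4)(\log n)/n$ correction on passing to $-n^{-1}\log$.

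Third, summing over $\mathbf{l}$ factorises the $\mathbf{L}$-dependence via
\[
\sum_{\mathbf{l}\in\{-m,\ldots,m\}^{k-1}} e^{\mathbf{l}\cdot\nabla_{\mathbf{p}_{0}}\mathcal{D}_{KL}(\mathbf{p}_{0}\|\mathbf{p}_{1})}\Pr[\mathbf{L}=\mathbf{l}] = \mathfrak{M}_L\left(\nabla_{\mathbf{p}_{0}}\mathcal{D}_{KL}(\mathbf{p}_{0}\|\mathbf{p}_{1})\right),
\]
which is finite and bounded below by $1$ since $\mathbf{L}$ has compact support and mean zero (Jensen). Taking $-n^{-1}\log$ of the resulting product and collecting terms of equal order in $n^{-1/2}$, $(\log n)/n$ and $n^{-1}$ reproduces equation \eqref{LD1}: the constants $c_{1,k}$ and $c_{2,k}$ aggregate the $\lambda_T(\alpha)-\lambda(\alpha)$ contribution and the $\mathbf{l}$-independent corrections of the Hoeffding sharp expansion; the $(k/4)(\log n)/n$ term is the unperturbed multinomial Bahadur--Rao prefactor; and the $-n^{-1}\log\mathfrak{M}_L$ term isolates the full $O(1/n)$ contribution of the truncated Laplace noise.

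The hard part is the sharp local large-deviation estimate for the multinomial $\mathbf{A}_n$ on the shrinking target $nE_n-\mathbf{l}$, uniformly over the $O(m^{k-1})$ shifts $\mathbf{l}$. The vector $\mathbf{A}_n$ is a lattice random variable in $\mathbb{Z}^{k-1}$, multidimensional and discrete, and the exponential family along the relevant tilt direction is not of product form; classical Cram\'er--Chernoff bounds yield only the exponential order, and one-dimensional Bahadur--Rao arguments do not transfer directly. One must combine an exponential tilt centred at $\mathbf{p}_{0}$, a Stone-type local CLT for the tilted lattice measure, and an explicit Edgeworth correction at the boundary of $E_n$ in order to integrate the tilted density over a set whose diameter vanishes at the CLT scale. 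This is exactly the novel sharp LDP for sums of i.i.d.\ random vectors promised in the abstract; its derivation is the technical core of the proof and is carried out in Appendix~\ref{Appa} for $k=2$ and Appendix~\ref{Appb} for general $k$.
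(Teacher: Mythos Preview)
Your proposal is correct and follows essentially the same route as the paper: reduce the acceptance region to a shrinking ellipsoid about $\mathbf p_0$ via the Taylor expansion of $\Lambda_{n,T}$, condition on $\mathbf L=\mathbf l$ by independence, apply a sharp (tilted) large-deviation estimate for the multinomial on the shifted ellipsoid to extract the factor $n^{-k/4}e^{-n\mathcal D_{KL}}e^{\mathbf l\cdot\nabla_{\mathbf p_0}\mathcal D_{KL}}$, and then sum over $\mathbf l$ to produce $\mathfrak M_L$. The only cosmetic difference is that the paper works directly with the asymptotic critical value $\lambda(\alpha)$ rather than expanding $\lambda_T(\alpha)-\lambda(\alpha)$, and for $k=2$ it uses the Cram\'er--Petrov series (Petrov, Ch.~VIII) together with the identity $\tfrac12 t^2-t^3\mathcal L(t)=\mathcal D_{KL}$ in exponential families in place of a Stone-type local CLT, but these are equivalent bookkeepings of the same tilting argument.
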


See Appendix \ref{Appa} and Appendix \ref{Appb} for the proof of Theorem \ref{teo33} with $k=2$ and $k>2$, respectively. The constants $c_{1,k,(\mathbf p_0, \mathbf p_{1})}$ and $c_{2,k,(\mathbf p_0, \mathbf p_{1})}$ in Theorem \ref{teo33} can be made explicit by gathering equations in the proof. The proof of \eqref{LD1} relies on a novel (sharp) large deviation principle for sum of i.i.d. random vectors, which is of independent interest. In particular, consider a $C^2$-regular compact and convex subset $\mathscr E \subset \R^d$, such that there exists a function $h \in C^2(\R^d)$ for which: i) $\mathscr E = \{\xb \in \R^d\ |\ h(\xb) \leq 0\}$; ii) $\partial\mathscr E = \{ \xb \in \R^d\ |\ h(\xb) = 0\}$ is a $C^2$-hypersurface; iii) $\nabla h$ does not vanish on $\partial\mathscr E$; iv) the hypersurface $\partial\mathscr E$ is oriented by the normal field $|\nabla h(\xb)|^{-1} \nabla h(\xb)$. Let $\{\mathbf X_i\}_{i \geq 1}$ be a sequence of i.i.d. $d$-dimensional random variables such that $\E[e^{\mathbf{z}\mathbf X_1}] < +\infty$ for all $\mathbf z$ with $|\mathbf z| < H$ for some $H >0$, and such that $\E[\mathbf X_1] = \mathbf 0$. Set $L(\mathbf z) := \log \E[e^{\mathbf z\mathbf X_1}]$. If, 
for any given a vector $\boldsymbol \xi \in \R^d$, there exists a unique $\hat{\mathbf z} \in \R^d$ with $|\hat{\mathbf z}| < H$ for which $\nabla L(\hat{\mathbf z}) = \boldsymbol \xi$, 
then it holds true that
\begin{align}\label{main_ld}
&\mathrm{Pr} \left[\frac{1}{\sqrt{n}} \sum_{i=1}^n \mathbf X_i \in \sqrt{n} \boldsymbol \xi + \mathscr E\right]\\
&\notag= \exp\left\{-n[\hat{\mathbf z} \nabla L(\hat{\mathbf z}) - L(\hat{\mathbf z})] + \sqrt{n} 
\min_{\mathbf v \in \mathscr E} \hat{\mathbf z} \mathbf v\right\} n^{-(d+1)/4}
[1 + o(1)],
\end{align}
as $n \rightarrow +\infty$, where $d$ denotes the dimension of the $\mathbf X_i$'s. See Appendix B for the proof of \eqref{main_ld}. Analogous large deviation principles, though not useful in our specific context, are in \cite{Ale(83)}, \cite{Osi(82)}, \cite{Sau(83)}, \cite{vBa(67)}. See also \citet{SS(91)} and references therein.

\begin{remark}
Under the Multinomial LR test, Theorem \ref{teo33} provides a refinement of \eqref{as_unp}. This is obtained from \eqref{LD1} by setting $\mathbf{L}=(0,\ldots,0)$, which implies
\begin{displaymath}
\log \mathfrak M_L\left(\nabla_{\mathbf p_0}\mathcal D_{KL}(\mathbf p_0\,\|\, \mathbf p_{1})\right)=0.
\end{displaymath}
The resulting expression is a novel Bahadur-Rao large deviation expansion of $n^{-1}\log(1-\beta_{n}(\mathbf p_{1};\alpha))$, which improves on the results of \citet{Hoe(65),Hoe(67)}.
\end{remark}

Theorem \ref{teo33} shows that, as $n$ increases, $-n^{-1}\log(1-\beta_{n,T}(\mathbf p_{1};\alpha))$ becomes close to $\mathcal D_{KL}(\mathbf p_{0}\,\|\, \mathbf p_{1})$, and, for a fixed $n\geq1$, the parameters $\varepsilon$ and $m$ interact with $k$ and $n$ in order to control such a closeness. Empirical analyses in \citet{Rin(17)} show that, for a fixed $n\geq1$ and $\alpha\in(0,1)$, the power $\beta_{n,T}(\mathbf p_{1};\alpha))$ decreases as $\varepsilon$ decreases and/or $m$ increases; such a behaviour agrees with intuition, as decreasing $\varepsilon$ and/or increasing $m$ leads to increase the perturbation in the data. Theorem \ref{teo33} provides a theoretical guarantee to the analyses in \citet{Rin(17)} by quantifying the contribution of the truncated Laplace EM, in terms of $\varepsilon$ and $m$, to the power of the ``true" LR test. According to \eqref{LD1}, the truncated Laplace EM  gives its most significant contribution in the term of order $n^{-1}$, whereas in the limit its contribution vanishes. That is,
\begin{displaymath}
\mathfrak L_{(\mathbf p_0, \mathbf p_{1})}(\ep,m) = \log \mathfrak M_L\left(\nabla_{\mathbf p_0}\mathcal D_{KL}(\mathbf p_0\,\|\, \mathbf p_{1})\right)
\end{displaymath}
determines the loss in the power of the ``true" LR test. Since $\mathfrak L_{(\mathbf p_0, \mathbf p_{1})}(\ep,m)$ increases as $\varepsilon$ decreases and/or $m$ increases, then the power of the  ``true" LR test decreases  under such a behavior of $\varepsilon$ and $m$. See Figure \ref{fig1} for an illustration of such a behaviour in the problem of testing a Uniform (U) $\mathbf{p}_{0}$, with $k=2$ and $k=4$, versus some alternatives for $\mathbf{p}_{1}$'s. Along the same lines of the proof of Theorem \ref{teo33}, it is easy to show that an analogous result holds true for the truncated Gaussian EM, with $\mathbf{L}$ being replaced by $\mathbf{G}=(G_{1},\ldots,G_{k-1})$ such that the $G_{i}$'s are random variables i.i.d. according to \eqref{gauss_n}. For fixed values of $\varepsilon$ and $\delta$, that is fixed levels of $(\varepsilon,\delta)$-DP, for the same tests considered in Figure \ref{fig1}, Table \ref{tab1} shows that the truncated Laplace EM produces a smaller decrease in the power than the truncated Gaussian EM.

\begin{figure}[!htb]
    \centering
    \includegraphics[width=0.95\linewidth]{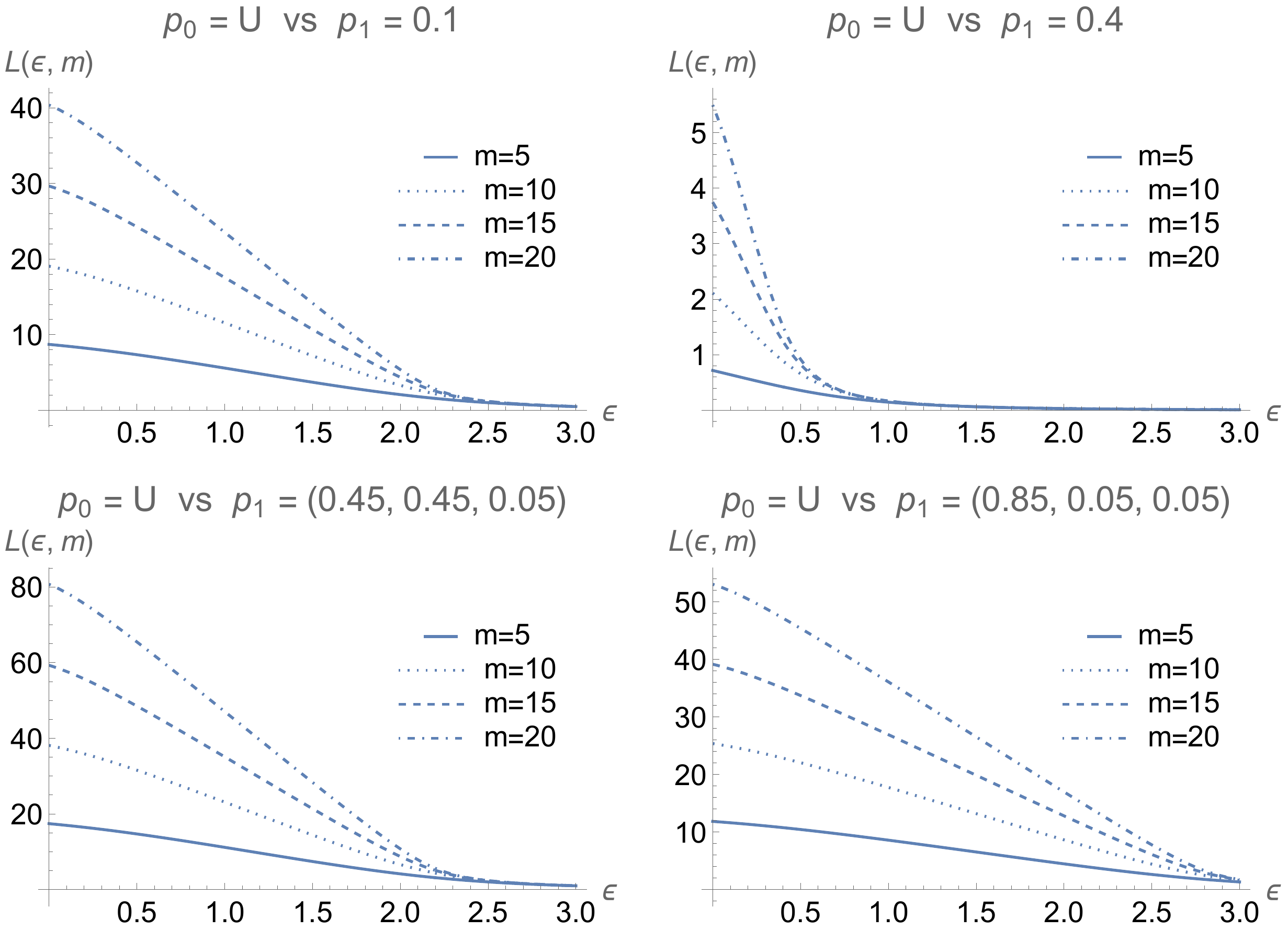}
    \caption{\small{Values of $\mathfrak L_{(\mathbf p_0, \mathbf p_{1})}(\ep,m)$ as a function of $\varepsilon\geq0$ for $m=5,\,10,\,15,\,20$: $H_{0}\text{ : }p=0.5$ vs $H_{1}\text{ : }p=0.1$ (top-left panel); $H_{0}\text{ : }p=0.5$ vs $H_{1}\text{ : }p=0.4$ (top-right panel); $H_{0}\text{ : }\mathbf{p}=(0.25,\,0,25,\,0.25)$ vs $H_{1}\text{ : }\mathbf{p}=(0.45,\,0.45,\,0.05)$ (bottom-left panel); $H_{0}\text{ : }\mathbf{p}=(0.25,\,0.25,\,0.25)$ vs $H_{1}\text{ : }\mathbf{p}=(0.85,\,0.05,\,0.05)$ (bottom-right panel).}}
    \label{fig1}
\end{figure}

\begin{remark}
Theorem \ref{teo33} shows that the contribution of the truncated Laplace EM to the power of the ``true" LR test vanishes in the limit $n\rightarrow+\infty$, leading to \eqref{lim_powe}. In particular, given a constant $C>0$, a value of $(\varepsilon,m)$ that solve
\begin{displaymath}
\mathfrak L_{(\mathbf p_0, \mathbf p_{1})}(\ep,m) = n C,
\end{displaymath}
say $(\varepsilon_{n},m_{n})$, provides the correct scaling for the parameters $\varepsilon$ and $m$, with respect to the sample size $n$, so that the contribution of the truncate Laplace EM to the power of the ``true" LR test does no longer vanish in the large $n$ limit.
\end{remark}

\begin{table}[!htb]
    \centering
    \resizebox{1\textwidth}{!}{\begin{tabular}{lrrrrrrrrrr}
        \toprule
        \multicolumn{1}{l}{} & \multicolumn{2}{c}{$m$}&\multicolumn{2}{c}{$\mathfrak L_{(\text{U}, 0.1)}(\ep,m) $} & \multicolumn{2}{c}{$\mathfrak L_{(\text{U},0.4)}(\ep,m) $} & \multicolumn{2}{c}{$\mathfrak L_{(\text{U},(0.45,\,0.45,\,0.05)}(\ep,m) $} & \multicolumn{2}{c}{$\mathfrak L_{(\text{U},(0.85,\,0.05,\,0.05))}(\ep,m) $} \\
        \cmidrule(r){2-3} \cmidrule(r){4-5} \cmidrule(r) {6-7} \cmidrule(r){8-9} \cmidrule(r){10-11}
        $(\varepsilon,\,\delta)$ & L & G & L & G & L & G& L & G & L & G \\[0.05cm]
        \midrule
            (0.025,\,0.09) & 5&   5& 8.652&   8.674 &  0.698 &   0.707 &  17.303 & 17.349  &   11.773 &   11.796  \\[0.1cm]
             (0.025,\,0.04) & 10&   10& 18.927 &   18.972 &  2.037 &   2.069 &  37.854 &  37.944  &   25.228 &   25.274  \\[0.1cm]
            (0.05,\,0.08) & 5&   5& 8.596 &  8.642 &   0.6789 &   0.697&   17.191 &   17.284  &  11.715 &  11.762 \\[0.1cm]
             (0.05,\,0.04) & 10&   10& 18.802 &  18.899 &  1.962 &  2.029&   37.605 &   37.795  &  25.101 &  25.197 \\[0.1cm]
            (0.075,\,0.08) &5&   5& 8.538 &  8.610 &   0.660 &  0.687 &   17.076 &   17.219  &  11.656 & 11.729  \\[0.1cm]
              (0.075,\,0.03) & 10&   11& 18.672 &  20.902 &   1.885 &  2.281 &   37.345 &   41.803  &  24.970 & 27.836  \\[0.1cm]
           (0.1,\,0.07) & 5&   6& 8.479 &  10.573 &  0.641 & 0.903 &   16.958 &   21.147  &  11.595 & 14.327  \\[0.1cm]
             (0.1,\,0.03) & 10&   12& 18.536 &  22.893 &  1.807 &  2.525 &   37.073 &   45.786  &  24.833 & 30.462  \\[0.1cm]
           (0.125,\,0.07) & 5&   6& 8.419 &  10.531 &   0.622 &  0.888 &   16.837 &   21.063  &  11.533 & 14.283  \\[0.1cm]
            (0.125,\,0.02) & 10&   12& 18.396 &  22.795 &   1.727 &  2.470 &   36.792 &   45.591  &  24.690 & 30.362  \\[0.1cm]
           (0.150,\,0.06) & 5&   6& 8.357 &  10.489 &  0.603 &  0.874 &   16.713 &   20.978  &  11.469 & 14.240  \\[0.1cm]
            (0.150,\,0.02) & 10&   12& 18.250 &  22.696 &  1.646 &  2.414 &   36.499 &   45.391  &  24.542 & 30.261  \\[0.1cm]
           (0.175,\,0.06) & 5&   6& 8.293 &  10.446 &   0.584 &  0.859 &   16.587 &   20.892  &  11.404 & 14.195  \\[0.1cm]
           (0.175,\,0.02)& 10&   12 & 18.098 &  22.595 &   1.565 &  2.358 &   36.197 &   45.189  &  24.389 & 30.158  \\[0.1cm]
           (0.2,\,0.05) & 5&   6& 8.228 &  10.403 &  0.565 &  0.845 &   16.457 &   20.801  &  11.337 & 14.150  \\[0.1cm]
            (0.2,\,0.02) & 10&   13& 17.942 &  24.536 &  1.483 &  2.566 &   35.884 &   49.072  &  24.231 & 32.734  \\[0.1cm]
          \bottomrule
    \end{tabular}}
    \caption{\small{Values of $\mathfrak L_{(\mathbf p_0, \mathbf p_{1})}(\ep,m)$ for fixed levels of $(\varepsilon,\delta)$-DP under the truncated Laplace EM (L) and the truncated Gaussian EM (G): $H_{0}\text{ : }p=0.5$ vs $H_{1}\text{ : }p=0.1$ (third column); $H_{0}\text{ : }p=0.5$ vs $H_{1}\text{ : }p=0.4$ (fourth column); $H_{0}\text{ : }\mathbf{p}=(0.25,\,0,25,\,0.25)$ vs $H_{1}\text{ : }\mathbf{p}=(0.45,\,0.45,\,0.05)$ (fifth column); $H_{0}\text{ : }\mathbf{p}=(0.25,\,0.25,\,0.25)$ vs $H_{1}\text{ : }\mathbf{p}=(0.85,\,0.05,\,0.05)$ (sixth column).}}
    \label{tab1}
\end{table}

Theorem \ref{teo33} allows us to define the loss of power of the ``true" LR test, which admits  a natural formulation as the additional sample size that is required in the ``true" LR test in order to recover the power of the Multinomial LR test. According to Theorem \ref{teo33}, for any fixed value $\beta > 0$ of the power, there exist: i) an integer $\bar{n}_{(\mathbf p_{0}, \mathbf p_{1})}(\alpha, \beta)$ such that, for $n\geq \bar{n}_{(\mathbf p_{0}, \mathbf p_{1})}(\alpha, \beta)$, the LR test in the absence of perturbation has eventually power not less than $\beta$; ii) an integer $\bar{n}_{(\mathbf p_{0}, \mathbf p_{1}),T}(\alpha, \beta)$ such that, for $n\geq \bar{n}_{(\mathbf p_{0}, \mathbf p_{1}),T}(\alpha, \beta)$, the ``true"  LR test has eventually power not less than $\beta$. That is, $\bar{n}_{(\mathbf p_{0}, \mathbf p_{1})}(\alpha, \beta)$ and $\bar{n}_{(\mathbf p_{0}, \mathbf p_{1}),T}(\alpha, \beta)$ provide lower bounds for the sample size $n$ required to obtain a power (at least equal to) $\beta$. Accordingly, for any $\mathbf p_{1} \in \Delta_{k-1}$ such that $\mathbf p_{1} \neq \mathbf p_{0}$, we can make use of the large $n$ asymptotic expansions of $\beta_{n}(\mathbf p_{1},\alpha)$ and $\beta_{n, T}(\mathbf p_{1},\alpha)$ in Theorem \ref{teo33}  to quantify $\bar{n}_{(\mathbf p_{0}, \mathbf p_{1})}(\alpha, \beta)$ and $\bar{n}_{(\mathbf p_{0}, \mathbf p_{1}),T}(\alpha, \beta)$ and, above all, the gap between these lower bounds. In particular, from \eqref{LD1} we can write
\begin{displaymath}
\bar{n}_{(\mathbf p_{0}, \mathbf p_{1}),T}(\alpha, \beta) \simeq \bar{n}_{(\mathbf p_{0}, \mathbf p_{1})}(\alpha, \beta) + \mathcal{R}_{(\mathbf p_{1}, \mathbf p_{0})}(\varepsilon, m)
\end{displaymath}
with
\begin{displaymath}
\mathcal{R}_{(\mathbf p_{1}, \mathbf p_{0})}(\varepsilon, m) = \bar{n}_{(\mathbf p_{0}, \mathbf p_{1})}(\alpha, \beta) \frac{\mathfrak L_{(\mathbf p_0, \mathbf p_{1})}(\ep,m)}{\mathcal D_{KL}(\mathbf p_0\,\|\, \mathbf p_{1})}, 
\end{displaymath}
where $\simeq$ means that $\bar{n}_{(\mathbf p_{0}, \mathbf p_{1}),T}(\alpha, \beta)$ and $\bar{n}_{(\mathbf p_{0}, \mathbf p_{1})}(\alpha, \beta)$ are obtained after dropping the $o(n^{-1})$-term on the right-hand side of \eqref{LD1}. Therefore, the gap $\mathcal{R}_{(\mathbf p_{1}, \mathbf p_{0})}(\varepsilon, m)$ between $\bar{n}_{(\mathbf p_{0}, \mathbf p_{1}),T}(\alpha, \beta)$ and $ \bar{n}_{(\mathbf p_{0}, \mathbf p_{1})}(\alpha, \beta) $ takes on the interpretation of the (sample) cost of $(\varepsilon,\delta)$-DP under the ``true" LR test. That is, the additional sample size required to recover the power of the Multinomial LR test.

We conclude with a brief discussion on the effect of the dimension $k>1$ on the power of the ``true"  LR test. Theorem \ref{teo33} shows how the sample size $n$ interacts with the truncated Laplace EM. However, according to Theorem \ref{teo33}, also $k$ interacts with the truncated Laplace EM, still in terms of the parameters $\varepsilon$ and $m$, thus having a direct effect on the power of the ``true" LR test. By maintaining the sample size $n$ large enough, we assume that $k$ increases moderately. Differently from the sample size $n$, any increase of the dimension $k$ determines a corresponding change of $\mathbf p_{0}$ and $\mathbf p_{1}$, as these probabilities are elements of $\Delta_{k-1}$. Therefore, instead of the sole $k$, it is critical to consider a suitable function of $k$, $\mathbf p_{0}$ and $\mathbf p_{1}$, say $\nu_{k,(\mathbf p_{0}, \mathbf p_{1})}$, and then show how such a function interacts with the truncated Laplace EM. From \eqref{LD1}, it is clear that the interaction between $\nu_{k,(\mathbf p_{0}, \mathbf p_{1})}$ and the parameters $\varepsilon$ and $m$ is determined by $-n^{-1}\mathfrak L_{(\mathbf p_0, \mathbf p_{1})}(\ep,m)$. The behaviour of such a term depends on the specific form of the moment generating function of the Laplace distribution \eqref{Laplace}, which turns out to be a convex, even function with a global minimum at $0$. The larger $m$ the flatter the $U$-shape of this function around its minimum. Since $\mathfrak L_{(\mathbf p_0, \mathbf p_{1})}(\ep,m)$ is the sum of $k$ logarithmic terms, it is critical to count the number of terms in the sum that are not of the form $\log(1+\omega)$, for some small $\omega$. That is, the number of ``large-scale terms'':
\begin{displaymath}
\nu_{k,(\mathbf p_{0}, \mathbf p_{1})}(\varepsilon, \eta) := \#\left\{i = 1, \dots, k\ :\ \left|\frac{\partial}{\partial p_{0,i}} \mathcal D_{KL}(\mathbf p_{0}\,\|\, \mathbf p_{1})\right| \geq \varepsilon(1+\eta) \right\}.
\end{displaymath}
By a suitable tuning of $\eta$, the quantity $\nu_{k,(\mathbf p_{0}, \mathbf p_{1})}(\varepsilon, \eta)$ gives the number of large-scale terms. The next proposition shows how $\nu_{k,(\mathbf p_{0}, \mathbf p_{1})}(\varepsilon, \eta)$ interacts with the truncated Laplace EM, and its impact on the power of the ``true"  LR test.

\begin{proposition} \label{prop:large_k}
Under the assumptions of Theorem \ref{teo33}, for a given $\eta > 0$, it holds that
\begin{itemize}
\item[i)] if $\nu_{k,(\mathbf p_{0}, \mathbf p_{1})}(\varepsilon, \eta) > 0$ then the contribution of $\mathfrak L_{(\mathbf p_0, \mathbf p_{1})}(\ep,m)$ to the power of the ``true"  LR test decreases at least proportionally to the quantity
\begin{displaymath}
\nu_{k,(\mathbf p_{0}, \mathbf p_{1})}(\varepsilon, \eta) \log \E[\exp\{L_1 \varepsilon(1+\eta)\}];
\end{displaymath}
\item[ii)] if $\nu_{k,(\mathbf p_{0}, \mathbf p_{1})}(\varepsilon, \eta) = 0$ and $\varepsilon^2(1+\eta)^2\mathsf{Var}[L] < 1$ then the contribution of $\mathfrak L_{(\mathbf p_0, \mathbf p_{1})}(\ep,m)$ to the power of the ``true"  LR decreases proportionally to 
\begin{displaymath}
k \varepsilon^2(1+\eta)^2\mathsf{Var}[L].
\end{displaymath}
\end{itemize}
\end{proposition}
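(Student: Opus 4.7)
The plan is to exploit the product form of $\mathfrak M_L$ stemming from the i.i.d.\ structure of $\mathbf L=(L_1,\dots,L_{k-1})$, and to analyze the resulting one-dimensional log-moment generating function in the two regimes delimited by the threshold $\varepsilon(1+\eta)$. Let $M_L(z):=\E[\exp(z L_1)]$ and $\psi_L(z):=\log M_L(z)$, and write $z_i^*:=\partial_{p_{0,i}}\mathcal D_{KL}(\mathbf p_0\,\|\,\mathbf p_1)$. The truncated Laplace law \eqref{Laplace} is symmetric on $\{-m,\dots,m\}$, so $\E[L_1]=0$, and $\psi_L$ is even, strictly convex and nonnegative with unique minimum $\psi_L(0)=0$, hence strictly increasing on $[0,+\infty)$. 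The starting identity is
\begin{equation*}
\mathfrak L_{(\mathbf p_0,\mathbf p_1)}(\varepsilon,m)\;=\;\sum_{i}\psi_L(z_i^*),
\end{equation*}
whose size Theorem \ref{teo33} translates, via the sign in \eqref{LD1}, directly into the contribution to the drop of $\beta_{n,T}(\mathbf p_1;\alpha)$.

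For part (i), I would drop from this sum the terms with $|z_i^*|<\varepsilon(1+\eta)$, which are all nonnegative, and bound each of the remaining (at least $\nu_{k,(\mathbf p_0,\mathbf p_1)}(\varepsilon,\eta)$) terms below by $\psi_L(\varepsilon(1+\eta))$, using evenness and monotonicity of $\psi_L$ on $[0,+\infty)$. This yields
\begin{equation*}
\mathfrak L_{(\mathbf p_0,\mathbf p_1)}(\varepsilon,m)\;\geq\;\nu_{k,(\mathbf p_0,\mathbf p_1)}(\varepsilon,\eta)\,\log\E[\exp\{L_1\,\varepsilon(1+\eta)\}],
\end{equation*}
which is the announced at-least-proportional loss of power. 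For part (ii), every $|z_i^*|$ lies strictly below $\varepsilon(1+\eta)$, and I would Taylor-expand $\psi_L$ at $0$; using $\psi_L'(0)=\E[L_1]=0$ and $\psi_L''(0)=\mathsf{Var}[L_1]$,
\begin{equation*}
\psi_L(z)\;=\;\tfrac12\,\mathsf{Var}[L_1]\,z^2\;+\;r(z),
\end{equation*}
with $r$ controlled by the higher even cumulants of $L_1$. Summing over the $k-1$ coordinates and using $z_i^{*2}<\varepsilon^2(1+\eta)^2$ gives
\begin{equation*}
\mathfrak L_{(\mathbf p_0,\mathbf p_1)}(\varepsilon,m)\;\leq\;\tfrac12\,(k-1)\,\varepsilon^2(1+\eta)^2\,\mathsf{Var}[L_1]\,\bigl(1+o(1)\bigr),
\end{equation*}
which through Theorem \ref{teo33} produces a decrease in the power proportional to $k\,\varepsilon^2(1+\eta)^2\,\mathsf{Var}[L_1]$.

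The main obstacle will be the quantitative and uniform control of $r(z)$ for $|z|<\varepsilon(1+\eta)$. Since $L_1$ is supported on $\{-m,\dots,m\}$, all its moments are finite and satisfy $\E[L_1^{2j}]\leq m^{2(j-1)}\,\mathsf{Var}[L_1]$, so the tail of the series defining $M_L(z)$ is dominated termwise by $\cosh(|z|m)-1-\tfrac12z^2\mathsf{Var}[L_1]$ and $\log(1+\cdot)$ can be expanded safely under the standing hypothesis $\varepsilon^2(1+\eta)^2\mathsf{Var}[L_1]<1$, which guarantees $|M_L(z)-1|$ stays bounded away from $1$ and hence $r(z)=O(z^4\mathsf{Var}[L_1]^2)$ uniformly. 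A secondary bookkeeping point, namely that $\nu_{k,(\mathbf p_0,\mathbf p_1)}$ ranges over $i=1,\dots,k$ whereas the gradient has only $k-1$ independent coordinates, is handled by viewing $\mathcal D_{KL}$ as a function on the full simplex in $\R^k$ with the constraint $|\mathbf p_0|=1$, so that the ``missing'' partial derivative is redistributed into the existing sum up to factors that are absorbed in the proportionality constants of the conclusion.
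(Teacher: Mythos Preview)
Your proposal is correct and follows essentially the same route as the paper: decompose $\mathfrak L_{(\mathbf p_0,\mathbf p_1)}(\varepsilon,m)$ as a sum of one-dimensional log-mgf's $\psi_L(z_i^*)$, then for (i) discard the small-scale terms and bound the remaining ones below by monotonicity of $\psi_L$ on $[0,\infty)$, and for (ii) use the second-order Taylor expansion $\psi_L(z)=\tfrac12\mathsf{Var}[L_1]z^2+o(z^2)$. The only stylistic difference is that the paper inserts an explicit calculation $\E[e^{zL_1}]=\frac{1}{c_{\varepsilon,m}}\sum_{l=0}^m e^{(z-\varepsilon)l}+R_{\varepsilon,m}(z)$ to motivate why $\varepsilon(1+\eta)$ is the natural threshold, whereas you appeal directly to convexity and evenness of $\psi_L$; your quantitative control of the remainder $r(z)$ and your remark on the $k$ versus $k-1$ indexing are in fact more careful than what the paper records.
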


See Appendix \ref{Appb} for the proof of Proposition \ref{prop:large_k}. According to Proposition \ref{prop:large_k}, the choice of the parameter $\eta >0$ determines a ``large scale'' and a ``small scale'' for the terms that are involved in the sum defined through $\mathfrak L_{(\mathbf p_0, \mathbf p_{1})}(\ep,m)$. Under the large scale behaviour of $\mathfrak L_{(\mathbf p_0, \mathbf p_{1})}(\ep,m)$, i.e. $\nu_{k,(\mathbf p_{0}, \mathbf p_{1})}(\varepsilon, \eta)>0$, Proposition \ref{prop:large_k} shows that there exists an interaction between $\nu_{k,(\mathbf p_{0}, \mathbf p_{1})}(\varepsilon, \eta)$ and the parameters $\varepsilon$ and $m$, and that such an interaction is with respect to $\log \E[\exp\{L_1 \varepsilon(1+\eta)\}]$. Under the small scale behaviour of $\mathfrak L_{(\mathbf p_0, \mathbf p_{1})}(\ep,m)$, i.e. $\nu_{k,(\mathbf p_{0}, \mathbf p_{1})}(\varepsilon, \eta)=0$, Proposition \ref{prop:large_k} shows that the exists an interaction between $k$ and the parameters $\varepsilon$ and $m$, and that such an interaction is with respect to $\mathsf{Var}[L]$. The small scale behaviour thus leads to an interaction directly in terms of the dimension $k$, thus showing clearly the effect of $k$ on the power of the ``true"  LR test. In particular, under the small scale behaviour, the contribution of $\mathfrak L_{(\mathbf p_0, \mathbf p_{1})}(\ep,m)$ to the power of the ``true"  LR test decreases proportionally to $k \varepsilon^2(1+\eta)^2\mathsf{Var}[L]$. That is, the smaller  $k$ and the smaller $\mathsf{Var}[L]$, i.e. the larger $\varepsilon$ and/or the smaller $m$, the smaller is the contribution of $-n^{-1} \mathfrak L_{(\mathbf p_0, \mathbf p_{1})}(\ep,m)$ to the power of the ``true"  LR test.


\section{A ``na\"ive" LR test for goodness-of-fit under $(\varepsilon,\delta)$-DP} \label{sec4}

As in Section \ref{sec3}, we assume that $\mathbf{a}=(a_{1},\ldots,a_{k})$ is perturbed by means of the truncated Laplace EM, in such a way that the resulting perturbed list $\mathbf{b}$ has the same sample size $n$ as the original list $\mathbf{a}$. Then, to avoid the release of negative values, we assume that $\mathbf{b}$ is post-processed in such a way that the negative values of $\mathbf{b}$ are replaced by zeros \citep[Section 6]{Rin(17)}. That is, we define $\mathbf{b}^{+}=(b_{1}^{+},\ldots,b_{k}^{+})$, where $b_{i}^{+}=\max\{0,\,b_{i}\}\geq0$ for $i=1,\ldots,k-1$ and $b_{k,n}=\max\{0,\,n-\sum_{1\leq i\leq k-1}b_{i}^{+}\}$. Note that the sample size $|\mathbf{b}^{+}|$ of $\mathbf{b}^{+}$ may be different from the sample size $n$ of $\mathbf{a}$. Assuming that $\mathbf{b}$ is modeled as the ``true" model described in Section \ref{sec3}, i.e. $\mathbf{b}$ is the realization of the random variable $\mathbf{B}=(B_{1,n},\ldots,B_{k-1,n})$ and $B_{k,n}=n-|\mathbf{B}_{n}|$, it is natural to model $\mathbf{b}^{+}$ as the realization of the random variable $\mathbf{B}^{+}=(B_{1,n}^{+},\ldots,B_{k,n}^{+})$ with $B_{i,n}=\max\{0,\,B_{i,n}\}\geq0$ for $i=1,\ldots,k-1$ and $B_{k,n}^{+}=\max\{0,n-\sum_{1\leq i\leq k-1}B_{i,n}^{+}\}$. Therefore, differently from $\mathbf{B}$, the sample size $|\mathbf{B}^{+}|$ of $\mathbf{B}^{+}$ is random. Following the work of \citet{Rin(17)}, we consider a ``na\"ive" model for the perturbed list $\mathbf{b}^{+}$, that is a statistical model that does not takes into account the truncated Laplace EM. The ``na\"ive" model assumes that $\mathbf{b}^{+}$ is modeled as the distribution of the random variable $\mathbf{B}^{+}=(B_{1,n}^{+},\ldots,B_{k,n}^{+})$ such that the conditional distribution of $\mathbf{B}^{+}$ given $|\mathbf{B^{+}}|=n^{+}$ is a Multinomial distribution with parameter $(\mathbf{p},n^{+})$. Under the ``na\"ive" model for $\mathbf{b}^{+}$, we make use of the LR test to assess goodness-of-fit in the form: $H_{0}\text{ : }\mathbf{p}=\mathbf{p}_{0}$, for a fixed $\mathbf{p}_{0}\in\Delta_{k-1}$, against $H_{1}\text{ : }\mathbf{p}\neq\mathbf{p}_{0}$. As in Section \ref{sec3}, without loss of generality, we assume that $\mathbf{p}_{0}$ belongs to the interior of $\Delta_{k-1}$.

\begin{remark}
A straightforward application of classical concentration inequalities for the Multinomial distribution entails that, for any fixed $n\geq1$, the probability of the event $\{\mathbf{B}^{+} \neq \mathbf{B}\}$ can be bounded by an exponential term of the form $e^{-\tau n}$, for a suitable choice of $\tau > 0$. This result shows that the additional randomness carried by the sample size $|\mathbf{B}^{+}|$ does not contribute in any significant way to the Edgeworth expansions of the distribution of the ``na\"ive" LR  in Theorem \ref{teo31} and Theorem \ref{teo32n} below, where only terms of $O(n^{-1/2})$ and $O(n^{-1})$-type are relevant. We refer to Remark \ref{rmk:pezza} for further details. 
\end{remark}

Under the ``na\"ive" model for the post-processed list $\mathbf{b}^{+}$, the likelihood function is
\begin{equation}\label{naiv_mod}
L_{n^{+},N}(\mathbf{p};\mathbf{b}^{+})\propto\left(\prod_{i=1}^{k-1}\frac{p_{i}^{b_{i}^{+}}}{b_{i}^{+}!}\right)\left(1-|\mathbf{p}|\right)^{b^{+}_{k}}.
\end{equation}

Now, for the sake of simplicity, we start developing an asymptotic analysis under the assumption that $k=2$. That is, $A_{n}$ has a Binomial distribution with parameter $(n,p)$, $L$ is independent of $A_{n}$ and distributed according to \eqref{Laplace}, and $B_{n}=A_{n}+L$. Under the ``na\"ive" model with $k=2$, we make use of the LR test to assess goodness-of-fit in the form: $H_{0}\text{ : }p=p_{0}$, for a fixed $p_{0}\in(0,1)$, against $H_{1}\text{ : }p\neq p_{0}$. Recall that the LR under the ``true" model  is given by \eqref{LR_multidim} with $k=2$. We denote by $\hat{p}_{n^{+},N}$ the maximum likelihood estimator of $p$ under the ``na\"ive" model for $\mathbf{b}^{+}$, that is the estimator obtained through the maximization of the likelihood function \eqref{naiv_mod} with $k=2$. Then, we define
\begin{displaymath}
\Lambda_{n^{+},N}(p_{0})=2\log\left(\frac{L_{n^{+},N}(\hat{p}_{n^{+},N};B_{n})}{L_{n^{+},N}(p_{0};B_{n})}\right),
\end{displaymath}
which is referred to as the ``na\"ive" LR, and we denote by $\text{Pr}[\Lambda_{n^{+},N}(p_{0})\in\cdot;\,p_{0}]$ the distribution of $\Lambda_{n^{+},N}(p_{0})$ computed with respect to $A_{n}$ distributed as a Binomial distribution with parameter $(n,p_{0})$. The next theorem establishes an Edgeworth expansion, with respect to $n\rightarrow+\infty$, for the distribution of $\Lambda_{n^{+},N}(\mathbf{p}_{0})$.  To emphasize the comparison with respect to the ``true" model, we also include the Edgeworth expansion of the distribution of $\Lambda_{n,T}(p_{0})$, which follows from Theorem \ref{teo32} with $k=2$. We denote by $\K$ the the cumulative distribution function of a chi-squared distribution with $1$ degree of freedom.

\begin{theorem}\label{teo31}
Let $\F_{n,T}(t)=\mathrm{Pr}[\Lambda_{n,T}(p_{0})\leq t;\,p_{0}]$ be the cumulative distribution function of $\Lambda_{n,T}(p_{0})$, and let $\F_{n^{+},N}(t)=\mathrm{Pr}[\Lambda_{n^{+},N}(p_{0})\leq t;\,p_{0}]$ be the cumulative distribution function of $\Lambda_{n^{+},N}(p_{0})$. For any $n\in\mathbb{N}$ and any $t>0$
\begin{equation}\label{exp_t}
\F_{n,T}(t)= \K(t)+\frac{c_{1,p_{0}}(t)}{\sqrt{n}}+\frac{c_{\ast,p_{0}}(t;m,\varepsilon)}{e^{t/2}\sqrt{2\pi n}} +\frac{c_{2,p_{0}}(t)}{n} + R_{n,p_{0},T}(t)
\end{equation}
with 
\begin{displaymath}
|R_{n,p_{0},T}(t)| \leq \frac{C_{p_{0},N}(t;m, \varepsilon)}{n^{3/2}},
\end{displaymath}
and
\begin{equation}\label{exp_n}
\F_{n^{+},N}(t)= \K(t)+\frac{c_{1,p_{0}}(t)}{\sqrt{n}}+\frac{c_{2,p_{0}}(t)}{n}-\left(\frac{t\text{e}^{-t}}{2\pi}\right)^{1/2}\frac{\mathsf{Var}[L]}{np_{0}(1-p_{0})} + R_{n,p_{0},N}(t),
\end{equation}
with
\begin{displaymath}
|R_{n,p_{0},N}(t)| \leq \frac{C_{p_{0},N}(t; m, \varepsilon)}{n^{3/2}},
\end{displaymath}
where: the functions $c_{1,p_{0}}$ and $c_{2}$ are independent of $n$ and of the distribution of $L$, that is independent of $\varepsilon$ and $m$; the function $c_{\ast,p_{0}}$ is independent of $n$ and such that $|c_{\ast,p_{0}}|\leq 1$; the functions $C_{p_{0},T}$ and $C_{p_{0},N}$ are independent of $n$.
\end{theorem}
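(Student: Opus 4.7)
The expansion \eqref{exp_t} is Theorem \ref{teo32} specialized to $k=2$, so the substantive task is \eqref{exp_n}. The plan has three steps. Step 1: reduce to the event $\{B_n^+ = B_n\}$ on which the post-processing map acts as the identity. Step 2: on that event, recognize that the na\"ive LR coincides with the classical Binomial log-likelihood-ratio functional evaluated at the \emph{noisy} count $B_n = A_n + L$ rather than at the Binomial $A_n$. Step 3: condition on $L$, apply a standard Binomial Edgeworth expansion with a deterministic shift, and then integrate over $L$ to extract the $\mathsf{Var}[L]$ correction at order $1/n$.

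For Step 1, $\{B_n^+ \neq B_n\} \subseteq \{A_n + L < 0\}\cup\{A_n + L > n\}$, and since $|L|\leq m<n$ and $p_0\in(0,1)$ is interior, a Chernoff bound for $A_n \sim \mathrm{Bin}(n,p_0)$ yields $\mathrm{Pr}[B_n^+\neq B_n]\leq e^{-\tau n}$ for some $\tau=\tau(p_0,m)>0$; this is absorbed in the $O(n^{-3/2})$ remainder, in line with Remark \ref{rmk:pezza}. For Step 2, on $\{B_n^+=B_n\}$ we have $n^+=n$ and $\hat p_{n^+,N}=B_n/n$, so \eqref{naiv_mod} gives
\begin{displaymath}
\Lambda_{n^+,N}(p_0) = 2\left[B_n\log\frac{B_n}{np_0} + (n-B_n)\log\frac{n-B_n}{n(1-p_0)}\right],
\end{displaymath}
i.e.\ the classical Binomial LR functional evaluated at $B_n = A_n + L$.

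For Step 3, set $\hat Z_n = (A_n - np_0)/\sqrt{np_0(1-p_0)}$ and $a_\ell = \ell/\sqrt{np_0(1-p_0)}$. A second-order Taylor expansion of $x\log x$ at $x=np_0$ writes $\Lambda_{n^+,N}(p_0) = (\hat Z_n + a_\ell)^2 + n^{-1/2}\rho_n(\hat Z_n,a_\ell)$ conditionally on $L=\ell$, which is the usual chi-squared-limit reduction. The classical lattice Edgeworth expansion for the Binomial LR then gives, uniformly in $|\ell|\leq m$,
\begin{displaymath}
\mathrm{Pr}[\Lambda_{n^+,N}(p_0)\leq t\mid L=\ell] = \Phi(\sqrt{t}-a_\ell)-\Phi(-\sqrt{t}-a_\ell) + \frac{c_{1,p_0}(t)}{\sqrt n}+\frac{c_{2,p_0}(t)}{n}+O(n^{-3/2}),
\end{displaymath}
and the coefficients $c_{1,p_0},c_{2,p_0}$ coincide with those for the pure Binomial LR because any $\ell$-dependence enters only through $a_\ell = O(n^{-1/2})$ and thus contributes at order $n^{-3/2}$. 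Using $\phi$ even and $\phi'(x)=-x\phi(x)$, a Taylor expansion in $a_\ell$ kills the linear term and yields $\Phi(\sqrt{t}-a_\ell)-\Phi(-\sqrt{t}-a_\ell)=\K(t)-a_\ell^2\,\sqrt{t}\,\phi(\sqrt{t})+O(a_\ell^4)$. Integrating over $L$ with $\E[L]=0$ and $\E[L^2]=\mathsf{Var}[L]$ replaces $a_\ell^2$ by $\mathsf{Var}[L]/(np_0(1-p_0))$ and produces precisely the correction $-\sqrt{t\,e^{-t}/(2\pi)}\cdot\mathsf{Var}[L]/(np_0(1-p_0))$, which is \eqref{exp_n}.

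The main technical obstacle is establishing the conditional Edgeworth expansion sufficiently uniformly in the shift $a_\ell$, with constants depending on $m$ (through $|L|\leq m$) in a controlled way so that the remainder remains $O(n^{-3/2})$; and verifying that \emph{no} extra oscillatory $n^{-1/2}$ term of the form $c_{*,p_0}(t;m,\varepsilon)/(e^{t/2}\sqrt{2\pi n})$ appears in \eqref{exp_n}. This structural difference from \eqref{exp_t} is due to $\Lambda_{n^+,N}(p_0)$ being a smooth functional of a single Binomial-type lattice variable, whereas the ``true'' LR \eqref{LR_multidim} is built from the convolutional likelihood \eqref{true_mod}, which encodes the Laplace lattice through $m$ and $\varepsilon$ and therefore gives rise to an additional lattice-oscillation term at order $n^{-1/2}$.
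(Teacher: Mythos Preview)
Your approach for the na\"ive expansion is essentially the paper's, just organized differently: you condition on $L=\ell$, use the Binomial Edgeworth expansion for the shifted count $A_n+\ell$, and integrate over $L$. The paper instead first derives an Edgeworth expansion for $\G_n$, the distribution function of $Z_n=(B_n-np_0)/\sqrt{n}$ (Proposition~\ref{prp:BE}, obtained precisely by conditioning on $L$ and exploiting that $S_j(np_0+\sqrt{n}x-\ell)=S_j(np_0+\sqrt{n}x)$ since $\ell\in\mathbb Z$), and then evaluates $\G_n$ at the rejection-region endpoints $\mathfrak{V}_{n,\pm}$ for the na\"ive model and $\mathfrak{Z}_{n,\pm}$ for the true model. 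Your Taylor expansion in $a_\ell$ of $\Phi(\sqrt t - a_\ell)-\Phi(-\sqrt t - a_\ell)$ correctly reproduces the $\mathsf{Var}[L]$ correction.

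The gap is in your final paragraph. Your explanation of why the true model carries $c_{*,p_0}(\cdot;m,\varepsilon)$ but the na\"ive one does not is incorrect: it is \emph{not} that the true LR ``encodes the Laplace lattice'' while the na\"ive LR is a smooth functional of a single lattice variable. Both $\Lambda_{n,T}$ and $\Lambda_{n^+,N}$ are functionals of the \emph{same} $B_n$, with the same lattice structure, and the $S_1$ oscillation at order $n^{-1/2}$ is present in both cases (your conditional Edgeworth formula omits it; it survives integration over $L$ unchanged, by periodicity). The actual mechanism is this: the true rejection endpoints $\mathfrak{Z}_{n,\pm}$, obtained by inverting \eqref{final_expansion_LambdanT}, carry a $\mathsf{Var}(L)/n$ correction coming from the expansion of the convolutional likelihood, and this correction \emph{exactly cancels} the $\mathsf{Var}(L)\,\Phi''/(2np_0(1-p_0))$ term in the Edgeworth expansion of $\G_n$, via the identity $x\Phi'(x)+\Phi''(x)=0$. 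The na\"ive endpoints $\mathfrak{V}_{n,\pm}$ have no $\mathsf{Var}(L)$ correction, so the $\Phi''$ term from $\G_n$ survives and yields precisely $-\sqrt{t e^{-t}/(2\pi)}\cdot\mathsf{Var}[L]/(np_0(1-p_0))$ in \eqref{exp_n}. The $(\varepsilon,m)$-dependence of $c_{*}$ in \eqref{exp_t} then enters only because $S_1$ is evaluated at the $(\varepsilon,m)$-dependent arguments $np_0+\sqrt n\,\mathfrak{Z}_{n,\pm}$, not because of an additional lattice.
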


See Appendix \ref{Appa} for the proof of Theorem \ref{teo31}. The functions $c_{1,p_{0}}$, $c_{\ast,p_{0}}$ and $c_{2,p_{0}}$ in Theorem \ref{teo31}, as well as the functions $C_{p_{0},T}$ and $C_{p_{0},N}$, can be made explicit by gathering some equations in the proof of the theorem. To assess goodness-of-fit by means of $\Lambda_{n,T}(p_{0})$ and $\Lambda_{n^{+},N}(p_{0})$, we fix a (reference) level of significance $\alpha\in(0,1)$ and find $\lambda_{T}(\alpha)> 0$ and $\lambda_{N}(\alpha)> 0$ such that $\K(\lambda_{T}(\alpha))=1-\alpha$ and $\K(\lambda_{N}(\alpha))=1-\alpha$, respectively. Then, we define the rejection regions $\{\Lambda_{n,T}(p_{0}) > \lambda_{T}(\alpha)\}$ and $\{\Lambda_{n^{+},N}(p_{0}) > \lambda_{N}(\alpha)\}$. Some empirical analyses in \citet{Rin(17)} show that the ``true" LR test has statistical significance at level $\alpha$, with a power that varies with $\varepsilon$ and $m$, whereas the ``na\"ive" LR test has no statistical significance at the same level $\alpha$. Theorem \ref{teo31} provides a theoretical guarantee to such empirical analyses, showing that the term $(np_{0}(1-p_{0}))^{-1}\mathsf{Var}[L]$ determines the loss in the statistical significance of the ``na\"ive" LR test. For a fixed $n\geq1$, the variance of the truncated Laplace EM is critical to control  the closeness between $\F_{n^{+},N}$ and $\K$: the larger $(np_{0}(1-p_{0}))^{-1}\mathsf{Var}[L]$ the less close $\F_{n^{+},N}$ to $K$, and hence the larger $(np_{0}(1-p_{0}))^{-1}\mathsf{Var}[L]$ the less the statistical significance of the ``na\"ive" LR test. Note that $\mathsf{Var}[L]=c_{\varepsilon,m}^{-1}\sum_{-m\leq l\leq m}l^{2}\text{e}^{-\varepsilon|l|}$ increases as $\varepsilon$ decreases and/or $m$ increases. Then, the loss in the statistical significance of the ``na\"ive" LR test is driven by $\varepsilon$ and $m$, through $\mathsf{Var}[L]$, in combination with $n$. Along the same lines of the proof of Theorem \ref{teo32}, it is easy to show that an analogous result holds true for any convolutional-type EM. In particular, for the truncated Gaussian EM, Theorem \ref{teo32} holds true with the variance of $L$ replaced by the variance of $G$ with distribution \eqref{gauss_n}. For some fixed values of $\varepsilon$ and $\delta$, that is fixed levels of $(\varepsilon,\delta)$-DP, Table \ref{tab2} shows that $\mathsf{Var}[L]\leq \mathsf{Var}[G]$, and hence the truncated Laplace EM produces a smaller loss of statistical significance in the ``na\"ive" LR test than the truncated Gaussian EM.

\begin{table}[!htb]
    \centering
    \resizebox{0.4\textwidth}{!}{\begin{tabular}{lrrrr}
        \toprule
        \multicolumn{1}{l}{} & \multicolumn{2}{c}{$m$}&\multicolumn{2}{c}{} \\
        \cmidrule(r){2-3} \cmidrule(r){4-5} 
        $(\varepsilon,\,\delta)$ & L & G & $\mathsf{Var}(L)$ & $\mathsf{Var}(G)$  \\[0.05cm]
        \midrule
            (0.025,\,0.09) & 5&   5& 9.660&   9.824   \\[0.1cm]
             (0.025,\,0.04) & 10&   10& 34.288 &   35.411  \\[0.1cm]
            (0.05,\,0.08) & 5&   5& 9.650 &  9.324  \\[0.1cm]
             (0.05,\,0.04) & 10&   10& 31.965 &  34.187\\[0.1cm]
            (0.075,\,0.08) &5&   5& 8.991 &  9.478   \\[0.1cm]
              (0.075,\,0.03) & 10&   11& 29.711 &  39.189   \\[0.1cm]
           (0.1,\,0.07) & 5&   6& 8.662 &  12.850  \\[0.1cm]
             (0.1,\,0.03) & 10&   12& 27.541 &  43.925   \\[0.1cm]
           (0.125,\,0.07) & 5&   6& 8.337 &  12.574  \\[0.1cm]
            (0.125,\,0.02) & 10&   12& 25.465 &  42.084  \\[0.1cm]
           (0.150,\,0.06) & 5&   6& 8.019 &  12.303  \\[0.1cm]
            (0.150,\,0.02) & 10&   12& 23.493 &  40.317   \\[0.1cm]
           (0.175,\,0.06) & 5&   6& 7.706 &  12.037  \\[0.1cm]
           (0.175,\,0.02)& 10&   12 & 21.631 &  38.624   \\[0.1cm]
           (0.2,\,0.05) & 5&   6& 7.399 & 11.776   \\[0.1cm]
            (0.2,\,0.02) & 10&   13& 19.884 &  42.001  \\[0.1cm]
          \bottomrule
    \end{tabular}}
    \caption{\small{Variance of the truncated Laplace EM (L) and the truncated Gaussian EM for fixed levels of $(\varepsilon,\delta)$-DP.}}
    \label{tab2}
\end{table}

We conclude by extending Theorem \ref{teo31} to an arbitrary dimension $k\geq2$. Note that it is sufficient to present such an extension under the ``na\"ive" mode, as the extension under the ``true" model is precisely Theorem \ref{teo32}. Recall that $\mathbf{A}_{n}$ is distributed as a Multinomial distribution with parameter $(n,\mathbf{p})$, $\mathbf{L}=(L_{1},\ldots,L_{k-1})$ is independent of $\mathbf{A}_{n}$ and with the $L_{i}$'s being random variables i.i.d. according to \eqref{Laplace}, and $\mathbf{B}_{n}=(B_{1,n},\ldots,B_{k-1,n})$ with $B_{i,n}=A_{i,n}+L_{i}$ for $i=1,\ldots,k-1$ and $B_{k,n}=n-|\mathbf{B}_{n}|$. Under the ``na\"ive" model, we make use of the LR test to assess goodness-of-fit in the form: $H_{0}\text{ : }\mathbf{p}=\mathbf{p}_{0}$, for a fixed $\mathbf{p}_{0}\in\Delta_{k-1}$, against $H_{1}\text{ : }\mathbf{p}\neq\mathbf{p}_{0}$. The LR under the ``true" model  is given by \eqref{LR_multidim}. We denote by $\hat{\mathbf{p}}_{n^{+},N}$ be the maximum likelihood estimator of $\mathbf{p}$ under the ``na\"ive" model for $\mathbf{b}^{+}$, that is the estimator obtained through the maximization of the likelihood function \eqref{naiv_mod}. Then, we define the LR
\begin{displaymath}
\Lambda_{n^{+},N}(\mathbf{p}_{0})=2\log\left(\frac{L_{n^{+},N}(\hat{\mathbf{p}}_{n^{+},N};\mathbf{B}_{n})}{L_{n^{+},N}(\mathbf{p}_{0};\mathbf{B}_{n})}\right),
\end{displaymath}
which is referred to as the ``na\"ive" LR, and we denote by $\text{Pr}[\Lambda_{n^{+},N}(\mathbf{p}_{0})\in\cdot;\,p_{0}]$ the distribution of $\Lambda_{n^{+},N}(\mathbf{p}_{0})$ computed with respect to $\mathbf{A}_{n}$ distributed as a Multinomial distribution with parameter $(n,\mathbf{p}_{0})$. As $n\rightarrow+\infty$, the theorem establishes an Edgeworth expansion for the distribution of ``na\"ive" LR, 
$\Lambda_{n^{+},N}(\mathbf{p}_{0})$. With respect to $k=2$, there is a new element to take into account, that is the  $(k-1)\times(k-1)$ covariance matrix $\mathbf{\Sigma}(\mathbf{p})$ whose entry $\sigma_{r,s}$ is $\sigma_{r,s}=-p_r p_s$ for $1\leq r\neq s\leq k-1$ and by $\sigma_{r,r} = p_r (1-p_r)$ for $1\leq r\leq k-1$.

\begin{theorem}\label{teo32n}
Let $\mathbb I(\pb)$ denote the Fisher information matrix of the Multinomial model with parameter $(n,\mathbf{p})$, and let $F_{n^{+},N}(t)=\mathrm{Pr}[\Lambda_{n^{+},N}(\mathbf{p}_{0})\leq t;\,\mathbf{p}_{0}]$ be the cumulative distribution function of $\Lambda_{n^{+},N}(\mathbf{p}_{0})$. For any $n,k\in\mathbb{N}$  and any $t>0$
\begin{align}\label{BE_LRNk}
&\F_{n^{+},N}(t)\\
&\notag\quad= \K_{k-1}(t)+\frac{c_{1,k,\mathbf{p}_{0}}(t)}{n^{1/2}}+\frac{c_{2,k,\mathbf{p}_{0}}(t)}{n}  \\
&\notag\quad\quad- \frac{\left(\frac 12\right)^{(k+1)/2}\text{e}^{-t/2} t^{(k-1)/2}}{\Gamma\left(\frac{k+1}{2}\right)}\frac{\mathsf{Var}[L_1]}{n} \mathrm{tr}(\mathbb I(\pb_0))+  R_{(k,n),\mathbf{p}_{0},N}(t), \nonumber 
\end{align}
with
\begin{displaymath}
|R_{(k,n),\mathbf{p}_{0},N}(t)| \leq \frac{C_{k,\mathbf{p}_{0},N}(t; m, \varepsilon)}{n^{3/2}},
\end{displaymath}
where the functions $c_{1,k,\mathbf{p}_{0}}$ and $c_{2,k,\mathbf{p}_{0}}$ are independent of $n$ and of the distribution of $\mathbf{L}$, i.e. independent of $\varepsilon$ and $m$, and the function $C_{k,\mathbf{p}_{0},N}$ is independent of $n$.
\end{theorem}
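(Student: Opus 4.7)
The plan is to follow the blueprint of the proof of Theorem~\ref{teo32}, but starting from the naïve log-likelihood, which does not account for the Laplace convolution; the bias introduced by this ignorance is exactly what the $O(n^{-1})$ correction involving $\mathsf{Var}[L_1]\,\mathrm{tr}(\mathbb I(\mathbf p_0))$ captures. First, by the concentration remark preceding the theorem, $\mathrm{Pr}[\mathbf B^+\neq \mathbf B]\le e^{-\tau n}$ for some $\tau>0$, so up to an exponentially small error (absorbed into $R_{(k,n),\mathbf p_0,N}$) one works on $\{\mathbf B^+=\mathbf B\}$, where $n^+=n$ and the naïve MLE is $\hat p_{n^+,N,i}=B_{i,n}/n$, giving $\Lambda_{n^+,N}(\mathbf p_0)=2\sum_{i=1}^k B_{i,n}\log(B_{i,n}/(np_{0,i}))$. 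Setting $Z_i:=(B_{i,n}-np_{0,i})/\sqrt n$, the expansion $(1+x)\log(1+x)=x+x^2/2-x^3/6+x^4/12+O(x^5)$ together with $\sum_i Z_i=0$ yields
\begin{equation*}
\Lambda_{n^+,N}(\mathbf p_0)=\sum_{i=1}^k \frac{Z_i^2}{p_{0,i}} - \frac{1}{3\sqrt n}\sum_{i=1}^k \frac{Z_i^3}{p_{0,i}^2} + \frac{1}{6n}\sum_{i=1}^k \frac{Z_i^4}{p_{0,i}^3} + O(n^{-3/2}),
\end{equation*}
with the remainder controlled uniformly on high-probability events by $|L^{\ast}_i|\le (k-1)m$.

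Next, decompose $Z_i=\tilde Z_i+L^{\ast}_i/\sqrt n$ where $\tilde Z_i=(A_{i,n}-np_{0,i})/\sqrt n$ and $L^{\ast}_i=L_i$ for $i<k$, $L^{\ast}_k=-\sum_{i<k}L_i$. The Sherman--Morrison identity $\boldsymbol\Sigma(\mathbf p_0)^{-1}=D^{-1}+p_{0,k}^{-1}\mathbf 1\mathbf 1^{\top}$, with $D=\mathrm{diag}(p_{0,1},\dots,p_{0,k-1})$, recasts the Pearson-type quadratic form $\sum_{i=1}^k Z_i^2/p_{0,i}$ as $\mathbf Z_{(k-1)}^{\top}\boldsymbol\Sigma(\mathbf p_0)^{-1}\mathbf Z_{(k-1)}$. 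Conditioning on $\mathbf L=\boldsymbol\ell$, the same multivariate Edgeworth machinery used in the proof of Theorem~\ref{teo32} (now with the null mean deterministically shifted by the bounded vector $\boldsymbol\ell$) produces
\begin{equation*}
\mathrm{Pr}[\Lambda_{n^+,N}(\mathbf p_0)\le t\mid \mathbf L=\boldsymbol\ell] = F_{\chi^2_{k-1}(\lambda(\boldsymbol\ell))}(t) + \frac{c_{1,k,\mathbf p_0}(t)}{\sqrt n} + \frac{c_{2,k,\mathbf p_0}(t)}{n} + \rho_n(\boldsymbol\ell;t),
\end{equation*}
uniformly for $|\ell_i|\le m$, with $\lambda(\boldsymbol\ell)=n^{-1}\sum_{i=1}^k (\ell^{\ast}_i)^2/p_{0,i}$ and $|\rho_n|\le C/n^{3/2}$. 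The coefficients $c_{1,k,\mathbf p_0}$ and $c_{2,k,\mathbf p_0}$ are unchanged from the unperturbed multinomial expansion because the multinomial cumulants are invariant under a deterministic translation of the mean.

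To pass from non-central to central chi-squared, I use the series $F_{\chi^2_{k-1}(\lambda)}(t)=e^{-\lambda/2}\sum_{j\ge 0}(\lambda/2)^j F_{\chi^2_{k-1+2j}}(t)/j!$ together with the identity $F_{\chi^2_{d+2}}(t)-F_{\chi^2_d}(t)=-2f_{\chi^2_{d+2}}(t)$, which is obtained by integrating $f'_{\chi^2_d}(t)=\tfrac 12[f_{\chi^2_{d-2}}(t)-f_{\chi^2_d}(t)]$ and using $f_{\chi^2_d}(0)=0$ for $d\ge 3$. This gives $F_{\chi^2_{k-1}(\lambda)}(t)=\K_{k-1}(t)-\lambda f_{\chi^2_{k+1}}(t)+O(\lambda^2)$. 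Averaging over $\mathbf L$ and using independence of the $L_i$ with $\mathsf{Var}[L_i]=\mathsf{Var}[L_1]$ and $\mathsf{Var}[L_k^{\ast}]=(k-1)\mathsf{Var}[L_1]$, one obtains $\E[\lambda(\mathbf L)]=(\mathsf{Var}[L_1]/n)[\sum_{i<k}p_{0,i}^{-1}+(k-1)p_{0,k}^{-1}]$, which is the stated multiple of $\mathrm{tr}(\mathbb I(\mathbf p_0))$; substituting $f_{\chi^2_{k+1}}(t)=(1/2)^{(k+1)/2}e^{-t/2}t^{(k-1)/2}/\Gamma((k+1)/2)$ reproduces the new term in \eqref{BE_LRNk}. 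The higher-order contribution $\E[\lambda(\mathbf L)^2]=O(n^{-2})$, along with the cubic and quartic Taylor remainders, feeds into the $O(n^{-3/2})$ error.

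The main obstacle is Step~3: verifying that the multivariate Sirazhdinov--Yarnold-type Edgeworth expansion for the multinomial, which is the backbone of Theorem~\ref{teo32}, continues to hold with remainder $O(n^{-3/2})$ when the null mean vector is perturbed by an arbitrary deterministic shift $\boldsymbol\ell$ in $\{-m,\dots,m\}^{k-1}$, and that all Cramér-type and Berry--Esseen-type constants depend only on $\mathbf p_0$ and $m$. This uniformity in $\boldsymbol\ell$ is what lets the subsequent averaging go through; once granted, the non-central-to-central expansion, the identification of $\mathrm{tr}(\mathbb I(\mathbf p_0))$ via independence, and the bound on $R_{(k,n),\mathbf p_0,N}$ are bookkeeping parallel to the unperturbed multinomial case.
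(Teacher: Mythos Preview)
Your approach is correct and takes a genuinely different route from the paper. The paper averages over $\mathbf L$ \emph{first}, at the level of the Edgeworth expansion for the cumulative distribution of $\mathbf Z_n$: Taylor-expanding the multinomial expansion $\F_n(\mathbf x-\boldsymbol\ell/\sqrt n)$ in $\boldsymbol\ell/\sqrt n$ and using $\E[L_i]=0$, $\E[L_iL_j]=\delta_{ij}\mathsf{Var}[L_1]$ produces an extra term $\frac{\mathsf{Var}[L_1]}{2n}\Delta_{\mathbf x}\Phi_{\mathbf 0,\boldsymbol\Sigma}(\mathbf x)$ in the density. They then integrate this density over the naive acceptance ellipsoid $\{\mathbf x^\top\mathbb I(\mathbf p_0)\mathbf x\le t\}$ via the change of variables $\mathbf y=\mathbb I(\mathbf p_0)^{1/2}\mathbf x$ and a polar-coordinate computation, which delivers the $f_{\chi^2_{k+1}}(t)\,\mathrm{tr}(\mathbb I(\mathbf p_0))$ factor directly. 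Your route---condition on $\boldsymbol\ell$, read off a non-central $\chi^2_{k-1}(\lambda(\boldsymbol\ell))$ leading term, expand via the Poisson-mixture series and the identity $\K_{d}(t)-\K_{d+2}(t)=2f_{\chi^2_{d+2}}(t)$, then average---is more probabilistic and sidesteps the explicit spherical integral; the two computations meet because $\E[\lambda(\mathbf L)]=\frac{\mathsf{Var}[L_1]}{n}\mathrm{tr}(\mathbb I(\mathbf p_0))$ is exactly what the paper's Laplacian term produces after integration.

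One point to tighten. Your claim that $c_{2,k,\mathbf p_0}$ is unchanged \emph{conditionally} on $\boldsymbol\ell$ is not quite accurate: invariance of the multinomial cumulants under translation fixes the Edgeworth \emph{density} of $\tilde{\mathbf Z}$, but the statistic itself, viewed as a function of $\tilde{\mathbf Z}$, is shifted. At order $1/n$ this generates extra $\boldsymbol\ell$-dependent pieces---for instance, the cross term $-\frac{1}{n}\sum_i \tilde Z_i^2\ell_i^\ast/p_{0,i}^2$ coming from the cubic part of your Taylor expansion, and the term $\boldsymbol\ell\cdot\nabla[\varphi_{\boldsymbol\Sigma}P_1]$ from shifting the first Edgeworth correction. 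These are not captured by the non-central $\chi^2$ leading term. The argument is rescued because every such extra piece at order $1/n$ is \emph{linear} in $\boldsymbol\ell$, and therefore vanishes after averaging over the symmetric distribution of $\mathbf L$; but this symmetry cancellation should be stated explicitly (it is exactly what the paper exploits when the first-order Taylor term in Proposition~\ref{prp:BEK} drops out). With that caveat made precise, your averaging step goes through and the bookkeeping closes as you describe.
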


See Appendix \ref{Appb} for the proof of Theorem \ref{teo32n}. Theorem \ref{teo32n} leads to the same conclusions as Theorem \ref{teo31}, showing the critical role of the Laplace perturbation mechanism. In general, Theorem \ref{teo31} and Theorem \ref{teo32n} highlight the importance of taking the perturbation into account in privacy-protecting LR tests for goodness-of-fit, and therefore the importance of negative values in the released list $\mathbf{b}$.  Official statistical agencies are typically reluctant to disseminate perturbed tables with negative frequencies, and hence a common policy to preserve $(\varepsilon,\delta)$-DP consists in reporting negative values as zeros. However, as a matter of fact, releasing tables that have an appearance similar to that of original tables may lead to ignoring the perturbation and to analyzing data as they were not perturbed, i.e. under the ``na\"ive" model  \citep{Rin(17)}. Our analysis shows the importance of taking the perturbation into account, i.e. the importance of the ``true" model versus the ``na\"ive", showing a loss in the statistical significance of the test when perturbed data are treated as they were not perturbed. Such a loss provides an evidence of the importance of taking the perturbation into account in the statistical model, thus endorsing the release of negative values if $(\varepsilon,\delta)$-DP is adopted.


\section{Discussion}\label{sec5}

Under the framework of $(\varepsilon,\delta)$-DP for frequency table, we developed a rigorous analysis of the large sample behaviour of the ``true" LR test. Our main contributions are with respect to the power analysis of the ``true" LR test, and they built upon a Bahadur-Rao large deviation expansion for the power of the ``true" LR test. By relying on a novel (sharp) large deviation principle or sum of i.i.d. random vectors, such an expansion brought out the critical quantity $\mathfrak L_{(\mathbf p_0, \mathbf p_{1})}(\ep,m)$, which determines a loss in the power of the ``true" LR test. This result has then been applied to characterize the impact of the sample size $n$ and the dimension $k$ of the table, in connection with the parameters $(\varepsilon,\delta)$, on the loss of the power of the private LR test. In particular, we determined the (sample) cost of $(\varepsilon,\delta)$-DP under the ``true" LR test, namely the additional sample size required to recover the power of the Multinomial LR test in the absence of perturbation. As a complement to our power analysis of the ``true" LR test, we investigated the well-known problem of releasing negative values in frequency tables under the $(\varepsilon,\delta)$-DP. By comparing the Edgworth expansions for the distribution of the ``true" LR test and the ``na\"ive" LR test, we showed the importance of taking the perturbation into account in private LR tests for goodness-of-fit, thus providing the first rigorous evidence to endorse the release of negative values when $(\varepsilon,\delta)$-DP is adopted. Our work provides the first rigorous treatment of privacy-protecting LR tests for goodness-of-fit in frequency tables and, in particular, it is the first work to make use of the power of the test to quantify the trade-off between confidentiality and utility. This is achieved through a non-standard large deviation analysis of the LR test, which is known to be challenging in a setting such as ours, where the statistical model is multidimensional, discrete, and not belonging to the exponential family.

Our power analysis of the ``true" LR test can be easily extended to any truncated convolutional-type EM. As an example, we considered the truncated Gaussian EM, comparing it with the truncated Laplace EM. A numerical comparison showed how the truncated Laplace EM produces a smaller decrease in the power of the ``true" LR test than the truncated Gaussian EM. Such a finding leads to the natural problem of identifying the optimal truncated convolutional-type EM, that is the truncated convolutional-type EM that leads to the smallest decrease in the power of the ``true" LR test. In particular, is the truncated Laplace EM the optimal convolutional-type EM? This is an interesting open problem, whose rigorous solution requires an extension of Theorem \ref{teo33} to deal with truncated EMs such that $v(a_{i},b_{i})=g(a_{i}-b_{i})$ for a general function $g$. Based on the proof of Theorem \ref{teo33}, we conjecture that the resulting Bahadur-Rao large deviation expansion is still in the form \eqref{LD1}, with a critical term at the order $n^{-1}$ that depends on $g$. Given that, one has to deal with a challenging optimization problem with respect to $g$, that is finding $g$ that leads to the smallest decrease in the power of the ``true" LR test. Along similar lines, one may consider the more general problem of  identifying the optimal truncated EM, thus removing the assumption that $v(a_{i},b_{i})=g(a_{i})-b_{i})$. However, for a general EM, it is difficult to conjecture the corresponding generalization of Theorem \ref{teo33}. Still related to Theorem \ref{teo33}, an interesting open problem is to consider a scaling of the parameter $(\varepsilon,\delta)$ with respect to the sample size $n$, say $(\varepsilon_{n},\delta_{n})$, and then identify the right scaling for which the contribution of the truncated Laplace EM to the power of the ``true" LR test does no longer vanish in the large $n$ limit. 

Among other directions for future research, the study of optimal properties of the ``true" LR is of special interest. In particular, for a fixed (reference) level of significance $\alpha$, let $\beta^{\ast}_{n}(\mathbf p_{1};\alpha)$ and $\beta_{n}(\mathbf p_{1};\alpha)$ denote the powers of an arbitrary test statistic and of the LR test, respectively, for testing testing $H_{0}\text{ : } \mathbf p = \mathbf p_0$ against $H_{1}\text{ : } \mathbf p=\mathbf{p}_{1}$, for fixed $\mathbf{p}_0,\,\mathbf{p}_{1}\in\Delta_{k-1}$. It is known from \citet{Bah(60)} that 
\begin{equation}\label{eq:low}
\liminf_{n\rightarrow+\infty}\frac{1}{n}\log(1-\beta^{\ast}_{n}(\mathbf p_{1};\alpha))\geq -\mathcal D_{KL}(\mathbf p_{1}\,\|\, \mathbf p_{0})
\end{equation}
and 
\begin{displaymath}
\lim_{n\rightarrow+\infty}\frac{1}{n}\log(1-\beta_{n}(\mathbf p_{1};\alpha))= -\mathcal D_{KL}(\mathbf p_{1}\,\|\, \mathbf p_{0}).
\end{displaymath}
That is, the LR test attains the lower bound \eqref{eq:low}. This is referred to as Bahadur efficiency of the LR test, and it provides a well-known optimal property of the LR test \citep{Bah(60),Bah(67)}. An open problem emerging from our work is to establish a ``private" version of the lower bound \eqref{eq:low}, and  investigate the Bahadur efficiency of the ``true" LR with respect to such a lower bound. Establishing such a property of optimality for the ``true" LR test would provide a framework to compare goodness-of-fit tests under $(\varepsilon,\delta)$-DP.

\appendix

\section{Proofs for $k=2$}\label{Appa}

In this section we will prove Theorems \ref{teo31} and \ref{teo33} in the case that our data set is a table with two cells, i.e. when $k=2$. Recall that, in this case, the statement of Theorem 
\ref{teo32} is already included in that of Theorem \ref{teo31}. 
Thus, the observable variable reduces to the counting $b$ contained in the
first cell. At the beginning, some preparatory steps are needed to analyze the likelihood, the MLE and the LR relative to the null hypothesis $H_0 : p=p_0$, under the true model \eqref{true_mod}. 

\subsection{Preparatory steps}

First of all, let us state a proposition that fixes the exact expression of the likelihood under the true model \eqref{true_mod}. 
\begin{lem} \label{lemma_likelihood}
Let $m \in \N$ and $\varepsilon > 0$ be fixed to define the Laplace distribution \eqref{Laplace}. Then, if $n > m$ and $b \in R(n,m) := \{m, m+1, \dots, n-m\}$, we have
\begin{equation} \label{LikTrueReg} 
L_{n,T}(p; b) = \frac{1}{\cepsm} \sum_{l=-m}^m e^{-\ep |l|} \binom{n}{b-l} p^{b-l} (1-p)^{n-b+l} \qquad (p \in [0,1]).
\end{equation}
Moreover, under the same assumption, for any $p \in (0,1)$ we can write
\begin{equation} \label{LikTrueDec} 
L_{n,T}(p; b) = L_n^{(0)}(p; b) \cdot H_{n,T}^{(\ep,m)}(p; b)
\end{equation}
with
\begin{align}
L_n^{(0)}(p; b) &:= \binom{n}{b} p^{b} (1-p)^{n-b} \label{Definition_Ln0} \\
H_{n,T}^{(\ep,m)}(p; b) &:= \frac{1}{\cepsm} \sum_{l=-m}^m e^{-\ep |l|} \rho(n,b,l) \left(\frac{1-p}{p}\right)^l \label{Definition_HnT} \\
\rho(n,b,l) &:= \frac{b! (n-b)!}{(b-l)! (n-b+l)!} \label{Definition_rho_nbl} \ . 
\end{align}
\end{lem}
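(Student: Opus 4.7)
The plan is a direct computation based on the fact that, under the true model, $B_n = A_n + L$ with $A_n \sim \mathrm{Bin}(n,p)$ and $L$ independent of $A_n$ distributed according to the truncated Laplace law \eqref{Laplace}. By the convolution formula,
$$L_{n,T}(p;b) \;=\; \mathrm{Pr}[B_n = b] \;=\; \sum_{l=-m}^{m} \mathrm{Pr}[A_n = b-l]\,\mathrm{Pr}[L = l].$$
The first step is to verify that, under the hypotheses $n>m$ and $b \in R(n,m) = \{m,\ldots,n-m\}$, the index $b-l$ lies in $\{0,1,\ldots,n\}$ for every $l \in \{-m,\ldots,m\}$: from $b\geq m$ one gets $b-l \geq m-m = 0$, and from $b\leq n-m$ one gets $b-l \leq (n-m)-(-m) = n$. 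Consequently no term in the convolution is suppressed by a boundary effect, and $\mathrm{Pr}[A_n = b-l]$ is the usual binomial mass $\binom{n}{b-l} p^{b-l}(1-p)^{n-b+l}$. Inserting $\mathrm{Pr}[L=l] = \cepsm^{-1} e^{-\ep|l|}$ then yields \eqref{LikTrueReg} directly.

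For the multiplicative decomposition \eqref{LikTrueDec}, the plan is to factor $L_n^{(0)}(p;b) = \binom{n}{b} p^b (1-p)^{n-b}$ out of every summand of \eqref{LikTrueReg}. Two elementary identities finish the argument: the ratio of binomial coefficients
$$\frac{\binom{n}{b-l}}{\binom{n}{b}} \;=\; \frac{b!(n-b)!}{(b-l)!(n-b+l)!} \;=\; \rho(n,b,l),$$
and, using that $p \in (0,1)$ so both $p$ and $1-p$ are positive,
$$\frac{p^{b-l}(1-p)^{n-b+l}}{p^b (1-p)^{n-b}} \;=\; \left(\frac{1-p}{p}\right)^{l}.$$
Collecting these reproduces the factor $H_{n,T}^{(\ep,m)}(p;b)$ of \eqref{Definition_HnT}, which completes the derivation of \eqref{LikTrueDec}.

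No real obstacle is anticipated: the lemma is essentially a convenient rewriting of the convolution likelihood as a \emph{binomial baseline $L_n^{(0)}$ times a correction factor $H_{n,T}^{(\ep,m)}$}, a form that is designed to be the starting point for the subsequent Edgeworth and large deviation asymptotic analyses (the correction factor $H_{n,T}^{(\ep,m)}$ is what the truncated Laplace mechanism adds on top of the classical binomial model). The only point requiring care is the verification of the range assumption $b \in R(n,m)$, which is exactly what guarantees that the general summation limits $-m\vee(b-n)$ and $m\wedge b$ appearing in \eqref{true_mod} collapse to the symmetric range $\{-m,\ldots,m\}$ of \eqref{LikTrueReg}.
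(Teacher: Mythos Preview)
Your proof is correct and follows exactly the same approach as the paper: the convolution identity $\Pr[B_n=b]=\sum_{l=-m}^m \Pr[A_n=b-l]\Pr[L=l]$ by independence, followed by factoring out $L_n^{(0)}(p;b)$ via the ratio of binomial coefficients and the ratio of powers. In fact you spell out more than the paper does---the paper dispatches the decomposition \eqref{LikTrueDec} with the phrase ``straightforward algebraic manipulations,'' whereas you make the identities $\binom{n}{b-l}/\binom{n}{b}=\rho(n,b,l)$ and $p^{b-l}(1-p)^{n-b+l}/[p^b(1-p)^{n-b}]=((1-p)/p)^l$ explicit, and you also verify that the range hypothesis $b\in R(n,m)$ is precisely what forces $b-l\in\{0,\dots,n\}$ for every $l$.
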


\begin{proof}
Start from
\begin{equation} \label{ApiuL} 
B_n = A_n + L
\end{equation}
where the random variables $A_{n}$ and $L$, defined on the probability space $(\Omega,\mathscr{F},\text{Pr})$, are independent, $A_n \sim Bin(n,p)$ and $L$ has the Laplace distribution \eqref{Laplace}. 
By definition $L_{n,T}(p; b) := \textrm{Pr}[B_n = b]$, so that, under the assumption of the Lemma, the independence of $A_{n}$ and $L$ entails
$$
\textrm{Pr}[B_n = b] = \sum_{l=-m}^m \textrm{Pr}[A_n = b-l] \cdot \textrm{Pr}[L = l]
$$  
proving \eqref{LikTrueReg}. Finally, the decomposition \eqref{LikTrueDec} ensues from straightforward algebraic manipulations.
\end{proof}

The next step aims at providing a large $n$ asymptotic expansion of the true likelihood. This expansion can be obtained by considering the observable quantity $b$ as itself dependent by $n$, according to the 
following
\begin{lem} \label{lemma_rho}
Let $\xi \in (0,1)$ be a fixed number. Under the assumption that $b = n(\xi + \epsilon_n)$ with $\lim_{n\rightarrow 0} \epsilon_n =0$, there exists $n_0 = n_0(\xi; \ep,m)$ such that 
\begin{align} \label{exp_rhon} 
\rho_n(\xi,l) &:= \rho(n, b, l) = \left(\frac{\xi}{1-\xi}\right)^l \times \\
&\times \left\{1 + \alpha_1(\xi,l) \epsilon_n + \alpha_2(\xi,l) \epsilon_n^2 + \alpha_3(\xi,l) \frac 1n + \alpha_4(\xi,l) \frac{\epsilon_n}{n} + R_n(\xi,l) \right\} \nonumber 
\end{align}
holds for any $n \geq n_0$ and $l \in \{-m, \dots, m\}$, where
\begin{align*}
\alpha_1(\xi,l) &:= \frac{l}{\xi(1-\xi)} \\
\alpha_2(\xi,l) &:= \frac{l(l-1+2\xi)}{2\xi^2(1-\xi)^2} \\
\alpha_3(\xi,l) &:= -\frac{l(l-1+2\xi)}{2\xi(1-\xi)} \\
\alpha_4(\xi,l) &:= -\frac{l(l^2 - 2l +1 + 2\xi^2 + 4l\xi - 2\xi)}{2\xi^2(1-\xi)^2} \\
|R_n(\xi,l)| &\leq C(\xi,l) \left[|\epsilon_n|^3 + \frac{1}{n^2} + \frac{\epsilon_n^2}{n} \right]
\end{align*}
for some constant $C(\xi,l)$. Therefore, under the same assumption, there holds
\begin{align}
H_{n,T}^{(\ep,m)}(p; b) &= \mathcal H_0^{(\ep,m)}(p; \xi) + \mathcal H_1^{(\ep,m)}(p; \xi) \epsilon_n + \mathcal H_2^{(\ep,m)}(p; \xi) \epsilon_n^2 \label{HnT_ep_m}\\
&+ \mathcal H_3^{(\ep,m)}(p; \xi) \frac 1n + \mathcal H_4^{(\ep,m)}(p; \xi) \frac{\epsilon_n}{n} + \mathcal R_n(p;\xi) \nonumber
\end{align}
for any $n \geq n_0$, where
\begin{align*}
\mathcal H_0^{(\ep,m)}(p; \xi) &:= \frac{1}{\cepsm} \sum_{l=-m}^m e^{-\ep |l|} \left(\frac{\xi(1-p)}{p(1-\xi)}\right)^l \\
\mathcal H_i^{(\ep,m)}(p; \xi) &:= \frac{1}{\cepsm} \sum_{l=-m}^m e^{-\ep |l|} \alpha_i(\xi,l) \left(\frac{\xi(1-p)}{p(1-\xi)}\right)^l \qquad (i= 1,2,3,4)\\
|\mathcal R_n(p; \xi)| &\leq \mathcal C(p;\xi) \left[|\epsilon_n|^3 + \frac{1}{n^2} + \frac{\epsilon_n^2}{n} \right]
\end{align*}
with $\mathcal C(p;\xi) := \frac{1}{\cepsm} \sum_{l=-m}^m e^{-\ep |l|} C(\xi,l) \left(\frac{\xi(1-p)}{p(1-\xi)}\right)^l$. 
\end{lem}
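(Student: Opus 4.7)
The plan is to obtain \eqref{exp_rhon} by a direct Taylor expansion of $\log\rho_n(\xi,l)$, and then to propagate that expansion term-by-term into $H_{n,T}^{(\ep,m)}$ using only the definition \eqref{Definition_HnT}. First, for $l\ge 0$, writing out the ratio of factorials in \eqref{Definition_rho_nbl} as
\[
\rho(n,b,l)=\frac{\prod_{j=0}^{l-1}(b-j)}{\prod_{j=1}^{l}(n-b+j)},
\]
and for $l<0$ the symmetric identity $\rho(n,b,l)=\bigl[\rho(n,b-l,-l)\bigr]^{-1}$ (with $(b-l)$ playing the role of $b$), both cases are reduced to a finite product. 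Substituting $b=n(\xi+\epsilon_n)$ and pulling out a factor of $n$ from each of the $|l|$ numerator and $|l|$ denominator factors yields
\[
\rho_n(\xi,l)=\Bigl(\tfrac{\xi}{1-\xi}\Bigr)^{l}\cdot\prod_{j=0}^{l-1}\Bigl(1+\tfrac{\epsilon_n}{\xi}-\tfrac{j}{n\xi}\Bigr)\cdot\prod_{j=1}^{l}\Bigl(1-\tfrac{\epsilon_n}{1-\xi}+\tfrac{j}{n(1-\xi)}\Bigr)^{-1}
\]
for $l\ge 0$, with the analogous formula for $l<0$.

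Next, I would take logarithms and Taylor expand each $\log(1+w)$ about $w=0$, keeping the terms up to total degree $2$ in the two small parameters $u:=\epsilon_n$ and $v:=1/n$. Choose $n_0$ so that for all $n\ge n_0$ one has $b-l\ge 0$ and $n-b+l\ge 0$ for every $|l|\le m$, and so that each of the arguments $w$ of the logarithm is bounded by, say, $1/2$ in absolute value; this is possible because $\xi\in(0,1)$ is fixed and $\epsilon_n\to 0$. The linear contributions then sum to $\alpha_1(\xi,l)\epsilon_n+\alpha_3(\xi,l)/n$ after a short arithmetic manipulation using $\sum_{j=0}^{l-1}j=l(l-1)/2$ and $\sum_{j=1}^{l}j=l(l+1)/2$, while the quadratic contributions combine (together with the $-\alpha_1^2/2$ correction that arises when one exponentiates the log-expansion back) to yield exactly the stated $\alpha_2$ and $\alpha_4$. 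The cubic (and higher) terms in the Taylor expansion of each $\log(1+w)$ are of order $|u|^3+v^2|l|^3+uvl^2+\dots$; summing over $j=0,\dots,|l|-1$ and taking $l$ in the finite set $\{-m,\dots,m\}$ absorbs everything into a remainder bounded by $C(\xi,l)[|\epsilon_n|^3+n^{-2}+\epsilon_n^2/n]$ for a constant $C(\xi,l)$ depending polynomially on $l$ and on $(\xi,1-\xi)^{-1}$.

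Exponentiating $\log\rho_n(\xi,l)-l\log(\xi/(1-\xi))$ and expanding the exponential up to the same total order gives \eqref{exp_rhon} with the precise $\alpha_i$ stated. The identification
\[
\alpha_2=\tfrac12\alpha_1^2+\bigl[\text{coeff.\ of }\epsilon_n^2\text{ in }\log\rho_n\bigr],\qquad \alpha_4=\alpha_1\alpha_3+\bigl[\text{coeff.\ of }\epsilon_n/n\text{ in }\log\rho_n\bigr],
\]
is how the quadratic coefficients reshuffle; carrying out the resulting arithmetic yields $\alpha_2(\xi,l)=l(l-1+2\xi)/(2\xi^2(1-\xi)^2)$ and the stated $\alpha_4$. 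The case $l<0$ is handled identically with the signs of the products interchanged; because $|l|\le m$, all constants $C(\xi,l)$ can be replaced by a single constant $C(\xi,m)$ uniform in $l$.

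Finally, \eqref{HnT_ep_m} is immediate: the sum in \eqref{Definition_HnT} is finite (over $-m\le l\le m$), so one may substitute \eqref{exp_rhon} term-by-term and collect. The factor $(\xi/(1-\xi))^l$ multiplies $((1-p)/p)^l$ to produce $(\xi(1-p)/(p(1-\xi)))^l$, giving exactly the kernels defining $\mathcal H_0^{(\ep,m)},\dots,\mathcal H_4^{(\ep,m)}$. The residual $\mathcal R_n(p;\xi)$ is the sum of the individual $R_n(\xi,l)$ remainders weighted by $c_{\ep,m}^{-1}e^{-\ep|l|}(\xi(1-p)/(p(1-\xi)))^l$, and the stated bound follows from the triangle inequality. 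The only delicate point is the uniformity of the remainder in $l$, which is resolved by the observation just made that $|l|\le m$ is bounded.
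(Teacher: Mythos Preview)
Your proposal is correct and essentially complete. The route, however, differs from the paper's. You factor out $(\xi/(1-\xi))^l$, take the logarithm of the remaining finite product, Taylor expand each $\log(1+w)$ to second order in the pair $(\epsilon_n,1/n)$, sum using $\sum_{j=0}^{l-1}j=l(l-1)/2$ and $\sum_{j=1}^{l}j=l(l+1)/2$, and then exponentiate (which is where the cross-terms $\tfrac12\alpha_1^2$ and $\alpha_1\alpha_3$ enter). The paper instead writes the numerator and denominator as falling/rising factorials, expands each via Stirling numbers of the first kind, and inverts the denominator by the geometric series $1/(1+t)=1-t+t^2+o(t^2)$ before multiplying out. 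Your logarithmic approach is more systematic and avoids Stirling numbers entirely; the paper's approach is more direct but requires carrying several polynomial expansions in parallel. For negative $l$, the paper uses the cleaner symmetry $\rho(n,b,-l)=\rho(n,n-b,l)$ (so $\xi\mapsto 1-\xi$, $\epsilon_n\mapsto-\epsilon_n$ with no shift in $b$), whereas your reciprocal identity $\rho(n,b,l)=[\rho(n,b-l,-l)]^{-1}$ shifts $b$ by $|l|$, introducing an extra $|l|/n$ into the effective $\epsilon_n$; this still works because $|l|\le m$ is bounded, but you should note explicitly that this shift is absorbed into the $1/n$ and $\epsilon_n/n$ terms. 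The propagation to $H_{n,T}^{(\ep,m)}$ is identical in both arguments.
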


\begin{proof}
We start by dealing with \eqref{exp_rhon}. First, we find $n_0 = n_0(\xi; \ep,m)$ in such a way that the assumptions of Lemma \ref{lemma_likelihood} are fulfilled for any $n \geq n_0$. Now, if $l=0$, then 
$\rho_n(\xi,0) = 1$ and the thesis follows trivially. Also, if $l=1$, then $\rho_n(\xi,1) = (\xi + \epsilon_n)/(1-\xi- \epsilon_n + \frac 1n)$ and the validity of the thesis can be checked by direct computation.  
Then, if $l \in \{2, \dots, m\}$, we use \eqref{Definition_rho_nbl} to get
$$
\rho(n, b, l) = \frac{(b)_{\downarrow l}}{(n-b+1)_{\uparrow l}} = \frac{(b)_{\downarrow l} (n-b)}{(n-b)_{\uparrow (l+1)}} = \frac{\sum_{k=1}^l \mathfrak s(l,k) b^k}{\sum_{k=1}^{l+1} |\mathfrak s(l+1,k)| (n-b)^{k-1}} 
$$
where $\mathfrak s(l,k)$ denotes the Stirling number of first kind. Whence, 
$$
\rho_n(\xi,l) = \frac{\sum_{k=1}^l \mathfrak s(l,k) n^{k-l}(\xi + \epsilon_n)^k}{\sum_{k=1}^{l+1} |\mathfrak s(l+1,k)| n^{k-l-1} (1- \xi - \epsilon_n)^{k-1}} = \frac{\mathfrak N_n}{\mathfrak D_n}\ .
$$
At this stage, recalling that $\mathfrak s(l,l) = 1$ and $\mathfrak s(l,l-1) = -\binom{l}{2}$, and exploiting the binomial formula, we have
\begin{align*}
\mathfrak{N}_n &:= \xi^l \left\{1 + l \frac{\epsilon_n}{\xi} + \frac{l(l-1)}{2} \frac{\epsilon_n^2}{\xi^2} - \frac{l(l-1)}{2n} \frac{1}{\xi} - \frac{l(l-1)^2}{2n} \frac{\epsilon_n}{\xi^2} + R_n(\xi,l)\right\} \\
\mathfrak{D}_n &:= (1-\xi)^l  \left\{1 - l \frac{\epsilon_n}{1-\xi} + \frac{l(l-1)}{2} \frac{\epsilon_n^2}{(1-\xi)^2} + \frac{l(l+1)}{2n} \frac{1}{1-\xi} \right. \\
&\left. - \frac{l(l-1)(l+1)}{2n} \frac{\epsilon_n}{(1-\xi)^2} + R_n(\xi,l)\right\} 
\end{align*}
for suitable expressions of $R_n(\xi,l)$ (possibly different from line to line) satisfying, in any case, the relation 
$$
|R_n(\xi,l)| \leq C(\xi,l) \left[|\epsilon_n|^3 + \frac{1}{n^2} + \frac{\epsilon_n^2}{n} \right] \ .
$$ 
To proceed further, we exploit that $\frac{1}{1+t} = 1 - t + t^2 + o(t^2)$ as $t\rightarrow 0$, to obtain
\begin{align*}
\mathfrak{D}_n^{-1} &:= \frac{1}{(1-\xi)^l} \left\{1 + l \frac{\epsilon_n}{1-\xi} + \frac{l(l+1)}{2} \frac{\epsilon_n^2}{(1-\xi)^2} - \frac{l(l+1)}{2n} \frac{1}{1-\xi} \right. \\
&\left. - \frac{l(l+1)^2}{2n} \frac{\epsilon_n}{(1-\xi)^2} + R_n(\xi,l)\right\} \ .
\end{align*}
The thesis now follows by multiplying the last expression by that of $\mathfrak{N}_n$, neglecting all the terms which are comparable with $R_n(\xi,l)$. Thus, \eqref{exp_rhon} is proved also for all 
$l \in \{2, \dots, m\}$.

For $l \in \{-m, \dots, -1\}$ the argument can be reduced to the previous case. In fact, since $\rho(n, b, -l) = \rho(n, n-b, l)$, we can put $h := -l > 0$, $\eta := 1-\xi$ and $\delta_n := -\epsilon_n$ 
to obtain
\begin{align*}
\rho_n(\xi,l) &= \left(\frac{\eta}{1-\eta}\right)^h \left\{1 + \alpha_1(\eta,h) \delta_n + \alpha_2(\eta,h) \delta_n^2 + \alpha_3(\eta,h) \frac 1n \right. \\ 
& \left. + \alpha_4(\eta,h) \frac{\delta_n}{n} + R_n(\eta,h) \right\} \ .
\end{align*}
This completes the proof of \eqref{exp_rhon} since $\left(\frac{\eta}{1-\eta}\right)^h = \left(\frac{\xi}{1-\xi}\right)^l$ and $-\alpha_1(\eta,h) = \alpha_1(\xi,l)$, $\alpha_2(\eta,h) = \alpha_2(\xi,l)$,
$\alpha_3(\eta,h) = \alpha_3(\xi,l)$ and $-\alpha_4(\eta,h) = \alpha_4(\xi,l)$.

Finally, \eqref{HnT_ep_m} follows immediately from the combination of \eqref{Definition_HnT} with \eqref{exp_rhon}.
\end{proof}

We can now provide a large $n$ asymptotic expansion for the MLE relative to the true likelihood $L_{n,T}$, contained in the following
\begin{lem} \label{lem:MLE}
Under the same assumption of Lemma \ref{lemma_rho}, there holds
\begin{equation} \label{Expansion_MLE_true}
\hat{p}_{n,T} = \xi + \epsilon_n + O\left(\frac{\epsilon_n^3}{n}\right) \ .
\end{equation}
\end{lem}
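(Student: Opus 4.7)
The starting point is the factorization $L_{n,T}(p;b) = L_n^{(0)}(p;b)\cdot H_{n,T}^{(\ep,m)}(p;b)$ provided by Lemma \ref{lemma_likelihood}. Taking logarithmic derivatives in $p$, the stationarity condition $\partial_p \log L_{n,T}(\hat{p}_{n,T};b) = 0$ becomes
\[
\frac{b - n\hat{p}_{n,T}}{\hat{p}_{n,T}(1-\hat{p}_{n,T})} \;=\; -\,\partial_p \log H_{n,T}^{(\ep,m)}(\hat{p}_{n,T};\, b).
\]
Since $b = n(\xi+\epsilon_n)$, writing $\hat{p}_{n,T} = \xi + \epsilon_n + \delta_n$ recasts this as the fixed-point relation
\[
\delta_n \;=\; \frac{\hat{p}_{n,T}(1-\hat{p}_{n,T})}{n}\,\partial_p \log H_{n,T}^{(\ep,m)}(\hat{p}_{n,T};\, b),
\]
so the problem reduces to estimating the right-hand side at values of $p$ close to $\xi+\epsilon_n$.

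For this I would plug the expansion \eqref{HnT_ep_m} from Lemma \ref{lemma_rho} into $\partial_p \log H_{n,T}^{(\ep,m)}$ and evaluate at $p = \xi+\epsilon_n$, exploiting the symmetry of the truncated Laplace distribution \eqref{Laplace} (so that $\E[L] = \E[L^3] = 0$). A direct check yields the identities $\mathcal H_0^{(\ep,m)}(\xi;\xi) = 1$, $\mathcal H_1^{(\ep,m)}(\xi;\xi) = 0$, and $\partial_p \mathcal H_0^{(\ep,m)}(\xi;\xi) = 0$, together with the key cancellation $\partial_p^2 \mathcal H_0^{(\ep,m)}(\xi;\xi) + \partial_p \mathcal H_1^{(\ep,m)}(\xi;\xi) = 0$ (both terms equal $\mathsf{Var}[L]/(\xi^2(1-\xi)^2)$ up to sign, via the explicit form of $\alpha_1(\xi,l)$). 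Combining these cancellations with the remainder bound from Lemma \ref{lemma_rho}, the would-be $O(\epsilon_n)$- and $O(n^{-1})$-contributions to $\partial_p H_{n,T}^{(\ep,m)}(\xi+\epsilon_n;b)$ collapse, leaving $\partial_p \log H_{n,T}^{(\ep,m)}(\xi+\epsilon_n;b) = O(\epsilon_n^3)$. Substituting back into the fixed-point relation gives the claimed estimate $\delta_n = O(\epsilon_n^3/n)$.

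To make the argument rigorous, I would first prove that $\partial_p^2 \log L_{n,T}(p; b) = -n/[p(1-p)]\cdot (1+o(1))$ uniformly on a shrinking neighborhood of $\xi + \epsilon_n$. This guarantees that the score is strictly decreasing on that neighborhood, so that $\hat p_{n,T}$ is well defined and unique locally. A short contraction-mapping argument applied to the displayed fixed-point relation then simultaneously delivers the existence of $\hat p_{n,T}$ in that neighborhood and the quantitative bound on $\delta_n$.

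The main obstacle is the careful bookkeeping of the symmetry-induced cancellations: naively, the expansion of $\partial_p \log H_{n,T}^{(\ep,m)}$ at $p = \xi + \epsilon_n$ contributes nonzero terms of order $\epsilon_n$ and $1/n$, and only after identifying the precise algebraic relations among the $\partial_p^k \mathcal H_i^{(\ep,m)}(\xi;\xi)$ forced by the vanishing of odd moments of $L$ does the sharper order $\epsilon_n^3$ emerge. A secondary technical point is that the remainder $\mathcal R_n(p;\xi)$ in \eqref{HnT_ep_m} must itself be differentiated in $p$; this calls either for a mildly strengthened form of Lemma \ref{lemma_rho} or for a direct estimate starting from the explicit representation \eqref{LikTrueReg} of the likelihood.
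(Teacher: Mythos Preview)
Your approach---rewriting the score equation as a fixed point $\delta_n = \tfrac{\hat p(1-\hat p)}{n}\,\partial_p\log H_{n,T}^{(\ep,m)}(\hat p;b)$ and then estimating $\partial_p\log H$ at $p=\xi+\epsilon_n$ via symmetry-induced cancellations---is essentially the paper's strategy, repackaged. The paper writes the same score equation, inserts the ansatz $p=\xi+\epsilon_n+\Gamma_0/n+\Gamma_1\epsilon_n/n+\Gamma_2\epsilon_n^2/n+\Delta_n$, Taylor-expands the $h_i^{(\ep,m)}$ around $p=\xi$, and shows $\Gamma_0=\Gamma_1=\Gamma_2=0$ by computing the relevant $[\partial_p^j\mathcal H_i^{(\ep,m)}(p;\xi)]_{p=\xi}$. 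Your contraction-mapping framing is a clean way to handle existence and uniqueness, which the paper leaves implicit.

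There is, however, a genuine gap in your bookkeeping. The cancellations you list---$\mathcal H_0(\xi;\xi)=1$, $\mathcal H_1(\xi;\xi)=0$, $\partial_p\mathcal H_0(\xi;\xi)=0$, and $\partial_p^2\mathcal H_0(\xi;\xi)+\partial_p\mathcal H_1(\xi;\xi)=0$---only kill the constant and $O(\epsilon_n)$ contributions to $\partial_p\log H$ at $p=\xi+\epsilon_n$. They leave an $O(\epsilon_n^2)$ term with coefficient $\tfrac12\partial_p^3\mathcal H_0(\xi;\xi)+\partial_p^2\mathcal H_1(\xi;\xi)+\partial_p\mathcal H_2(\xi;\xi)$, which must also be shown to vanish before you can claim $O(\epsilon_n^3)$. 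The paper carries out exactly this computation (it is what forces $\Gamma_2=0$): each of the three summands equals a multiple of $(1-2\xi)\mathsf{Var}(L)/[\xi^3(1-\xi)^3]$, and the multiples $-\tfrac32,\,1,\,\tfrac12$ sum to zero. This step requires the explicit form of $\alpha_2(\xi,l)$ from Lemma~\ref{lemma_rho}, not merely the vanishing of odd moments of $L$.

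Separately, your claim that the ``$O(n^{-1})$-contributions collapse'' is not correct: $\partial_p\mathcal H_3(\xi;\xi)=\tfrac{(2\xi-1)\mathsf{Var}(L)}{2\xi^2(1-\xi)^2}\neq 0$ in general. The paper does not make this cancellation either; it simply declares the $1/n$-terms lower order than $\epsilon_n^2$ (invoking the law of the iterated logarithm) and absorbs them into the remainder $\omega_n$. So your fixed-point actually delivers $\delta_n=O(\epsilon_n^3/n+1/n^2)$, matching what the paper's argument yields once one tracks $\omega_n$ through.
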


\begin{proof}
First, we get a large $n$ expansion for the $\log$-likelihood, as follows
\begin{align*}
\ell_{n,T}(p;b) := \log L_{n,T}(p;b) &= \log \binom{n}{b} + n(\xi + \epsilon_n) \log p + n(1- \xi - \epsilon_n) \log(1- p) \\
&+ h_0^{(\ep,m)}(p; \xi) + h_1^{(\ep,m)}(p; \xi) \epsilon_n + h_2^{(\ep,m)}(p; \xi) \epsilon_n^2 \\
&+ h_3^{(\ep,m)}(p; \xi) \frac 1n + h_4^{(\ep,m)}(p; \xi) \frac{\epsilon_n}{n} + r_n(p;\xi)
\end{align*}
where
\begin{align*}
h_0^{(\ep,m)}(p; \xi) &:= \log \mathcal H_0^{(\ep,m)}(p; \xi) \\
h_1^{(\ep,m)}(p; \xi) &:= \frac{\mathcal H_1^{(\ep,m)}(p; \xi)}{\mathcal H_0^{(\ep,m)}(p; \xi)} \\
h_2^{(\ep,m)}(p; \xi) &:= \frac{\mathcal H_2^{(\ep,m)}(p; \xi)}{\mathcal H_0^{(\ep,m)}(p; \xi)} - \frac 12 \left(\frac{\mathcal H_1^{(\ep,m)}(p; \xi)}{\mathcal H_0^{(\ep,m)}(p; \xi)}\right)^2 \\
h_3^{(\ep,m)}(p; \xi) &:= \frac{\mathcal H_3^{(\ep,m)}(p; \xi)}{\mathcal H_0^{(\ep,m)}(p; \xi)} \\
h_4^{(\ep,m)}(p; \xi) &:= \frac{\mathcal H_4^{(\ep,m)}(p; \xi)}{\mathcal H_0^{(\ep,m)}(p; \xi)} - \frac{\mathcal H_1^{(\ep,m)}(p; \xi)}{\mathcal H_0^{(\ep,m)}(p; \xi)}
\frac{\mathcal H_3^{(\ep,m)}(p; \xi)}{\mathcal H_0^{(\ep,m)}(p; \xi)} \\
|r_n(p;\xi)| &\leq C(p;\xi) \left[|\epsilon_n|^3 + \frac{1}{n^2} + \frac{\epsilon_n^2}{n} \right]
\end{align*}
for some constant $C(p;\xi)$. Then, to find the maximum point of the likelihood, we study the equation $\frac{\ddr}{\ddr p} \ell_{n,T}(p;b) = 0$, which reads
\begin{align*}
& n \frac{p -\xi -\epsilon_n}{p(1-p)} = [\partial_p h_0^{(\ep,m)}(p; \xi)] + [\partial_p h_1^{(\ep,m)}(p; \xi)] \epsilon_n + [\partial_p h_2^{(\ep,m)}(p; \xi)] \epsilon_n^2 \\
&+ [\partial_p h_3^{(\ep,m)}(p; \xi)] \frac 1n + [\partial_p h_4^{(\ep,m)}(p; \xi)] \frac{\epsilon_n}{n} + r_n^{'}(p;\xi) \ . 
\end{align*}
The solution of such an equation can be obtained by inserting the expression $\xi + \epsilon_n + \Gamma_0(\xi) \frac{1}{n} + \Gamma_1(\xi) \frac{\epsilon_n}{n} + \Gamma_2(\xi) \frac{\epsilon_n^2}{n} + 
\Delta_n(\xi; \epsilon_n)$ in the place of $p$, and then expanding both members. For the left-hand side we get
\begin{align*}
\frac{1}{\xi(1-\xi)} &\left\{\Gamma_0(\xi) + \left(\Gamma_1(\xi) + \frac{2\xi-1}{\xi(1-\xi)}\Gamma_0(\xi)\right) \epsilon_n \right. \\
&\left. +  \left(\Gamma_2(\xi) + \frac{2\xi-1}{\xi(1-\xi)}\Gamma_1(\xi) + \frac{1-4\xi+4\xi^2}{\xi^2(1-\xi)^2}\Gamma_0(\xi)\right) \epsilon_n^2 + \omega_n\right\}
\end{align*}
where $\omega_n = O(|\epsilon_n|^3 + \frac 1n)$. On the other hand, a Taylor expansion around $p=\xi$ yields for the right-hand side 
\begin{align*}
&[\partial_p h_0^{(\ep,m)}(p; \xi)]_{|p=\xi} + \left([\partial_p^2 h_0^{(\ep,m)}(p; \xi)]_{|p=\xi} + [\partial_p h_1^{(\ep,m)}(p; \xi)]_{|p=\xi}\right) \epsilon_n \\
&+ \left(\frac 12 [\partial_p^3 h_0^{(\ep,m)}(p; \xi)]_{|p=\xi} + [\partial_p^2 h_1^{(\ep,m)}(p; \xi)]_{|p=\xi} + [\partial_p h_2^{(\ep,m)}(p; \xi)]_{|p=\xi}\right) \epsilon_n^2 + \omega_n\ .
\end{align*}
Here, it is important to notice that we have disregarded all the terms of $O(\frac 1n)$-type, considering them of lower order with respect to the terms of $O(\epsilon_n^2)$-type. This is due to the law of iterated 
logarithm, by which $\frac 1n A_n - p \sim \sqrt{(\log\log n)/n}$, when $A_n \sim Bin(n,p)$. Thus, requiring identity between the above expansions yields
\begin{align*}
\Gamma_0(\xi) &= \xi(1-\xi) [\partial_p h_0^{(\ep,m)}(p; \xi)]_{|p=\xi} \\
\Gamma_1(\xi) &= \xi(1-\xi) \left([\partial_p^2 h_0^{(\ep,m)}(p; \xi)]_{|p=\xi} + [\partial_p h_1^{(\ep,m)}(p; \xi)]_{|p=\xi}\right) + (1-2\xi) [\partial_p h_0^{(\ep,m)}(p; \xi)]_{|p=\xi} \\
\Gamma_2(\xi) &= \xi(1-\xi) \left(\frac 12 [\partial_p^3 h_0^{(\ep,m)}(p; \xi)]_{|p=\xi} + [\partial_p^2 h_1^{(\ep,m)}(p; \xi)]_{|p=\xi} + [\partial_p h_2^{(\ep,m)}(p; \xi)]_{|p=\xi}\right) \\
&- \frac{1-4\xi+4\xi^2}{\xi(1-\xi)}[\partial_p h_0^{(\ep,m)}(p; \xi)]_{|p=\xi} + (1-2\xi)\left([\partial_p^2 h_0^{(\ep,m)}(p; \xi)]_{|p=\xi} + [\partial_p h_1^{(\ep,m)}(p; \xi)]_{|p=\xi}\right)\\
&+ (1-2\xi)^2 [\partial_p h_0^{(\ep,m)}(p; \xi)]_{|p=\xi} \ .
\end{align*}
At this stage, to complete the proof it remains to evaluate the various terms $[\partial_p^j h_i^{(\ep,m)}(p; \xi)]_{|p=\xi}$. We start with the terms involving $h_0^{(\ep,m)}(p; \xi)$. First, we notice that
\begin{align*}
\partial_p h_0^{(\ep,m)}(p; \xi) &= \frac{\partial_p \mathcal H_0^{(\ep,m)}(p; \xi)}{\mathcal H_0^{(\ep,m)}(p; \xi)} \\
\partial_p^2 h_0^{(\ep,m)}(p; \xi) &=\frac{\mathcal H_0^{(\ep,m)}(p; \xi) \partial_p^2 \mathcal H_0^{(\ep,m)}(p; \xi) - [\partial_p \mathcal H_0^{(\ep,m)}(p; \xi)]^2}{[\mathcal H_0^{(\ep,m)}(p; \xi)]^2} \\
\partial_p^3 h_0^{(\ep,m)}(p; \xi) &= \frac{[\mathcal H_0^{(\ep,m)}(p; \xi)]^2 \partial_p^3 \mathcal H_0^{(\ep,m)}(p; \xi) + 2 [\partial_p \mathcal H_0^{(\ep,m)}(p; \xi)]^3}
{[\mathcal H_0^{(\ep,m)}(p; \xi)]^3} \\
&- \frac{3\partial_p \mathcal H_0^{(\ep,m)}(p; \xi)\partial_p^2 \mathcal H_0^{(\ep,m)}(p; \xi)}{[\mathcal H_0^{(\ep,m)}(p; \xi)]^2}
\end{align*}
After putting $\phi(p; \xi) := \log \frac{\xi(1-p)}{p(1-\xi)}$ and noticing that 
$$
\mathcal H_0^{(\ep,m)}(p; \xi) = \frac{1}{\cepsm} \sum_{l=-m}^m \exp\{-\ep |l| + l \phi(p; \xi)\}\ ,
$$
we get
\begin{align*}
\partial_p \mathcal H_0^{(\ep,m)}(p; \xi) &= \partial_p\phi(p; \xi) \frac{1}{\cepsm} \sum_{l=-m}^m l \exp\{-\ep |l| + l \phi(p; \xi)\} \\
\partial_p^2 \mathcal H_0^{(\ep,m)}(p; \xi) &= \partial_p^2\phi(p; \xi) \frac{1}{\cepsm} \sum_{l=-m}^m l \exp\{-\ep |l| + l \phi(p; \xi)\} \\
&+ [\partial_p\phi(p; \xi)]^2 \frac{1}{\cepsm} \sum_{l=-m}^m l^2 \exp\{-\ep |l| + l \phi(p; \xi)\}\\
\partial_p^3 \mathcal H_0^{(\ep,m)}(p; \xi) &= \partial_p^3\phi(p; \xi) \frac{1}{\cepsm} \sum_{l=-m}^m l \exp\{-\ep |l| + l \phi(p; \xi)\} \\
&+ 3\partial_p\phi(p; \xi) \partial_p^2\phi(p; \xi)\frac{1}{\cepsm} \sum_{l=-m}^m l^2 \exp\{-\ep |l| + l \phi(p; \xi)\} \\
&+ [\partial_p\phi(p; \xi)]^3 \frac{1}{\cepsm} \sum_{l=-m}^m l^3 \exp\{-\ep |l| + l \phi(p; \xi)\}\ .
\end{align*}
Now, when $p=\xi$, we have 
\begin{align*}
\phi(\xi; \xi) &= 0 \\
[\partial_p\phi(p; \xi)]_{|p=\xi} &= -\frac{1}{\xi(1-\xi)} \\
[\partial_p^2\phi(p; \xi)]_{|p=\xi} &= \frac{1-2\xi}{\xi^2(1-\xi)^2}
\end{align*}
and, hence,
\begin{align*}
\mathcal H_0^{(\ep,m)}(p; \xi)_{|p=\xi} &= 1 \\
[\partial_p \mathcal H_0^{(\ep,m)}(p; \xi)]_{|p=\xi} &= 0 \\
[\partial_p^2 \mathcal H_0^{(\ep,m)}(p; \xi)]_{|p=\xi} &= \frac{\mathsf{Var}(L)}{\xi^2(1-\xi)^2} \\
[\partial_p^3 \mathcal H_0^{(\ep,m)}(p; \xi)]_{|p=\xi} &= -3\frac{(1-2\xi)\mathsf{Var}(L)}{\xi^3(1-\xi)^3}\ . 
\end{align*}
Then, we consider the terms containing $h_1^{(\ep,m)}(p; \xi)$. We start from the identities
\begin{align*}
\partial_p h_1^{(\ep,m)}(p; \xi) &= \frac{\mathcal H_0^{(\ep,m)}(p; \xi) \partial_p \mathcal H_1^{(\ep,m)}(p; \xi) - \mathcal H_1^{(\ep,m)}(p; \xi) \partial_p \mathcal H_0^{(\ep,m)}(p; \xi)}
{[\mathcal H_0^{(\ep,m)}(p; \xi)]^2} \\
\partial_p^2 h_1^{(\ep,m)}(p; \xi) & = \frac{\mathcal H_0^{(\ep,m)}(p; \xi) \partial_p^2 \mathcal H_1^{(\ep,m)}(p; \xi) - \mathcal H_1^{(\ep,m)}(p; \xi) \partial_p^2 \mathcal H_0^{(\ep,m)}(p; \xi)}
{[\mathcal H_0^{(\ep,m)}(p; \xi)]^2} \\
&-2 \frac{\partial_p \mathcal H_0^{(\ep,m)}(p; \xi)[\mathcal H_0^{(\ep,m)}(p; \xi) \partial_p \mathcal H_1^{(\ep,m)}(p; \xi) - \mathcal H_1^{(\ep,m)}(p; \xi) \partial_p \mathcal H_0^{(\ep,m)}(p; \xi)]}
{[\mathcal H_0^{(\ep,m)}(p; \xi)]^3}
\end{align*}
and
\begin{align*}
\partial_p \mathcal H_1^{(\ep,m)}(p; \xi) &= \frac{\partial_p\phi(p; \xi)}{\xi(1-\xi)} \frac{1}{\cepsm} \sum_{l=-m}^m l^2 \exp\{-\ep |l| + l \phi(p; \xi)\} \\
\partial_p^2 \mathcal H_1^{(\ep,m)}(p; \xi) &= \frac{\partial_p^2\phi(p; \xi)}{\xi(1-\xi)} \frac{1}{\cepsm} \sum_{l=-m}^m l^2 \exp\{-\ep |l| + l \phi(p; \xi)\} \\
&+ \frac{[\partial_p\phi(p; \xi)]^2}{\xi(1-\xi)} \frac{1}{\cepsm} \sum_{l=-m}^m l^3 \exp\{-\ep |l| + l \phi(p; \xi)\}\ . 
\end{align*}
Now, when $p=\xi$, we have 
\begin{align*}
\mathcal H_1^{(\ep,m)}(p; \xi)_{|p=\xi} &= 0 \\
[\partial_p \mathcal H_1^{(\ep,m)}(p; \xi)]_{|p=\xi} &= -\frac{\mathsf{Var}(L)}{\xi^2(1-\xi)^2} \\
[\partial_p^2 \mathcal H_1^{(\ep,m)}(p; \xi)]_{|p=\xi} &= \frac{(1-2\xi)\mathsf{Var}(L)}{\xi^3(1-\xi)^3} \ .
\end{align*}
Finally, we consider the terms containing $h_2^{(\ep,m)}(p; \xi)$. We start from the identities
\begin{align*}
\partial_p h_2^{(\ep,m)}(p; \xi) &= \frac{\mathcal H_0^{(\ep,m)}(p; \xi) \partial_p \mathcal H_2^{(\ep,m)}(p; \xi) - \mathcal H_2^{(\ep,m)}(p; \xi) \partial_p \mathcal H_0^{(\ep,m)}(p; \xi)}
{[\mathcal H_0^{(\ep,m)}(p; \xi)]^2} \\
&- \frac{\mathcal H_1^{(\ep,m)}(p; \xi)[\mathcal H_0^{(\ep,m)}(p; \xi) \partial_p \mathcal H_1^{(\ep,m)}(p; \xi) -  \mathcal H_1^{(\ep,m)}(p; \xi) \partial_p \mathcal H_0^{(\ep,m)}(p; \xi)]}
{[\mathcal H_0^{(\ep,m)}(p; \xi)]^3}
\end{align*}
and
\begin{align*}
\partial_p \mathcal H_2^{(\ep,m)}(p; \xi) &= \frac{\partial_p\phi(p; \xi)}{2\xi^2(1-\xi)^2} \frac{1}{\cepsm} \sum_{l=-m}^m l^3 \exp\{-\ep |l| + l \phi(p; \xi)\} \\
&- \frac{(1-2\xi)\partial_p\phi(p; \xi)}{2\xi^2(1-\xi)^2} \frac{1}{\cepsm} \sum_{l=-m}^m l^2 \exp\{-\ep |l| + l \phi(p; \xi)\} \ .
\end{align*}
Now, when $p=\xi$, we have 
\begin{align*}
\mathcal H_2^{(\ep,m)}(p; \xi)_{|p=\xi} &= \frac{\mathsf{Var}(L)}{2\xi^2(1-\xi)^2} \\
[\partial_p \mathcal H_2^{(\ep,m)}(p; \xi)]_{|p=\xi} &= \frac{(1-2\xi)\mathsf{Var}(L)}{2\xi^3(1-\xi)^3}  \ .
\end{align*}
At the end of all these computation, we are in a position to conclude that $\Gamma_0(\xi) = \Gamma_1(\xi) = \Gamma_2(\xi) = 0$ proving \eqref{Expansion_MLE_true}.
\end{proof}

The way is now paved to analyze the LR relative to the null hypothesis $H_0 : p = p_0$, by means of the following
\begin{lem} \label{lem_gothic}
Under the same assumption of Lemma \ref{lemma_rho} with $\xi = p_0$, there hold
\begin{align}
\Lambda_{n,T}(p_0; b) &:= -2\log\left( \frac{L_{n,T}(p_0;b)}{L_{n,T}(\hat{p}_{n,T}; b)} \right) \nonumber \\
&= 2\log\left( \frac{L_n^{(0)}(\hat{p}_{n,T}; b)}{L_n^{(0)}(p_0; b)} \right) + 2\log\left( \frac{H_n^{(\ep,m)}(\hat{p}_{n,T}; b)}{H_n^{(\ep,m)}(p_0; b)} \right) \label{Dec_LambdanT}
\end{align}
along with
\begin{align}
2\log\left( \frac{L_n^{(0)}(\hat{p}_{n,T}; b)}{L_n^{(0)}(p_0; b)} \right) = &n\left\{\frac{1}{p_0(1-p_0)}\epsilon_n^2 +  \frac{2p_0 - 1}{3p_0^2(1-p_0)^2}\epsilon_n^3 \right. \label{Dec_Lambdan0} \\
&\left. + \frac{1 - 3p_0 + 3p_0^2}{6p_0^3(1-p_0)^3}\epsilon_n^4\right\} + O(\epsilon_n^5) \nonumber 
\end{align}
and
\begin{equation} \label{Dec_HnT}
2\log\left( \frac{H_n^{(\ep,m)}(\hat{p}_{n,T}; b)}{H_n^{(\ep,m)}(p_0; b)} \right) = -\frac{\mathsf{Var}(L)}{p_0^2(1-p_0)^2}  \epsilon_n^2 + \frac{(1-2p_0)\mathsf{Var}(L)}{p_0^2(1-p_0)^2}  \frac{\epsilon_n}{n}
+ O(\epsilon_n^3)\ .
\end{equation}
Therefore, putting $\zeta_n := \sqrt{n} \epsilon_n$, we get
\begin{equation} \label{final_expansion_LambdanT}
\Lambda_{n,T}(p_0; b) = \mathfrak{A}_n^{(\ep,m)}(p_0) \zeta_n^2 + \mathfrak{B}_n^{(\ep,m)}(p_0) \frac{\zeta_n^3}{\sqrt{n}} + \mathfrak{C}_n^{(\ep,m)}(p_0) \frac{\zeta_n^4}{n} + O(n^{-3/2})
\end{equation}
with
\begin{align}
\mathfrak{A}_n^{(\ep,m)}(p_0) &= \frac{1}{p_0(1-p_0)} - \frac{\mathsf{Var}(L)}{np_0^2(1-p_0)^2} + O\left(\frac{1}{n^2}\right) \label{A_gothic} \\
\mathfrak{B}_n^{(\ep,m)}(p_0) &= \frac{2p_0 - 1}{3p_0^2(1-p_0)^2} + \frac{\mathfrak{B}_{\ast}^{(\ep,m)}(p_0)}{n} + O\left(\frac{1}{n^2}\right) \label{B_gothic} \\
\mathfrak{C}_n^{(\ep,m)}(p_0) &= \frac{1 - 3p_0 + 3p_0^2}{6p_0^3(1-p_0)^3} + \frac{\mathfrak{C}_{\ast}^{(\ep,m)}(p_0)}{n} + O\left(\frac{1}{n^2}\right) \label{C_gothic}
\end{align}
for suitable terms $\mathfrak{B}_{\ast}^{(\ep,m)}(p_0)$ and $\mathfrak{C}_{\ast}^{(\ep,m)}(p_0)$.
\end{lem}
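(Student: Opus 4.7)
The plan is to prove the three decompositions \eqref{Dec_LambdanT}, \eqref{Dec_Lambdan0}, \eqref{Dec_HnT} in sequence, and then assemble \eqref{final_expansion_LambdanT} by substituting $\epsilon_n = \zeta_n/\sqrt{n}$. The identity \eqref{Dec_LambdanT} is essentially automatic: it is obtained by taking logarithms of the factorization \eqref{LikTrueDec} from Lemma \ref{lemma_likelihood}, evaluated at $\hat{p}_{n,T}$ and at $p_0$ and subtracted.

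For \eqref{Dec_Lambdan0}, I would exploit the fact that $b/n = p_0 + \epsilon_n$ is exactly the maximizer of the unperturbed Bernoulli likelihood $L_n^{(0)}(p;b)$. By Lemma \ref{lem:MLE}, $\hat{p}_{n,T} - (p_0 + \epsilon_n) = O(\epsilon_n^3/n)$, so a Taylor expansion of $\log L_n^{(0)}$ around this stationary point contributes only a negligible $O(\epsilon_n^6/n)$ correction. The computation then reduces to expanding $2[b \log(b/(np_0)) + (n-b)\log((n-b)/(n(1-p_0)))]$ as a power series in $\epsilon_n$ via $\log(1+x) = x - x^2/2 + x^3/3 - x^4/4 + O(x^5)$, and collecting the resulting rational coefficients through identities such as $\frac{1}{2p_0} + \frac{1}{2(1-p_0)} = \frac{1}{2p_0(1-p_0)}$.

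For \eqref{Dec_HnT}, I would invoke the logarithmic version of the expansion of $H_{n,T}^{(\varepsilon,m)}(p;b)$ already obtained within the proof of Lemma \ref{lem:MLE}, namely $\log H_{n,T}^{(\varepsilon,m)}(p;b) = h_0^{(\varepsilon,m)}(p;\xi) + h_1^{(\varepsilon,m)}(p;\xi)\epsilon_n + h_2^{(\varepsilon,m)}(p;\xi)\epsilon_n^2 + h_3^{(\varepsilon,m)}(p;\xi)/n + h_4^{(\varepsilon,m)}(p;\xi)\epsilon_n/n + r_n(p;\xi)$. The difference between the evaluations at $\hat{p}_{n,T}$ and at $\xi = p_0$ is then handled by Taylor-expanding each $h_i^{(\varepsilon,m)}(\cdot;\xi)$ around $p=\xi$, using $\hat{p}_{n,T} - \xi = \epsilon_n + O(\epsilon_n^3/n)$ and the derivative values $[\partial_p^j \mathcal H_i^{(\varepsilon,m)}(p;\xi)]_{p=\xi}$ already tabulated in the proof of Lemma \ref{lem:MLE}. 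The identities $h_0'(\xi;\xi) = 0$ and $h_1(\xi;\xi) = 0$ enforce clean cancellations: the $\epsilon_n^2$ coefficient emerges from the combination $\tfrac12 h_0''(\xi;\xi) + h_1'(\xi;\xi)$, both summands being proportional to $\mathsf{Var}(L)/(p_0^2(1-p_0)^2)$, with the net result $-\mathsf{Var}(L)/(p_0^2(1-p_0)^2)$ after multiplying by $2$; analogously, the $\epsilon_n/n$ coefficient issues from $h_3'(\xi;\xi)$.

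Finally, substituting $\epsilon_n = \zeta_n/\sqrt{n}$ into the sum of \eqref{Dec_Lambdan0} and \eqref{Dec_HnT} yields \eqref{final_expansion_LambdanT} with $\mathfrak{A}_n^{(\varepsilon,m)}(p_0), \mathfrak{B}_n^{(\varepsilon,m)}(p_0), \mathfrak{C}_n^{(\varepsilon,m)}(p_0)$ displayed in \eqref{A_gothic}--\eqref{C_gothic}. The main obstacle will be the bookkeeping of the $\epsilon_n/n$ contribution in \eqref{Dec_HnT}: unlike the $\epsilon_n^2$ leading term, it collects pieces from the Taylor expansion of $h_3^{(\varepsilon,m)}$ and from the $O(\epsilon_n^3/n)$ remainder of $\hat{p}_{n,T} - \xi$, and pinning down both the sign and the prefactor $(1-2p_0)$ requires precise tracking of the moments of the Laplace distribution \eqref{Laplace} against the weights $\alpha_i(\xi,l)$ appearing in Lemma \ref{lemma_rho}. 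The analogous coefficients $\mathfrak{B}_\ast^{(\varepsilon,m)}(p_0)$ and $\mathfrak{C}_\ast^{(\varepsilon,m)}(p_0)$ in \eqref{B_gothic}--\eqref{C_gothic} are collected as by-products of the same expansion, but are not needed explicitly because they appear only at orders beyond $n^{-3/2}$ once $\epsilon_n = \zeta_n/\sqrt{n}$ is inserted.
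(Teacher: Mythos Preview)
Your proposal is correct and follows essentially the same route as the paper's proof. The only noteworthy differences are organizational: for \eqref{Dec_Lambdan0} you invoke the stationary-point property of $b/n$ to dismiss the $O(\epsilon_n^3/n)$ discrepancy between $\hat p_{n,T}$ and $b/n$ in one stroke, whereas the paper carries the perturbation term $\Delta_n(p_0)$ through the Taylor expansion and observes afterward that it cancels; for \eqref{Dec_HnT} you work directly with the logarithmic expansion $h_i^{(\varepsilon,m)}$ already set up inside the proof of Lemma~\ref{lem:MLE}, while the paper instead recomputes $H_n^{(\varepsilon,m)}$ at $\hat p_{n,T}$ and at $p_0$ from the definition \eqref{Definition_HnT} and the expansion \eqref{exp_rhon} and only then takes the log of the ratio. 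Both routes rely on the same collection of derivative values $[\partial_p^j \mathcal H_i^{(\varepsilon,m)}(p;\xi)]_{p=\xi}$ tabulated in the proof of Lemma~\ref{lem:MLE}, so the underlying content is identical.
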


\begin{remark} \label{rmk:Taylor}
Notice that the expression inside the brackets in \eqref{Dec_Lambdan0} coincides with the Taylor polynomial of order 4 of the map $\epsilon_n \mapsto \mathcal D_{KL}(p_0 + \epsilon_n\| p_0)$ where
$$
\mathcal D_{KL}(p\| p_0) := p \log\left(\frac{p}{p_0}\right) + (1-p) \log\left(\frac{1-p}{1-p_0}\right)
$$ 
denotes the Kullback-Leibler divergence relative to the Bernoulli model. In particular, we have that
$$
\frac{1}{p_0(1-p_0)} = \frac{\partial^2}{\partial p^2} \mathcal D_{KL}(p\| p_0)_{| p = p_0}
$$
coincides with the Fisher information of the Bernoulli model. Finally, it is worth noticing that, in the Bernoulli model, the Fisher information just coincides with the inverse of the variance.
\end{remark}

\begin{proof}
Since 
$$
\Lambda_{n,T}(p_0; b) := -2\log\left(\frac{\sup_{p\in \{p_0\}} L_{n,T}(p;b)}{\sup_{p\in [0,1]} L_{n,T}(p; b)} \right) = -2\log\left( \frac{L_{n,T}(p_0;b)}{L_{n,T}(\hat{p}_{n,T}; b)} \right)
$$ 
holds by definition, identity \eqref{Dec_LambdanT} follows immediately from \eqref{LikTrueDec}. Next, we derive \eqref{Dec_Lambdan0} by combining \eqref{Definition_Ln0} with 
\eqref{Expansion_MLE_true}. In fact, we have
\begin{align*}
2\log\left( \frac{L_n^{(0)}(\hat{p}_{n,T}; b)}{L_n^{(0)}(p_0; b)} \right) = 2n &\left\{(p_0 + \epsilon_n) \log\left(1 + \frac{\epsilon_n}{p_0} + \frac{\Delta_n(p_0)}{p_0}\right) \right. \\
& \left. + (1 - p_0 - \epsilon_n) \log\left(1 - \frac{\epsilon_n}{1-p_0} - \frac{\Delta_n(p_0)}{1- p_0}\right)  \right\}
\end{align*}
where $\Delta_n(p_0) = \frac 1n[\Gamma_3(p_0) \epsilon_n^3 + \Gamma_4(p_0) \epsilon_n^4 + O(\epsilon_n^5)]$, according to \eqref{Expansion_MLE_true}. Therefore, using the Taylor expansion of the function 
$z \mapsto \log(1+z)$, we can further specialize the last expression as
\begin{align*}
2n &\left\{ (p_0 + \epsilon_n) \left[ \frac{\epsilon_n + \Delta_n(p_0)}{p_0} - \frac{(\epsilon_n + \Delta_n(p_0))^2}{2p_0^2} + \frac{(\epsilon_n + \Delta_n(p_0))^3}{3p_0^3} 
- \frac{(\epsilon_n + \Delta_n(p_0))^4}{4p_0^4} \right] \right. \\
& \left. - (1 - p_0 - \epsilon_n)\left[\frac{\epsilon_n + \Delta_n(p_0)}{1-p_0} + \frac{(\epsilon_n + \Delta_n(p_0))^2}{2(1-p_0)^2} + \frac{(\epsilon_n + \Delta_n(p_0))^3}{3(1-p_0)^3} 
+ \frac{(\epsilon_n + \Delta_n(p_0))^4}{4(1-p_0)^4} \right] \right\} \\
&+ O(\epsilon_n^5) \ . 
\end{align*}
At this stage, careful algebraic computations based on the Newton binomial formula lead to \eqref{Dec_Lambdan0}. Incidentally, it is interesting to notice that the terms $\Gamma_3(p_0)$ and $\Gamma_4(p_0)$,
depending on the perturbation, do not appear in the expansion \eqref{Dec_Lambdan0}.

Coming to \eqref{Dec_HnT}, we start by noticing that
$$
\frac{1-\hat{p}_{n,T}}{\hat{p}_{n,T}} = \frac{1-p_0}{p_0} \left[1 - \frac{\epsilon_n}{p_0(1-p_0)}  + \frac{\epsilon_n^2}{p_0^2(1-p_0)} + O(\epsilon_n^3)\right]
$$
which entails
\begin{align*}
\left(\frac{1-\hat{p}_{n,T}}{\hat{p}_{n,T}}\right)^l = \left(\frac{1-p_0}{p_0}\right)^l &\left[1 - \frac{l}{p_0(1-p_0)}\epsilon_n  + \frac{l}{p_0^2(1-p_0)} \epsilon_n^2 \right. \\
&\left. + \frac{l(l-1)}{2p_0^2(1-p_0)^2} \epsilon_n^2 + O(\epsilon_n^3)\right]\ .
\end{align*}
This identity, combined with \eqref{Definition_HnT} and \eqref{exp_rhon}, yields
$$
H_n^{(\ep,m)}(\hat{p}_{n,T}; b) = 1 - \frac{\mathsf{Var}(L)}{2np_0(1-p_0)} + \frac{(1-2p_0)\mathsf{Var}(L)}{2p_0^2(1-p_0)^2}\frac{\epsilon_n}{n} + O(\epsilon_n^3)\ .
$$ 
Moreover, another combination of \eqref{Definition_HnT} and \eqref{exp_rhon} gives
$$
H_n^{(\ep,m)}(p_0; b) = 1 + \frac{\mathsf{Var}(L)}{2p_0^2(1-p_0)^2} \epsilon_n^2 - \frac{\mathsf{Var}(L)}{2np_0(1-p_0)} + \frac{(1-2p_0)\mathsf{Var}(L)}{p_0^2(1-p_0)^2}\frac{\epsilon_n}{n} + O(\epsilon_n^3)\ .
$$ 
Then, \eqref{Dec_HnT} is a straightforward consequence of the last two identities.

Finally, identities \eqref{final_expansion_LambdanT}-\eqref{A_gothic}-\eqref{B_gothic}-\eqref{C_gothic} follows immediately from \eqref{Dec_LambdanT}-\eqref{Dec_Lambdan0}-\eqref{Dec_HnT}.
\end{proof}

The last preparatory result provides an equivalent reformulation of the event 
\begin{equation} \label{event_reject_H0}
\{b : H_0\ \text{is\ rejected}\} = \{\Lambda_{n,T}(p_0; b) > \lambda_{\alpha}\}
\end{equation}
for some $\lambda_{\alpha} > 0$, to be determined after assessing the level $\alpha$ of the test.  

\begin{lem}
Under the same assumption of Lemma \ref{lemma_rho} with $\xi = p_0$, the event \eqref{event_reject_H0} is equivalent to the event
\begin{equation} \label{event_reject_H0_Z}
\{b : \zeta_n > \mathfrak{Z}_{n,+}^{(\ep,m)}(p_0; \lambda_{\alpha})\} \cup \{b : \zeta_n < \mathfrak{Z}_{n,-}^{(\ep,m)}(p_0; \lambda_{\alpha})\} 
\end{equation}
where
\begin{align}
\mathfrak{Z}_{n,+}^{(\ep,m)}(p_0; \lambda_{\alpha}) &:= \sqrt{\lambda_{\alpha}p_0(1-p_0)} + \frac{(1 - 2p_0) \lambda_{\alpha}}{6\sqrt{n}} \nonumber \\
&+ \frac{\sqrt{\lambda_{\alpha}} }{2n\sqrt{p_0(1-p_0)}} \left\{\mathsf{Var}(L) - \lambda_{\alpha}(1 + 2p_0 - 2p_0^2)\right\}
+O(n^{-3/2}) \label{Z+_gothic}  \\
\mathfrak{Z}_{n,-}^{(\ep,m)}(p_0; \lambda_{\alpha}) &:= -\sqrt{\lambda_{\alpha}p_0(1-p_0)} + \frac{(1 - 2p_0) \lambda_{\alpha}}{6\sqrt{n}} \nonumber \\
&- \frac{\sqrt{\lambda_{\alpha}} }{2n\sqrt{p_0(1-p_0)}} \left\{\mathsf{Var}(L) - \lambda_{\alpha}(1 + 2p_0 - 2p_0^2)\right\} +O(n^{-3/2}) \label{Z-_gothic} \ .
\end{align}
\end{lem}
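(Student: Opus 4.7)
The plan is to invert the expansion of $\Lambda_{n,T}(p_0;b)$ established in Lemma \ref{lem_gothic}, viewing it as a polynomial of degree $4$ in $\zeta_n$ modulo an $O(n^{-3/2})$ error. Writing $A_0 := 1/(p_0(1-p_0))$, $A_1 := -\mathsf{Var}(L)/(p_0^2(1-p_0)^2)$, $B_0 := (2p_0-1)/(3p_0^2(1-p_0)^2)$, $C_0 := (1-3p_0+3p_0^2)/(6p_0^3(1-p_0)^3)$, formulas \eqref{final_expansion_LambdanT}--\eqref{C_gothic} give
$$
\Lambda_{n,T}(p_0;b) = A_0\zeta_n^2 + \frac{B_0}{\sqrt n}\zeta_n^3 + \frac{C_0}{n}\zeta_n^4 + \frac{A_1}{n}\zeta_n^2 + O(n^{-3/2}).
$$
Since $A_0 > 0$ and the quadratic part is strictly convex with minimum $0$ at $\zeta_n = 0$, while the remaining corrections are $O(n^{-1/2})$ uniformly on bounded sets, for any fixed $\lambda_\alpha > 0$ and $n$ sufficiently large the map $\zeta_n \mapsto \Lambda_{n,T}(p_0;b)$ is strictly decreasing on $(-\infty, \zeta_n^{\ast})$ and strictly increasing on $(\zeta_n^{\ast}, +\infty)$ for some $\zeta_n^{\ast}$ near $0$. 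The set $\{\Lambda_{n,T}(p_0;b) > \lambda_\alpha\}$ is therefore the disjoint union of the two half-lines $\{\zeta_n > \mathfrak{Z}_{n,+}\}$ and $\{\zeta_n < \mathfrak{Z}_{n,-}\}$, where $\mathfrak{Z}_{n,-} < 0 < \mathfrak{Z}_{n,+}$ are the two real roots of $\Lambda_{n,T}(p_0;b) = \lambda_\alpha$, which establishes the equivalence of \eqref{event_reject_H0} and \eqref{event_reject_H0_Z}.

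To derive \eqref{Z+_gothic}--\eqref{Z-_gothic} I will insert the ansatz $\mathfrak{Z}_{n,\pm} = z_{0,\pm} + z_{1,\pm}/\sqrt n + z_{2,\pm}/n + O(n^{-3/2})$ into $\Lambda_{n,T}(p_0;b) = \lambda_\alpha$ and equate coefficients of $n^{0}$, $n^{-1/2}$ and $n^{-1}$ in turn. At order $n^0$, $A_0 z_{0,\pm}^2 = \lambda_\alpha$ yields $z_{0,\pm} = \pm\sqrt{\lambda_\alpha p_0(1-p_0)}$. At order $n^{-1/2}$, $2A_0 z_{0,\pm} z_{1,\pm} + B_0 z_{0,\pm}^3 = 0$ yields $z_{1,\pm} = -B_0 z_{0,\pm}^2/(2A_0) = (1-2p_0)\lambda_\alpha/6$; this is independent of the sign of $z_{0,\pm}$ and reproduces the identical $n^{-1/2}$-terms in \eqref{Z+_gothic} and \eqref{Z-_gothic}. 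At order $n^{-1}$, one obtains the linear relation
$$
A_0\bigl(z_{1,\pm}^2 + 2 z_{0,\pm} z_{2,\pm}\bigr) + 3 B_0 z_{0,\pm}^2 z_{1,\pm} + C_0 z_{0,\pm}^4 + A_1 z_{0,\pm}^2 = 0,
$$
in which only $z_{0,\pm}^2 = \lambda_\alpha p_0(1-p_0)$ and $z_{1,\pm}$ (both sign-independent) enter the non-$z_{2,\pm}$ summands. Solving for $z_{2,\pm}$ gives $(2A_0 z_{0,\pm})^{-1}$ times an expression that is identical for the two choices of sign, so $z_{2,\pm}$ flips sign with $z_{0,\pm}$, matching the opposite $n^{-1}$-contributions in \eqref{Z+_gothic} and \eqref{Z-_gothic}. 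Substituting the explicit values of $A_0, A_1, B_0, C_0$ and simplifying then produces the stated coefficient $\pm\frac{\sqrt{\lambda_\alpha}}{2\sqrt{p_0(1-p_0)}}\{\mathsf{Var}(L) - \lambda_\alpha(1+2p_0-2p_0^2)\}$.

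The main obstacle will be the algebraic consolidation at order $n^{-1}$: three Multinomial contributions ($A_0 z_{1,\pm}^2$, $3B_0 z_{0,\pm}^2 z_{1,\pm}$ and $C_0 z_{0,\pm}^4$) must combine exactly into the factor $-\lambda_\alpha(1+2p_0-2p_0^2)$ after division by $2A_0 z_{0,\pm}$, while $A_1 z_{0,\pm}^2$ alone furnishes the $\mathsf{Var}(L)$-term, consistently with the fact that $A_1$ is the only $n^{-1}$-correction in $\mathfrak{A}_n$ that is sensitive to the Laplace perturbation. The remainder $O(n^{-3/2})$ in $\mathfrak{Z}_{n,\pm}$ is inherited from the $O(n^{-3/2})$ error in \eqref{final_expansion_LambdanT} together with the $O(n^{-2})$ tails in \eqref{A_gothic}--\eqref{C_gothic}, which, when paired with $\zeta_n^j$ for $j \leq 4$, contribute only $o(n^{-1})$.
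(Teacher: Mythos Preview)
Your proposal is correct and follows essentially the same route as the paper: both argue that the quartic $\zeta_n\mapsto\Lambda_{n,T}$ has a unique minimum at the origin and hence two real roots, and both compute those roots by inserting a power-series ansatz in $n^{-1/2}$ and matching coefficients through order $n^{-1}$. The only cosmetic difference is that the paper keeps the compound coefficients $\mathfrak{A}_n^{(\ep,m)},\mathfrak{B}_n^{(\ep,m)},\mathfrak{C}_n^{(\ep,m)}$ intact and expands $\sqrt{\lambda_\alpha/\mathfrak{A}_n^{(\ep,m)}}$ at the very end, whereas you separate the leading constants $A_0,B_0,C_0$ from the $n^{-1}$-correction $A_1$ from the outset; the two bookkeepings are equivalent.
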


\begin{proof}
Since $\mathfrak{A}_n^{(\ep,m)}(p_0) > 0$ and $\mathfrak{C}_n^{(\ep,m)}(p_0) > 0$ eventually, we first notice that, always eventually, the graphic of the function
$$
\Xi_n : x \mapsto \mathfrak{A}_n^{(\ep,m)}(p_0) x^2 + \mathfrak{B}_n^{(\ep,m)}(p_0) \frac{x^3}{\sqrt{n}} + \mathfrak{C}_n^{(\ep,m)}(p_0) \frac{x^4}{n}
$$
goes to $+\infty$ as $x \to \pm \infty$, decreases for $x<0$, increases for $x>0$, and has a unique absolute minimum at $x=0$. Thus, for any $\lambda_{\alpha} > 0$ (to be determined later on), the equation
$\Xi_n(x) = \lambda_{\alpha}$ admits two real solutions, that we just denote as $\mathfrak{Z}_{n,+}^{(\ep,m)}(p_0; \alpha)$ and $\mathfrak{Z}_{n,-}^{(\ep,m)}(p_0; \alpha)$. To evaluate them, we put
\begin{align*}
\mathfrak{Z}_{n,+}^{(\ep,m)}(p_0; \lambda_{\alpha}) &:= \sqrt{\frac{\lambda_{\alpha}}{\mathfrak{A}_n^{(\ep,m)}(p_0)}} + \frac{\xi_{+}^{(\ep,m)}(p_0;\lambda_{\alpha})}{\sqrt{n}} + 
\frac{\eta_{+}^{(\ep,m)}(p_0; \lambda_{\alpha})}{n} + O(n^{-3/2})\\
\mathfrak{Z}_{n,-}^{(\ep,m)}(p_0;\lambda_{\alpha}) &:= -\sqrt{\frac{\lambda_{\alpha}}{\mathfrak{A}_n^{(\ep,m)}(p_0)}} + \frac{\xi_{-}^{(\ep,m)}(p_0; \lambda_{\alpha})}{\sqrt{n}} + 
\frac{\eta_{-}^{(\ep,m)}(p_0; \lambda_{\alpha})}{n} + O(n^{-3/2})\ .
\end{align*}
After inserting these expression into the function $\Xi_n$, we set equal to zero the coefficients of $\frac{1}{\sqrt{n}}$ and $\frac{1}{n}$, obtaining
\begin{align*}
\xi_{+}^{(\ep,m)}(p_0; \lambda_{\alpha}) = \xi_{-}^{(\ep,m)}(p_0; \alpha) &= -\frac{\mathfrak{B}_n^{(\ep,m)}(p_0) \lambda_{\alpha}}{2 [\mathfrak{A}_n^{(\ep,m)}(p_0)]^2} \\
\eta_{+}^{(\ep,m)}(p_0; \lambda_{\alpha}) = -\eta_{-}^{(\ep,m)}(p_0; \alpha) 
&= \frac{\lambda_{\alpha}^{3/2}}{2 [\mathfrak{A}_n^{(\ep,m)}(p_0)]^{5/2}}\left[\frac{5[\mathfrak{B}_n^{(\ep,m)}(p_0)]^2}{4\mathfrak{A}_n^{(\ep,m)}(p_0)} - 
\mathfrak{C}_n^{(\ep,m)}(p_0)\right]\ .
\end{align*}
Finally, to prove \eqref{Z+_gothic}-\eqref{Z-_gothic}, it is enough to use \eqref{A_gothic}-\eqref{B_gothic}-\eqref{C_gothic} of Lemma \ref{lem_gothic}, showing that
$$
\sqrt{\frac{\lambda_{\alpha}}{\mathfrak{A}_n^{(\ep,m)}(p_0)}} =  \sqrt{\lambda_{\alpha}p_0(1-p_0)} \left(1 + \frac{\mathsf{Var}(L)}{2np_0(1-p_0)}\right) + O\left(\frac{1}{n^2}\right) 
$$
and
\begin{align*}
-\frac{\mathfrak{B}_n^{(\ep,m)}(p_0) \lambda_{\alpha}}{2 [\mathfrak{A}_n^{(\ep,m)}(p_0)]^2} &= -\frac{\lambda_{\alpha}}{2} [p_0(1-p_0)]^2 \frac{2p_0 - 1}{3p_0^2(1-p_0)^2}\\
\frac{\lambda_{\alpha}^{3/2}}{2 [\mathfrak{A}_n^{(\ep,m)}(p_0)]^{5/2}}\left[\frac{5[\mathfrak{B}_n^{(\ep,m)}(p_0)]^2}{4\mathfrak{A}_n^{(\ep,m)}(p_0)} - 
\mathfrak{C}_n^{(\ep,m)}(p_0)\right] &= \frac{\lambda_{\alpha}^{3/2}}{2} [p_0(1-p_0)]^{5/2} \left[ \frac{5}{4}p_0(1-p_0) \times \right. \\
&\left. \times \left(\frac{2p_0 - 1}{3p_0^2(1-p_0)^2}\right)^2 - \frac{1 - 3p_0 + 3p_0^2}{6p_0^3(1-p_0)^3}\right]\ , 
\end{align*}
which concludes the proof.
\end{proof}


\subsection{Proof of Theorem \ref{teo31}} \label{sect:proof_teo31}
We start again from \eqref{ApiuL} where the random variables $A_{n}$ and $L$, defined on the probability space $(\Omega,\mathscr{F},\text{Pr})$, are independent, $A_n \sim Bin(n,p_0)$ and $L$ has the Laplace distribution \eqref{Laplace}. Thus, under the validity of $H_0$, we define the event $E_{0,n}\subset\Omega$ as
\begin{equation}\label{event}
E_{0,n}:=\left\{\omega\in\Omega\text{ $:$ }\left|\frac{A_{n}(\omega)}{n}-p_0\right|\leq c(p_0)\left(\frac{\log n}{n}\right)^{1/2}\right\}
\end{equation}
for some $c(p_0)>0$. By a standard large deviation argument (see, e.g., Theorem 1 of \citet{Oka(59)}), there holds
$$
\mathrm{Pr}[E_{0,n}]\geq 1-2n^{-\frac{c(p_0)^{2}}{2}},
$$
which implies that the probability $E_{0,n}^c$ can be made arbitrarily small after a suitable choice of $c(p_0)$. See also Proposition 2.1 of \citet{DoRe(19)} for more refined bounds.
In particular, it can be chosen so that
\begin{equation}\label{bound1}
\mathrm{Pr}[E_{0,n}^c] \leq 2n^{-3/2}.
\end{equation}
In this way, for $\ell=T,N$, we can write
$$
\F_{n,\ell}(t)= \text{Pr}[\Lambda_{n,\ell}(p_{0}) \leq t, E_{0,n}] + \text{Pr}[\Lambda_{n,\ell}(p_{0}) \leq t, E_{0,n}^c]
$$
with
$$
\mathrm{Pr}[\Lambda_{n,\ell}(p_{0}) \leq t, E_{0,n}^c] \leq 2n^{-3/2}.
$$
The advantage of such a preliminary step is that, on $E_{0,n}$, the assumptions of all the Lemmata stated in the previous subsection are fulfilled and, by resorting to \eqref{event_reject_H0_Z}, we can write
$$
\left\{\Lambda_{n,T}(p_{0}) \leq t, E_{0,n}\right\} = \left\{\mathfrak{Z}_{n,-}^{(\ep,m)}(p_0; t) \leq Z_n \leq \mathfrak{Z}_{n,+}^{(\ep,m)}(p_0; t), E_{0,n}\right\}
$$
where
\begin{equation} \label{eq:Vn}
Z_n := \frac{B_n - np_0}{\sqrt{n}} = \sqrt{p_0(1-p_0)} W_n + \frac{1}{\sqrt{n}} L
\end{equation}
with
$$
W_n := \frac{A_n - np_0}{\sqrt{np_0(1-p_0)}}\ . 
$$

\begin{remark}\label{rmk:pezza}
This very same argument, based on straightforward application of concentration inequalities for the Binomial distribution, makes clear that the additional randomness that emerges in the ``na\"ive" model, due to the presence of the random sample size $n^{+}$, is irrelevant in our asymptotic analysis. Indeed, after fixing $p_0 \in (0,1)$, we get that the probability that $B_n \neq B_n^{+}$ (that is, the first cell contains a negative value) is bounded by a term of the form $e^{-\tau n}$ for some $\tau > 0$. Since terms of this form are irrelevant in the Edgeworth expansion \eqref{exp_n}, we can work from the beginning under the assumption that $B_n = B_n^{+}$ (that is, the first cell does not contain a negative value) which entails, in particular, that $n^{+} = n$. An analogous consideration holds in larger dimensions, that is when $k > 2$, if $\mathbf{p}_0$ is fixed in the interior of $\Delta_{k-1}$.
\end{remark}

Therefore, in view of \eqref{bound1}, we can conclude that
\begin{align}
\F_{n,T}(t) &= \mathrm{Pr}\left[\mathfrak{Z}_{n,-}^{(\ep,m)}(p_0; t) \leq Z_n \leq \mathfrak{Z}_{n,+}^{(\ep,m)}(p_0; t)\right] + O(n^{-3/2})\nonumber \\
&= \G_n\left(\mathfrak{Z}_{n,+}^{(\ep,m)}(p_0; t)\right) - \lim_{\tau \to \mathfrak{Z}_{n,-}^{(\ep,m)}(p_0; t)^-} \G_n(\tau) + O(n^{-3/2}) \label{FDR_true}
\end{align}
where $\G_n$ denotes the distribution function of $Z_n$. Moreover, the same analysis developed in the previous section shows that  
\begin{align}
\F_{n,N}(t) &= \mathrm{Pr}\left[\mathfrak{V}_{n,-}^{(\ep,m)}(p_0; t) \leq Z_n \leq \mathfrak{V}_{n,+}^{(\ep,m)}(p_0; t)\right] + O(n^{-3/2})\nonumber \\
&= \G_n\left(\mathfrak{V}_{n,+}^{(\ep,m)}(p_0; t)\right) - \lim_{\tau \to \mathfrak{V}_{n,-}^{(\ep,m)}(p_0; t)^-} \G_n(\tau) + O(n^{-3/2}) \label{FDR_naive}
\end{align}
with
\begin{align}
\mathfrak{V}_{n,+}^{(\ep,m)}(p_0; \lambda_{\alpha}) &:= \sqrt{\lambda_{\alpha}p_0(1-p_0)} + \frac{(1 - 2p_0) \lambda_{\alpha}}{6\sqrt{n}} \nonumber \\
&- \frac{\lambda_{\alpha}^{3/2} (1 + 2p_0 - 2p_0^2)}{2n\sqrt{p_0(1-p_0)}} +O(n^{-3/2}) \label{V+_gothic}  \\
\mathfrak{V}_{n,-}^{(\ep,m)}(p_0; \lambda_{\alpha}) &:= -\sqrt{\lambda_{\alpha}p_0(1-p_0)} + \frac{(1 - 2p_0) \lambda_{\alpha}}{6\sqrt{n}} \nonumber \\
&+ \frac{\lambda_{\alpha}^{3/2} (1 + 2p_0 - 2p_0^2)}{2n\sqrt{p_0(1-p_0)}} +O(n^{-3/2}) \label{V-_gothic} \ .
\end{align}
Therefore, it is evident from the previous identities that the proof of Theorem \ref{teo31} can be carried out after providing an explicit expansion of Berry-Esseen type of  $\G_n$, 
which is contained in the following
\begin{proposition} \label{prp:BE}
Let $\Phi$ denote the cumulative distribution function of the standard Normal distribution, and let, for any $m \in \N$, 
$$
S_{2m+1}(x) := 2\sum_{k=1}^{\infty} \frac{\sin(2\pi kx)}{(2\pi k)^{2m+1}} \quad \text{and} \quad S_{2m}(x) := 2\sum_{k=1}^{\infty} \frac{\cos(2\pi kx)}{(2\pi k)^{2m}}\ .
$$ 
In addition, for any $j \in \N$, let $Q_j$ denote the function
$$
Q_j(x) := -\frac{1}{\sqrt{2\pi}} e^{-\frac{x^2}{2}} \sum_{(\ast)} H_{j+2s-1}(x) \prod_{m=1}^j \frac{1}{k_m!} \left(\frac{\gamma_m}{(m+2)! p^{\frac{m}{2}+1} (1-p)^{\frac{m}{2}+1}}\right)^{k_m}
$$
where $(\ast)$ means that the sum ranges over all the $j^{th}$-uples $(k_1, \dots, k_j) \in \N_0^j$ such that $k_1 + 2k_2 + \dots + jk_j= j$, with $s = k_1 + k_2 + \dots + k_j$, $H_m$ denotes the $m^{th}$ Chebyshev-Hermite polynomial, i.e. 
$$
H_m(x) := (-1)^m e^{\frac{x^2}{2}} \left(\frac{\ddr^m}{\ddr x^m} e^{-\frac{x^2}{2}} \right)
$$
and $\gamma_m$ stands for the $m^{th}$ cumulant of the Bernoulli distribution of parameter $p$. Then, for any $x \in \R$, there holds
\begin{align*}
\G_n(x) &= \Phi\left(\frac{x}{\sqrt{p_0(1-p_0)}}\right) + \sum_{j=1}^2 \left(\frac{1}{\sqrt{n}}\right)^j Q_j\left(\frac{x}{\sqrt{p_0(1-p_0)}}\right) \\
&+ \frac{\mathsf{Var}(L)}{2np_0(1-p_0)} \Phi''\left(\frac{x}{\sqrt{p_0(1-p_0)}}\right) \\
&+ \frac{1}{\sqrt{np_0(1-p_0)}} S_1(np_0 + \sqrt{n}x) \left[ \Phi'\left(\frac{x}{\sqrt{p_0(1-p_0)}}\right) + \frac{1}{\sqrt{n}} Q'_1\left(\frac{x}{\sqrt{p_0(1-p_0)}}\right) \right] \\
&+ \frac{1}{np_0(1-p_0)} S_2(np_0 + \sqrt{n}x) \Phi''\left(\frac{x}{\sqrt{p_0(1-p_0)}}\right) + R_n(x)
\end{align*}
with the remainder term $R_n$ satisfying an inequality like 
$$
\sup_{x \in \R} \{ |R_n(x)| (1+|x|)^5\} \leq \frac{C(p_0, m, \varepsilon)}{n^{3/2}}
$$ 
for some suitable constant $C(p_0, m, \varepsilon)$ independent of $n$. 
\end{proposition}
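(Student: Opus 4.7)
The plan is to exploit the convolution identity $Z_n = (A_n - np_0)/\sqrt{n} + L/\sqrt{n}$, which reduces the problem to a classical lattice Edgeworth expansion for the binomial sum $A_n$ followed by an average over the symmetric integer-valued perturbation $L$. First, by independence of $A_n$ and $L$,
\begin{equation*}
\G_n(x) = \sum_{l=-m}^{m} \mathrm{Pr}[L=l]\; H_n\!\left(x - \tfrac{l}{\sqrt{n}}\right), \qquad H_n(y) := \mathrm{Pr}\!\left[\tfrac{A_n - np_0}{\sqrt{n}} \leq y\right].
\end{equation*}

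Next, I would invoke the classical Edgeworth expansion for a standardized sum of i.i.d.\ lattice random variables (in the form available in \citet{Pet(75)}, Ch.~5), applied to $A_n$, which is a sum of $n$ i.i.d.\ Bernoulli$(p_0)$ variables with span $1$. This produces an expansion for $H_n(y)$ involving exactly the Chebyshev--Hermite polynomials $Q_1, Q_2$ and the periodic Fourier factors $S_1, S_2$ of the statement, together with a remainder bounded uniformly by $C(p_0)\,n^{-3/2}$ in the weighted sup-norm $(1+|y|)^5$.

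Evaluating this expansion at $y = x - l/\sqrt{n}$ will hinge on two decisive observations. First, since $l \in \Z$ and each $S_j$ has period $1$,
\begin{equation*}
S_j\!\bigl(np_0 + \sqrt{n}\,x - l\bigr) = S_j\!\bigl(np_0 + \sqrt{n}\,x\bigr),
\end{equation*}
so the oscillatory factors are independent of $l$ and can be pulled out of the $l$-sum. Second, each smooth factor $\Phi^{(k)}, Q_j, Q_j'$ will be Taylor-expanded in powers of $l/\sqrt{np_0(1-p_0)}$ around the base point $x/\sqrt{p_0(1-p_0)}$, to the order needed to absorb the remainders into $O(n^{-3/2})$; this is legitimate since $|l|\leq m$ is bounded.

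Summing over $l$ with weights $\mathrm{Pr}[L=l]$, the symmetry of the truncated Laplace distribution will give $\E[L]=\E[L^3]=0$, so all odd Taylor terms drop out. The only new surviving contribution of order $n^{-1}$ is produced by the second-order Taylor expansion of the leading $\Phi$-term and equals
\begin{equation*}
\tfrac{\mathsf{Var}(L)}{2 n p_0(1-p_0)}\,\Phi''\!\left(\tfrac{x}{\sqrt{p_0(1-p_0)}}\right),
\end{equation*}
matching the claim; all remaining $\mathsf{Var}(L)$-corrections to the $Q_j/n^{j/2}$ and $S_j/n^{j/2}$ terms are of order at most $n^{-3/2}$ and are absorbed into $R_n(x)$. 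The weighted bound $\sup_x(1+|x|)^5|R_n(x)|\leq C(p_0,m,\varepsilon)\,n^{-3/2}$ will then follow by combining the uniform bound on the binomial Edgeworth remainder with the boundedness of $|l|\leq m$ and the polynomial decay of the Chebyshev--Hermite polynomials, the dependence on $(m,\varepsilon)$ entering the constant only through the even moments $\E[L^{2k}]$. The hard part will be locating (or deriving) a lattice Edgeworth expansion for $A_n$ that is sharp enough in the weighted sup-norm and that displays the Fourier functions $S_1, S_2$ with precisely the coefficients of the statement; once Step 2 is in hand, the rest reduces to the periodicity identity above and to routine Taylor bookkeeping.
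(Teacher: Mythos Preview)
Your proposal is correct and follows essentially the same route as the paper: the paper also writes $\G_n$ as the Laplace-weighted average of the binomial distribution function shifted by $l/\sqrt n$, invokes the lattice Edgeworth expansion from Petrov (Theorem~6, Chapter~VI) for the standardized binomial, uses the $1$-periodicity of the $S_j$ to make the oscillatory factors independent of $l$, Taylor-expands the smooth pieces $U_5^{(k)}$, and exploits the symmetry of $L$ so that only the $\mathsf{Var}(L)$ second-order term survives at order $n^{-1}$. Your identification of the ``hard part'' is accurate---the paper simply cites Petrov (and \citet{DoFa(20)}) for the required weighted-sup-norm lattice expansion rather than rederiving it.
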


\begin{proof}
After denoting by $\F_n$ the distribution function of $W_n$, invoke well-known results (see, e.g., Theorem 6 in Chapter VI of Petrov \citet{Pet(75)}, or \citet{DoFa(20)}) 
to write (in the same notation adopted by Petrov)
\begin{align*}
\F_n(y) &= U_5(y) + \frac{1}{\sqrt{np_0(1-p_0)}} S_1(np_0 + y\sqrt{np_0(1-p_0)}) U'_5(y) \\
&+ \frac{1}{np_0(1-p_0)} S_2(np_0 + y\sqrt{np_0(1-p_0)}) U''_5(y) + T_n(y)
\end{align*}
where $U_5(y) := \Phi(y) + \sum_{j=1}^3 \left(\frac{1}{\sqrt{n}}\right)^j Q_j(y)$ and 
$$
|T_n(y)| \leq \frac{J(p_0)}{n^{3/2}(1+|y|)^5}
$$
for some positive constant $J(p_0)$. Now, in view of the definition of $V_n$, its distribution function $\G_n$ is given by
$$
\G_n(x) = \sum_{l = -m}^m \frac{1}{c_{\varepsilon,m}} e^{-\epsilon |l|}\ \F_n\left(\frac{x - l/\sqrt{n}}{\sqrt{p_0(1-p_0)}} \right) \ .
$$
To obtain an expansion for $\G_n$, notice that, for $y = \frac{x - l/\sqrt{n}}{\sqrt{p_0(1-p_0)}}$, there holds
$$
S_j(np_0 + y\sqrt{np_0(1-p_0)}) = S_j(np_0 + x\sqrt{n} - l) = S_j(np_0 + x\sqrt{n})
$$
thanks to the periodic character of $S_j$. Hence, the effect of putting $y = \frac{x - l/\sqrt{n}}{\sqrt{p_0(1-p_0)}}$ is mainly observed in the terms $U_5$, $U'_5$, and $U''_5$: by resorting to the Taylor expansion, for any $k \in \N_0$, write
\begin{align*}
U^{(k)}_5\left(\frac{x - l/\sqrt{n}}{\sqrt{p_0(1-p_0)}}\right) &= U^{(k)}_5\left(\frac{x}{\sqrt{p_0(1-p_0)}}\right) -\frac{l}{\sqrt{np_0(1-p_0)}} U^{(k+1)}_5\left(\frac{x}{\sqrt{p_0(1-p_0)}}\right) \\
&+ \frac{l^2}{2np_0(1-p_0)} U^{(k+2)}_5\left(\frac{x}{\sqrt{p_0(1-p_0)}}\right) + T_{n,k}(x)
\end{align*}
where the remainder term satisfies
$$
|T_{n,k}(x)| \leq \frac{J_k(p_0) |l|^3}{n^{3/2}(1+|x|)^5}
$$
for some positive constant $J_k(p_0)$. Thus, exploiting the symmetry of the Laplace distribution, obtain
\begin{align*}
\sum_{l = -m}^m \frac{1}{c_{\varepsilon,m}} e^{-\epsilon |l|}\ U^{(k)}_5\left(\frac{x - l/\sqrt{n}}{\sqrt{p_0(1-p_0)}}\right) &= U^{(k)}_5\left(\frac{x}{\sqrt{p_0(1-p_0)}}\right) \\
&+ \frac{\mathsf{Var}(L)}{2np_0(1-p_0)} U^{(k+2)}_5\left(\frac{x}{\sqrt{p_0(1-p_0)}}\right) + T_{n,k}^{\ast}(x)
\end{align*}
where the remainder term satisfies
$$
|T_{n,k}^{\ast}(x)| \leq \frac{C_k(p_0, m, \varepsilon)}{n^{3/2}(1+|x|)^5}
$$
for some suitable constant $C_k(p_0, m, \varepsilon)$ independent of $n$. Finally, gathering the previous identities, conclude that
\begin{align*}
\G_n(x) &= U_5\left(\frac{x}{\sqrt{p_0(1-p_0)}}\right) + \frac{\mathsf{Var}(L)}{2np_0(1-p_0)} U^{''}_5\left(\frac{x}{\sqrt{p_0(1-p_0)}}\right) + T_{n,0}^{\ast}(x) \\
& + \frac{1}{\sqrt{np_0(1-p_0)}} S_1(np_0 + x\sqrt{n}) \times \Big[ U^{'}_5\left(\frac{x}{\sqrt{p_0(1-p_0)}}\right) \\
&+ \frac{\mathsf{Var}(L)}{2np_0(1-p_0)} U^{'''}_5\left(\frac{x}{\sqrt{p_0(1-p_0)}}\right) + T_{n,1}^{\ast}(x) \Big] \\
&+ \frac{1}{np_0(1-p_0)} S_2(np_0 + x\sqrt{n}) \times  \Big[ U^{''}_5\left(\frac{x}{\sqrt{p_0(1-p_0)}}\right) \\
&+ \frac{\mathsf{Var}(L)}{2np_0(1-p_0)} U^{''''}_5\left(\frac{x}{\sqrt{p_0(1-p_0)}}\right) + T_{n,2}^{\ast}(x) \Big] \\
&+ \sum_{l = -m}^m \frac{1}{c_{\varepsilon,m}} e^{-\epsilon |l|}\ T_n\left(\frac{x - l/\sqrt{n}}{\sqrt{p_0(1-p_0)}} \right) \ .
\end{align*}
Incorporating all the terms of type $O(n^{-3/2})$ in the remainder yields the thesis to be proved.
\end{proof}

The way is now paved to complete the proof of Theorem \ref{teo31}. Indeed, \eqref{exp_t} follows by combining the thesis of Proposition \ref{prp:BE} with \eqref{FDR_true} and \eqref{Z+_gothic}-\eqref{Z-_gothic}.
To see this, we start from the analysis of the quantity
\begin{align*}
&\Phi\left(\frac{\mathfrak{Z}_{n,+}^{(\ep,m)}(p_0; t)}{\sqrt{p_0(1-p_0)}}\right) + \sum_{j=1}^2 \left(\frac{1}{\sqrt{n}}\right)^j Q_j\left(\frac{\mathfrak{Z}_{n,+}^{(\ep,m)}(p_0; t)}{\sqrt{p_0(1-p_0)}}\right) \\
&+ \frac{\mathsf{Var}(L)}{2np_0(1-p_0)} \Phi''\left(\frac{\mathfrak{Z}_{n,+}^{(\ep,m)}(p_0; t)}{\sqrt{p_0(1-p_0)}}\right) - \Phi\left(\frac{\mathfrak{Z}_{n,-}^{(\ep,m)}(p_0; t)}{\sqrt{p_0(1-p_0)}}\right) \\
&- \sum_{j=1}^2 \left(\frac{1}{\sqrt{n}}\right)^j Q_j\left(\frac{\mathfrak{Z}_{n,-}^{(\ep,m)}(p_0; t)}{\sqrt{p_0(1-p_0)}}\right) - \frac{\mathsf{Var}(L)}{2np_0(1-p_0)} 
\Phi''\left(\frac{\mathfrak{Z}_{n,-}^{(\ep,m)}(p_0; t)}{\sqrt{p_0(1-p_0)}}\right)
\end{align*}
which represents the ``regular'' part of the expression of \eqref{FDR_true}. Taking account of \eqref{Z+_gothic}-\eqref{Z-_gothic}, a straightforward Taylor expansion shows that the above quantity is equal to
\begin{align*}
&\Phi(\sqrt{t}) + \Phi'(\sqrt{t}) \left[\frac{(1 - 2p_0) t}{6\sqrt{np_0(1-p_0)}} + \frac{\sqrt{t} }{2np_0(1-p_0)} \left\{\mathsf{Var}(L) - t(1 + 2p_0 - 2p_0^2)\right\}\right] \\
&+ \frac 12 \Phi''(\sqrt{t}) \frac{(1 - 2p_0)^2 t^2}{36np_0(1-p_0)} + Q_1(\sqrt{t}) \frac{1}{\sqrt{n}} + Q'_1(\sqrt{t}) \frac{(1 - 2p_0) t}{6n\sqrt{p_0(1-p_0)}} + Q_2(\sqrt{t}) \frac{1}{n}\\
&+ \frac{\mathsf{Var}(L)}{2np_0(1-p_0)} \Phi''(\sqrt{t}) - \Phi(-\sqrt{t}) - \Phi'(-\sqrt{t}) \left[\frac{(1 - 2p_0) t}{6\sqrt{np_0(1-p_0)}} \right. \\
&\left. - \frac{\sqrt{t} }{2np_0(1-p_0)} \left\{\mathsf{Var}(L) - t(1 + 2p_0 - 2p_0^2)\right\}\right] - \frac 12 \Phi''(-\sqrt{t}) \frac{(1 - 2p_0)^2 t^2}{36np_0(1-p_0)} \\
&- Q_1(-\sqrt{t}) \frac{1}{\sqrt{n}} - Q'_1(-\sqrt{t}) \frac{(1 - 2p_0) t}{6n\sqrt{p_0(1-p_0)}} - Q_2(-\sqrt{t}) \frac{1}{n}
- \frac{\mathsf{Var}(L)}{2np_0(1-p_0)} \Phi''(-\sqrt{t}) \ .
\end{align*}
Exploiting that $x\Phi'(x) + \Phi''(x) = 0$, we show that the terms containing $\mathsf{Var}(L)$ cancel out. Moreover, $\Phi(\sqrt{t}) - \Phi(-\sqrt{t}) = \K(t)$ and $\Phi'(\sqrt{t}) = \Phi'(-\sqrt{t})$
hold for any $t \geq 0$. These considerations lead to the following equivalent reformulation of the above term
$$
\K(t) - \Phi'(\sqrt{t}) \frac{t^{3/2} (1 + 2p_0 - 2p_0^2)}{np_0(1-p_0)} + \Phi''(\sqrt{t}) \frac{(1 - 2p_0)^2 t^2}{36np_0(1-p_0)} + Q_1(\sqrt{t}) \frac{2}{\sqrt{n}}\ .
$$
This is the main part of the proof. Then, we deal with the ``irregular'' part of the expression of \eqref{FDR_true}. The argument is essentially the same as above, even if we have now to
take care of the expressions
\begin{align*}
S_i\left(np_0 + \sqrt{n}\mathfrak{Z}_{n,\pm}^{(\ep,m)}(p_0; t)\right) &= S_i\left(np_0 \pm \sqrt{n t p_0(1-p_0)} + \frac{(1 - 2p_0) t}{6} \right. \nonumber \\
&\left. \pm \frac{\sqrt{t} }{2\sqrt{np_0(1-p_0)}} \left\{\mathsf{Var}(L) - t(1 + 2p_0 - 2p_0^2)\right\}\right)
\end{align*}
for $i=1,2$, which are not smooth functions. Anyway, for any fixed $t$ such that the quantity $np_0 \pm \sqrt{n t p_0(1-p_0)} + \frac{(1 - 2p_0) t}{6}$
is not a singularity of $S_i$, we can actually apply the Taylor formula, since the $S_i$ are smooth away from their singularities. With this trick we show that the ``irregular'' part of \eqref{FDR_true}
provides the quantity $\frac{1}{n}I_n(t;p_0)$, where
\begin{align*}
I_n(t;p_0) := &\frac{1}{\sqrt{p_0(1-p_0)}} S_1\left(np_0 + \sqrt{n t p_0(1-p_0)} + \frac{(1 - 2p_0) t}{6}\right) \times \\
&\times \left[ \Phi'(\sqrt{t})\frac{(1 - 2p_0) t}{6\sqrt{p_0(1-p_0)}} + Q'_1(\sqrt{t}) \right] \\
&+ \frac{1}{p_0(1-p_0)} S_2\left(np_0 + \sqrt{n t p_0(1-p_0)} + \frac{(1 - 2p_0) t}{6}\right) \Phi''(\sqrt{t}) \\
&-\frac{1}{\sqrt{p_0(1-p_0)}} S_1\left(np_0 - \sqrt{n t p_0(1-p_0)} + \frac{(1 - 2p_0) t}{6}\right) \times \\
&\times \left[ \Phi'(-\sqrt{t})\frac{(1 - 2p_0) t}{6\sqrt{p_0(1-p_0)}} + Q'_1(-\sqrt{t}) \right] \\
&- \frac{1}{p_0(1-p_0)} S_2\left(np_0 - \sqrt{n t p_0(1-p_0)} + \frac{(1 - 2p_0) t}{6}\right) \Phi''(-\sqrt{t})\ .
\end{align*}
The only term which is not caught by this technique is 
$$
\frac{\Phi'(\sqrt{t})}{\sqrt{np_0(1-p_0)}} \left[S_1(np_0 + \sqrt{n}\mathfrak{Z}_{n,+}^{(\ep,m)}(p_0; t)) - S_1(np_0 + \sqrt{n}\mathfrak{Z}_{n,-}^{(\ep,m)}(p_0; t)) \right]
$$
which corresponds to the terms $\frac{c_{\ast}(t;p_{0};\epsilon,m)}{\sqrt{2\pi n}} e^{-t/2}$ in \eqref{exp_t}. Therefore, we can finally set
\begin{align*}
c_1(t; p_0) &:= 2Q_1(\sqrt{t}) \\
c_2(t; p_0) &:= - \Phi'(\sqrt{t}) \frac{t^{3/2} (1 + 2p_0 - 2p_0^2)}{p_0(1-p_0)} + \Phi''(\sqrt{t}) \frac{(1 - 2p_0)^2 t^2}{36p_0(1-p_0)} + I_n(t;p_0)\ . 
\end{align*}

Then, we pass to the analysis of the quantity
\begin{align*}
&\Phi\left(\frac{\mathfrak{V}_{n,+}^{(\ep,m)}(p_0; t)}{\sqrt{p_0(1-p_0)}}\right) + \sum_{j=1}^2 \left(\frac{1}{\sqrt{n}}\right)^j Q_j\left(\frac{\mathfrak{V}_{n,+}^{(\ep,m)}(p_0; t)}{\sqrt{p_0(1-p_0)}}\right) \\
&+ \frac{\mathsf{Var}(L)}{2np_0(1-p_0)} \Phi''\left(\frac{\mathfrak{V}_{n,+}^{(\ep,m)}(p_0; t)}{\sqrt{p_0(1-p_0)}}\right) - \Phi\left(\frac{\mathfrak{V}_{n,-}^{(\ep,m)}(p_0; t)}{\sqrt{p_0(1-p_0)}}\right) \\
&- \sum_{j=1}^2 \left(\frac{1}{\sqrt{n}}\right)^j Q_j\left(\frac{\mathfrak{V}_{n,-}^{(\ep,m)}(p_0; t)}{\sqrt{p_0(1-p_0)}}\right) - \frac{\mathsf{Var}(L)}{2np_0(1-p_0)} 
\Phi''\left(\frac{\mathfrak{V}_{n,-}^{(\ep,m)}(p_0; t)}{\sqrt{p_0(1-p_0)}}\right)
\end{align*}
which represents the ``regular'' part of the expression of \eqref{FDR_naive}. Again expanding by the Taylor formula, we get the equivalent expression 
\begin{align*}
&\K(t) - \Phi'(\sqrt{t}) \frac{t^{3/2} (1 + 2p_0 - 2p_0^2)}{np_0(1-p_0)} + \Phi''(\sqrt{t}) \frac{(1 - 2p_0)^2 t^2}{36np_0(1-p_0)} + Q_1(\sqrt{t}) \frac{2}{\sqrt{n}}\\
&+ \frac{\mathsf{Var}(L)}{np_0(1-p_0)} \Phi''(\sqrt{t}) \ . 
\end{align*}
Since $\Phi''(\sqrt{t}) = -\left(\frac{t\text{e}^{-t}}{2\pi}\right)^{1/2}$ and the ``irregular'' part contained in $I_n(t;p_0)$ is the same as above, we conclude the validity of \eqref{exp_n}, proving the theorem.


\subsection{Proof of Theorem \ref{teo33} for $k=2$}

We start again from \eqref{ApiuL} where the random variables $A_{n}$ and $L$, defined on the probability space $(\Omega,\mathscr{F},\text{Pr})$, are independent, $A_n \sim Bin(n,p_1)$ and $L$ has the Laplace distribution \eqref{Laplace}. After fixing $\lambda_{\alpha}$ such that $\K(\lambda_{\alpha}) = 1-\alpha$, according to Theorem \ref{teo31}, we have that
$$
1-\beta_n(p_1;\alpha) = \mathrm{Pr}[\Lambda_{n,T}(p_0) \leq \lambda_{\alpha}]\ . 
$$
First, for $\delta \in (0, 1/2)$, we define the event $E_n(\delta)\subset\Omega$ as
\begin{equation}\label{event_delta}
E_n(\delta) :=\left\{\omega\in\Omega\text{ $:$ } \frac{A_{n}(\omega)}{n} \in [\delta, 1-\delta] \right\} \ .
\end{equation}
Then, recalling that 
$$
\mathcal D_{KL}(p_0\ \|\ p_1) := p_0\log\left(\frac{p_0}{p_1}\right) + (1-p_0) \log\left(\frac{1-p_0}{1-p_1}\right)
$$
is a fixed quantity, we resort once again on a large deviation argument to choose $\delta$ sufficiently small so that
\begin{equation}\label{bound_largedev}
\mathrm{Pr}[E_n(\delta)^c] \leq e^{-nC(\delta)}
\end{equation}
holds for some $C(\delta) > \mathcal D_{KL}(p_0\ \|\ p_1)$. In this way,  we can write
$$
1-\beta_n(p_1;\alpha) = \mathrm{Pr}[\Lambda_{n,T}(p_0) \leq \lambda_{\alpha}, E_n(\delta)] + \mathrm{Pr}[\Lambda_{n,T}(p_0) \leq \lambda_{\alpha}, E_n(\delta)^c]
$$
with
$$
\mathrm{Pr}[\Lambda_{n,T}(p_0) \leq \lambda_{\alpha}, E_n(\delta)^c] = o\left(e^{-n\mathcal D_{KL}(p_0\ \|\ p_1)}\right)\ .
$$
Then, for $M>0$, we define another event $E'_n(M)\subset\Omega$ as
\begin{equation}\label{event_M}
E'_n(M) :=\left\{\omega\in\Omega\text{ $:$ } \Big|\frac{A_{n}(\omega)}{n} - p_0\Big| \leq \frac{M}{\sqrt{n}}\right\} 
\end{equation}
and we write
\begin{align*}
\mathrm{Pr}[\Lambda_{n,T}(p_0) \leq \lambda_{\alpha}, E_n(\delta)] &= \mathrm{Pr}[\Lambda_{n,T}(p_0) \leq \lambda_{\alpha}, E_n(\delta) \cap E'_n(M)] \\
&+\mathrm{Pr}[\Lambda_{n,T}(p_0) \leq \lambda_{\alpha}, E_n(\delta) \cap E'_n(M)^c]\ .
\end{align*}
The advantage of such a preliminary step is that, on $E_n(\delta) \cap E'_n(M)$, the assumptions of all the Lemmata stated in the previous subsection are fulfilled and, by resorting to 
\eqref{event_reject_H0_Z}, we can write
\begin{align*}
&\left\{\Lambda_{n,T}(p_0) \leq \lambda_{\alpha}, E_n(\delta) \cap E'_n(M)\right\} \\
&= \left\{\mathfrak{Z}_{n,-}^{(\ep,m)}(p_0; \lambda_{\alpha}) \leq Z_n \leq \mathfrak{Z}_{n,+}^{(\ep,m)}(p_0; \lambda_{\alpha}), E_n(\delta) \cap E'_n(M)\right\}
\end{align*}
where $Z_n$ is the same random variable as in \eqref{eq:Vn}. Moreover, if $M > \lambda_{\alpha}$, as we will choose, we have eventually that
$$
\left\{\Lambda_{n,T}(p_0) \leq \lambda_{\alpha}, E_n(\delta) \cap E'_n(M)\right\} = \left\{\mathfrak{Z}_{n,-}^{(\ep,m)}(p_0; \lambda_{\alpha}) \leq Z_n \leq \mathfrak{Z}_{n,+}^{(\ep,m)}(p_0; \lambda_{\alpha}), 
E_n(\delta)\right\}\ .
$$
It remains to show that, for a suitable choice of $M$, we have
\begin{equation} \label{emptyset}
\left\{\Lambda_{n,T}(p_0) \leq \lambda_{\alpha}, E_n(\delta) \cap E'_n(M)^c\right\} = \emptyset
\end{equation}
eventually, yielding that 
\begin{equation} \label{Power_1}
1-\beta_n(p_1;\alpha) = \mathrm{Pr}[\mathfrak{Z}_{n,-}^{(\ep,m)}(p_0; \lambda_{\alpha}) \leq Z_n \leq \mathfrak{Z}_{n,+}^{(\ep,m)}(p_0; \lambda_{\alpha})] + o\left(e^{-n\mathcal D_{KL}(p_0\ \|\ p_1)}\right)\ .
\end{equation}
The proof of \eqref{emptyset}, along with the proper choice of $M$, is contained in the following 
\begin{lem} \label{lem:emptyset}
If $b/n \in [\delta, 1-\delta]$ and $|b/n - p_0| > \sqrt{(\lambda_{\alpha} + S(p_0, \delta; \epsilon, m))/n}$, with 
$$
S(p_0, \delta; \epsilon, m) := 2\log\left(\max_{x \in [\delta, 1-\delta]} \E\left[ \exp\left\{ \left(\frac{x(1-p_0)}{(1-x)p_0}\right)L \right\}\right]\right)\ ,
$$
then, eventually, there holds
$$
2\log\left(\frac{L_{n,T}(\hat{p}_{n,T}; b)}{L_{n,T}(p_0; b)}\right) > \lambda_{\alpha}\ .
$$
\end{lem}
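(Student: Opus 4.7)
The plan is to lower-bound the likelihood ratio by replacing the unknown MLE $\hat p_{n,T}$ with the explicit Multinomial MLE $\tilde p := b/n$. Since $\hat p_{n,T}$ maximizes $L_{n,T}(\cdot\,;b)$,
\begin{displaymath}
2\log\frac{L_{n,T}(\hat p_{n,T};b)}{L_{n,T}(p_0;b)} \;\geq\; 2\log\frac{L_{n,T}(b/n;b)}{L_{n,T}(p_0;b)},
\end{displaymath}
and the decomposition \eqref{LikTrueDec} splits the right-hand side into a Multinomial piece $2\log(L_n^{(0)}(b/n;b)/L_n^{(0)}(p_0;b))$ and a perturbation piece $2\log(H_{n,T}^{(\varepsilon,m)}(b/n;b)/H_{n,T}^{(\varepsilon,m)}(p_0;b))$. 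A direct computation using \eqref{Definition_Ln0} identifies the Multinomial piece with $2n\,\mathcal D_{KL}(b/n\,\|\,p_0)$, where $\mathcal D_{KL}$ is the Bernoulli Kullback-Leibler divergence.

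For the Multinomial piece the plan is to invoke Pinsker's inequality (or the integral representation $\mathcal D_{KL}(p\,\|\,q) = \int_q^p (p-t)/(t(1-t))\,dt$ with $t(1-t)\leq 1/4$) to obtain
\begin{displaymath}
2n\,\mathcal D_{KL}(b/n\,\|\,p_0) \;\geq\; 4n(b/n - p_0)^2 \;>\; 4\bigl(\lambda_\alpha + S(p_0,\delta;\varepsilon,m)\bigr)
\end{displaymath}
directly from the hypothesis on $|b/n-p_0|$. For the perturbation piece I would exploit the elementary asymptotic
\begin{displaymath}
\rho(n,b,l)\,\Bigl(\tfrac{1-b/n}{b/n}\Bigr)^l \;\longrightarrow\; 1 \qquad (n\to\infty),
\end{displaymath}
uniform in $b/n\in[\delta,1-\delta]$ and $|l|\leq m$, which is immediate from \eqref{Definition_rho_nbl}. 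Plugging this into \eqref{Definition_HnT} yields both $H_{n,T}^{(\varepsilon,m)}(b/n;b)=1+o(1)$ and
\begin{displaymath}
H_{n,T}^{(\varepsilon,m)}(p_0;b) \;\leq\; (1+o(1))\,\frac{1}{c_{\varepsilon,m}}\sum_{l=-m}^m e^{-\varepsilon|l|}\!\left(\frac{(b/n)(1-p_0)}{(1-b/n)p_0}\right)^l,
\end{displaymath}
and the right-hand side is, by the very definition of $S(p_0,\delta;\varepsilon,m)$, bounded above by $\exp\{S(p_0,\delta;\varepsilon,m)/2\}\,(1+o(1))$ uniformly for $b/n\in[\delta,1-\delta]$. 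Hence the perturbation piece is bounded below by $-S(p_0,\delta;\varepsilon,m) - o(1)$, and summing the two bounds gives a quantity strictly larger than $4\lambda_\alpha + 3S - o(1) > \lambda_\alpha$ for all $n$ beyond an explicit threshold.

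The main obstacle is quantifying the uniformity of the $\rho$-asymptotic over $b/n\in[\delta,1-\delta]$ sharply enough to absorb the $o(1)$ corrections into an explicit eventual-$n$ threshold that is independent of the particular $b$ satisfying the hypothesis; unlike in Lemma \ref{lemma_rho}, where $b/n = \xi + \epsilon_n$ with $\xi$ fixed, here $b/n$ is allowed to vary across a whole compact set, so the Stirling-type expansions must be redone with uniform remainder estimates on $[\delta,1-\delta]$. A minor ancillary point is matching the natural logarithmic argument produced by the computation with the argument appearing inside the expectation defining $S$, which is what dictates the particular form of the constant $S(p_0,\delta;\varepsilon,m)$.
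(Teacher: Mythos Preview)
Your approach is essentially identical to the paper's own proof: replace the MLE by $b/n$, split via \eqref{LikTrueDec}, lower-bound the Multinomial piece by a quadratic in $b/n-p_0$ (the paper uses the weaker $2\mathcal D_{KL}(p\|q)\ge (p-q)^2$ rather than Pinsker, but either suffices), and control the perturbation piece via $\rho(n,b,l)\bigl(\tfrac{1-b/n}{b/n}\bigr)^l\to 1$. The uniformity you flag as the main obstacle is not a real difficulty: for $b/n\in[\delta,1-\delta]$ and $|l|\le m$ one has $\rho(n,b,l)\bigl(\tfrac{1-b/n}{b/n}\bigr)^l=\prod_j\bigl(1+O(m/(n\delta))\bigr)$ with constants depending only on $(\delta,m)$, so the $o(1)$ is automatically uniform in $b$; and your ancillary remark about the form of the argument in $S$ is well-taken---the quantity that naturally emerges is $\E[r^L]$ with $r=\tfrac{x(1-p_0)}{(1-x)p_0}$ rather than $\E[e^{rL}]$, though any finite constant serves the lemma's purpose.
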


\begin{proof}
Since $L_{n,T}(\hat{p}_{n,T}; b) \geq L_{n,T}(b/n; b)$, by definition of MLE estimator, we recall \eqref{LikTrueDec} to write
$$
2\log\left(\frac{L_{n,T}(\hat{p}_{n,T}; b)}{L_{n,T}(p_0; b)}\right) \geq 2\log\left(\frac{L_n^{(0)}(b/n; b)}{L_n^{(0)}(p_0; b)}\right) + 2\log\left(\frac{H_{n,T}^{(\ep,m)}(b/n; b)}{H_{n,T}^{(\ep,m)}(p_0; b)}\right)\ .
$$ 
Then, recalling \eqref{Definition_Ln0}, we have
$$
2\log\left(\frac{L_n^{(0)}(b/n; b)}{L_n^{(0)}(p_0; b)}\right) = 2n\mathcal D_{KL}(b/n\ \|\ p_0) \geq n(b/n - p_0)^2 > \lambda_{\alpha} + S(p_0, \delta; \epsilon, m)\ .
$$
Finally, exploiting that $\rho(n,b,l) \sim \left(\frac{b/n}{1-b/n}\right)^l$ and recalling \eqref{Definition_HnT}, we conclude that
$$
2\log\left(\frac{H_{n,T}^{(\ep,m)}(b/n; b)}{H_{n,T}^{(\ep,m)}(p_0; b)}\right) \geq - S(p_0, \delta; \epsilon, m) 
$$
holds eventually, completing the proof. 
\end{proof}

At this stage, we come back to \eqref{Power_1} by writing
$$
Z_n = \sqrt{p_1(1-p_1)} V_n + \frac{L}{\sqrt{n}} - \sqrt{n} \Delta
$$
with 
$$
V_n := \frac{A_n - np_1}{\sqrt{n p_1(1-p_1)}}
$$
and $\Delta := p_0 - p_1$. Thus, the event considered in \eqref{Power_1} can be rewritten in terms of the random variable $V_n$ as follows
\begin{align*}
&\left\{\mathfrak{Z}_{n,-}^{(\ep,m)}(p_0; \lambda_{\alpha}) \leq Z_n \leq \mathfrak{Z}_{n,+}^{(\ep,m)}(p_0; \lambda_{\alpha})\right\} \\
&= \left\{- \frac{L}{\sqrt{n}} + \sqrt{n} \Delta + \mathfrak{Z}_{n,-}^{(\ep,m)}(p_0; \lambda_{\alpha}) \leq \sqrt{p_1(1-p_1)} V_n \leq -\frac{L}{\sqrt{n}} + \sqrt{n} \Delta + \mathfrak{Z}_{n,+}^{(\ep,m)}(p_0; 
\lambda_{\alpha})\right\}\ .
\end{align*}
We now introduce the distribution function $\Hg_n$ of $V_n$, and we notice that \eqref{Power_1} becomes
\begin{align} 
1-\beta_n(p_1;\alpha) &= \frac{1}{c_{\varepsilon,m}} \sum_{l=-m}^m  e^{-\varepsilon |l|} \left[\Hg_n\left(\frac{\sqrt{n} \Delta + l/{\sqrt{n}} + \mathfrak{Z}_{n,+}^{(\ep,m)}(p_0; \lambda_{\alpha})}{\sqrt{p_1(1-p_1)} } \right) 
\right. \nonumber \\
& \left. - \Hg_n\left(\frac{\sqrt{n} \Delta + l/{\sqrt{n}} + \mathfrak{Z}_{n,-}^{(\ep,m)}(p_0; \lambda_{\alpha})}{\sqrt{p_1(1-p_1)} } \right)\right]
+o\left(e^{-n\mathcal D_{KL}(p_0\ \|\ p_1)}\right)\ . \label{Power_2}
\end{align}
At this stage, we are in a position to apply Theorem 10 in Chapter VIII of \citet{Pet(75)}. Supposing, for instance that $\Delta < 0$, we have that
\begin{align} 
&\Hg_n\left(\frac{\sqrt{n} \Delta + l/{\sqrt{n}} + \mathfrak{Z}_{n,\pm}^{(\ep,m)}(p_0; \lambda_{\alpha})}{\sqrt{p_1(1-p_1)} } \right) \sim 
\Phi\left(\frac{\sqrt{n} \Delta + l/{\sqrt{n}} + \mathfrak{Z}_{n,\pm}^{(\ep,m)}(p_0; \lambda_{\alpha})}{\sqrt{p_1(1-p_1)} } \right) \times \nonumber \\
&\times \exp\left\{ \frac{[\sqrt{n} \Delta + l/{\sqrt{n}} + \mathfrak{Z}_{n,\pm}^{(\ep,m)}(p_0; \lambda_{\alpha})]^3}{\sqrt{n} [p_1(1-p_1)]^{3/2}}
\mathcal L\left(\frac{\Delta + l/n + \mathfrak{Z}_{n,\pm}^{(\ep,m)}(p_0; \lambda_{\alpha})/\sqrt{n}}{\sqrt{p_1(1-p_1)} } \right) \right\} \label{Power_3}
\end{align}
where $\mathcal L$ stands for the so-called Cram\'er-Petrov series relative to the Bernoulli distribution of parameter $p_1$. Thus, putting
$$
\mathfrak{T}_{n,\pm} := \frac{\Delta + l/n + \mathfrak{Z}_{n,\pm}^{(\ep,m)}(p_0; \lambda_{\alpha})/\sqrt{n}}{\sqrt{p_1(1-p_1)}},
$$
we proceed by resorting to the well-known Mills approximation to write
$$
\Phi\left(\frac{\sqrt{n} \Delta + l/{\sqrt{n}} + \mathfrak{Z}_{n,\pm}^{(\ep,m)}(p_0; \lambda_{\alpha})}{\sqrt{p_1(1-p_1)} } \right) \sim \frac{1}{\sqrt{n}} \exp\{-\frac{n}{2} \mathfrak{T}_{n,\pm}^2\}\ .
$$
Therefore, \eqref{Power_3} can be simplified as follows
$$
\Hg_n\left(\frac{\sqrt{n} \Delta + l/{\sqrt{n}} + \mathfrak{Z}_{n,\pm}^{(\ep,m)}(p_0; \lambda_{\alpha})}{\sqrt{p_1(1-p_1)} } \right) \sim \frac{1}{\sqrt{n}} \exp\left\{-n \left(\frac 12\mathfrak{T}_{n,\pm}^2
-\mathfrak{T}_{n,\pm}^3 \mathcal L(\mathfrak{T}_{n,\pm})\right)\right\}\ .
$$

At this stage, to complete the proof, we need a technical result that characterizes the expression $\frac 12 t^2 - t^3 \mathcal L(t)$ in an exponential model parametrized by the mean.
\begin{lem} \label{lem:exp_fam}
Let $(\mathbb X, \mathcal X)$ be a measurable space, endowed with a $\sigma$-finite reference measure $\mu$. Let $\mathfrak t : \mathbb X \to \mathbb R^d$ be a measurable map such that the set
$$
\Gamma := \left\{ y \in \mathbb R^d\ \Big|\ \int_{\mathbb X} e^{y \cdot \mathfrak t(x)} \mu(\ddr x) < +\infty\right\}
$$
is open and convex. Putting $M(y) := \log\left(\int_{\mathbb X} e^{y \cdot \mathfrak t(x)} \mu(\ddr x)\right)$, $V(y) :=\nabla M(y)$ and $\Theta := V(\Gamma)$, we have that $\Theta$ is open and $V$ is 
a smooth diffeomorphism between $\Gamma$ and $\Theta$. Moreover, the family of $\mu$-densities $\{f_{\theta}\}_{\theta \in \Theta}$ given by
$$
f_{\theta}(x) := \exp\{V^{-1}(\theta) \cdot \mathfrak t(x) - M(V^{-1}(\theta))\} \qquad (x \in \mathbb X)
$$ 
defines a regular exponential family parametrized by the mean. Moreover, we have
\begin{align*}
\mathcal D_{KL}(\tau\ \|\ \theta) &:= \int_{\mathbb X} \log\left(\frac{f_{\tau}(x) }{f_{\theta}(x) }\right) f_{\tau}(x) \mu(\ddr x) \\
&= \tau \cdot [V^{-1}(\tau) - V^{-1}(\theta)] - [M(V^{-1}(\theta)) - M(V^{-1}(\tau))] \\
&= \Psi_{\theta}^{\ast}(\tau) := \sup_{y\in \Gamma}\{\tau \cdot y - \Psi_{\theta}(\tau)\}
\end{align*}
where $\Psi_{\theta}(\tau) := \log\left(\int_{\mathbb X} e^{y \cdot \mathfrak t(x)} f_{\theta}(x)\mu(\ddr x)\right)$. For $d=1$, putting 
$$
\sigma^2(\theta) := \int_{\mathbb X} [\mathfrak t(x) - \theta]^2 f_{\theta}(x)\mu(\ddr x)\ , 
$$
we have that 
\begin{equation} \label{kullback_cramer}
\frac 12 \left(\frac{\tau - \theta}{\sigma(\theta)} \right)^2 - \left(\frac{\tau - \theta}{\sigma(\theta)} \right)^3 \mathcal L\left(\frac{\tau - \theta}{\sigma(\theta)} \right) = \mathcal D_{KL}(\tau\ \|\ \theta)
\end{equation}
for every $\tau, \theta \in \Theta$, where $\mathcal L$ denotes the Cram\'er-Petrov series relative to the distribution of $\mathfrak t(X) - \theta$, with $X \sim f_{\theta}$. Finally, for a generic dimension $d$, 
putting $L(\zb) := \Psi_{\boldsymbol \theta}(\zb) - \zb \cdot \boldsymbol \theta$, we have that
\begin{equation} \label{kullback_cramerK}
\hat{\zb} \cdot \nabla L(\hat{\zb}) = \mathcal D_{KL}(\boldsymbol \tau\ \|\ \boldsymbol\theta)
\end{equation}
if $\hat{\zb}$ is defined as the solution of the equation $\nabla L(\hat{\zb}) = \boldsymbol \tau - \boldsymbol\theta$.
\end{lem}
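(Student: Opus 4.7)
My plan is to build the proof around three standard ingredients: (i) smoothness/convexity of the log-MGF on an open convex domain, (ii) Legendre-Fenchel duality, and (iii) Petrov's expansion of the Cramér rate function via the Cramér-Petrov series. The first two assertions (openness of $\Theta$, smoothness of the diffeomorphism $V$, and identification of $\{f_\theta\}$ as a regular exponential family parametrized by the mean) are essentially preparatory: since $\Gamma$ is open and $\int_{\mathbb X} e^{y\cdot \mathfrak t(x)}\mu(\ddr x)<+\infty$ on $\Gamma$, a dominated-convergence argument lets me differentiate under the integral, so that $M \in C^{\infty}(\Gamma)$ with $\mathrm{Hess}\,M(y)=\mathrm{Cov}_y[\mathfrak t(X)]$; reducing to a minimal exponential family if necessary, this Hessian is everywhere positive definite, so $V=\nabla M$ is globally injective with positive-definite Jacobian, and the inverse function theorem yields that $\Theta=V(\Gamma)$ is open and $V$ a smooth diffeomorphism. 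Substituting $y=V^{-1}(\theta)$ directly shows that $\int_{\mathbb X} f_\theta\,\ddr\mu = 1$ and $\int_{\mathbb X} \mathfrak t(x) f_\theta(x)\mu(\ddr x)=\nabla M(V^{-1}(\theta))=\theta$.

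The first equality of the KL formula I would obtain by direct computation, starting from
\begin{equation*}
\mathcal D_{KL}(\tau\,\|\,\theta) = \int_{\mathbb X} \bigl[(V^{-1}(\tau)-V^{-1}(\theta))\cdot \mathfrak t(x) - M(V^{-1}(\tau))+M(V^{-1}(\theta))\bigr]\,f_\tau(x)\mu(\ddr x),
\end{equation*}
and invoking $\int \mathfrak t(x) f_\tau(x)\mu(\ddr x)=\tau$. For the second equality, I would note that $\Psi_\theta(y) = M(V^{-1}(\theta)+y) - M(V^{-1}(\theta))$, so the first-order condition $\nabla\Psi_\theta(y^\ast)=\tau$ gives $y^\ast = V^{-1}(\tau)-V^{-1}(\theta)$; plugging $y^\ast$ into $\tau\cdot y - \Psi_\theta(y)$ recovers precisely the expression just obtained for $\mathcal D_{KL}(\tau\,\|\,\theta)$, whence $\Psi_\theta^\ast(\tau)=\mathcal D_{KL}(\tau\,\|\,\theta)$.

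For \eqref{kullback_cramer} the plan is to apply Petrov's expansion of the Cramér series to the standardized centered variable $Y=(\mathfrak t(X)-\theta)/\sigma(\theta)$ under $f_\theta$: by the analytic construction of the Cramér-Petrov series (see, e.g., Ch.~VIII of \citet{Pet(75)}), the rate function $I(z)=\sup_t\{tz-\log\E_\theta[e^{tY}]\}$ admits, in a neighborhood of zero, the representation $I(z) = \tfrac12 z^2 - z^3 \mathcal L(z)$. A change of variable $z=(\tau-\theta)/\sigma(\theta)$ and the Legendre identification of the previous paragraph turn this into $\mathcal D_{KL}(\tau\,\|\,\theta)=\tfrac12 z^2-z^3\mathcal L(z)$, giving \eqref{kullback_cramer}. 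The multidimensional identity \eqref{kullback_cramerK} then follows by the same Legendre-duality bookkeeping: since $L(\zb)=\Psi_{\boldsymbol\theta}(\zb)-\zb\cdot\boldsymbol\theta$, the equation $\nabla L(\hat{\zb})=\boldsymbol\tau-\boldsymbol\theta$ is equivalent to $\nabla\Psi_{\boldsymbol\theta}(\hat{\zb})=\boldsymbol\tau$, yielding $\hat{\zb}=V^{-1}(\boldsymbol\tau)-V^{-1}(\boldsymbol\theta)$; combining this with the identities already proved in part (iii) delivers the claim.

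The only genuinely non-routine step is the one-dimensional identity \eqref{kullback_cramer}, and the hard part there is not conceptual but bibliographical: one must match Petrov's analytically defined Cramér series (obtained by inverting the saddle-point equation formally in a power series) with the Legendre transform of the log-MGF. All remaining steps are direct substitutions in smooth exponential family calculus, so I do not expect any further obstacles beyond careful bookkeeping.
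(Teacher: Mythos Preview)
Your proposal is correct and follows essentially the same route as the paper. The paper is more terse on the preliminaries (it simply cites Barndorff--Nielsen for the smoothness, diffeomorphism, and KL/Legendre identities, whereas you sketch them out), but the core of the argument for \eqref{kullback_cramer} is identical: both proofs recognize that, by Petrov's very construction, $\tfrac12\eta^2-\eta^3\mathcal L(\eta)$ equals $\bar z L'(\bar z)-L(\bar z)$ at the saddle point $\bar z$ solving $L'(\bar z)=\sigma(\theta)\eta$, and then identify this Legendre transform with $\Psi_\theta^\ast(\tau)=\mathcal D_{KL}(\tau\,\|\,\theta)$; the multidimensional identity \eqref{kullback_cramerK} follows by the same bookkeeping in both cases.
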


\begin{proof}
For the main part of the lemma, we just quote any good reference on exponential families, such as \citet{BN(78)}. Here, we only prove \eqref{kullback_cramer}. Putting $\eta := \frac{\tau - \theta}{\sigma(\theta)}$,
we have by definition that 
$$
\frac 12 \eta^2 - \eta^3 \mathcal L(\eta) := \overline{z} L'(\overline{z}) - L(\overline{z})
$$
where $L(z) := -z\theta + \Psi_{\theta}(z)$ and $\overline{z}$ is the solution of the equation $\sigma(\theta) \eta = L'(\overline{z})$. Thus, the last equation can be rewritten as $\tau = \Psi'_{\theta}(\overline{z})$. 
Whence,
$$
\overline{z} L'(\overline{z}) - L(\overline{z}) = \overline{z}(-\theta + \Psi'_{\theta}(\overline{z})) + \overline{z}\theta - \Psi_{\theta}(\overline{z}) = \tau \overline{z} - \Psi_{\theta}(\overline{z}) = \Psi_{\theta}^{\ast}(\tau)
$$
completing the proof of \eqref{kullback_cramer}. The same computation in generic dimension yields \eqref{kullback_cramerK}.
\end{proof}

At this stage, noticing that the family of Bernoulli distributions is a member of the regular exponential family parametrized by the mean, we can apply \eqref{kullback_cramer} with $\theta = p_1$ and $\tau = p_0 + 
l/n + \mathfrak{Z}_{n,\pm}^{(\ep,m)}(p_0; \lambda_{\alpha})$ to conclude that
\begin{align*}
&\Hg_n\left(\frac{\sqrt{n} \Delta + l/{\sqrt{n}} + \mathfrak{Z}_{n,\pm}^{(\ep,m)}(p_0; \lambda_{\alpha})}{\sqrt{p_1(1-p_1)} } \right) \\
&\sim \frac{1}{\sqrt{n}} \exp\left\{-n \mathcal D_{KL}\left( p_0 + 
l/n + \mathfrak{Z}_{n,\pm}^{(\ep,m)}(p_0; \lambda_{\alpha})/\sqrt{n}\ \|\ p_1\right)\right\}\ .
\end{align*}
Thus, if $\Delta < 0$, we have
$$
\mathcal D_{KL}\left( p_0 + l/n + \mathfrak{Z}_{n,+}^{(\ep,m)}(p_0; \lambda_{\alpha})/\sqrt{n}\ \|\ p_1\right) < \mathcal D_{KL}\left( p_0 + l/n + \mathfrak{Z}_{n,-}^{(\ep,m)}(p_0; \lambda_{\alpha})/\sqrt{n}\ \|\ p_1\right)
$$
and we conclude that
\begin{align*}
1-\beta_n(p_1;\alpha) &\sim \frac{1}{\sqrt{n}} \frac{1}{c_{\varepsilon,m}} \sum_{l=-m}^m  e^{-\varepsilon |l|} \times \\
&\times \exp\left\{-n \mathcal D_{KL}\left( p_0 + l/n + \mathfrak{Z}_{n,+}^{(\ep,m)}(p_0; \lambda_{\alpha})/\sqrt{n}\ \|\ p_1\right)\right\}\ . 
\end{align*}
As a last step of the proof, we expand the above expression containing $\mathcal D_{KL}$ by the Taylor formula, obtaining
\begin{align*}
&\mathcal D_{KL}\left( p_0 + l/n + \mathfrak{Z}_{n,+}^{(\ep,m)}(p_0; \lambda_{\alpha})/\sqrt{n}\ \|\ p_1\right) \\
&= \mathcal D_{KL}\left( p_0\ \|\ p_1\right) + \partial_{p_0}\mathcal D_{KL}\left( p_0\ \|\ p_1\right) \left( l/n + \mathfrak{Z}_{n,+}^{(\ep,m)}(p_0; \lambda_{\alpha})/\sqrt{n}\right)\\
&+ \frac 12 \partial_{p_0}^2 \mathcal D_{KL}\left( p_0\ \|\ p_1\right)[\mathfrak{Z}_{n,+}^{(\ep,m)}(p_0; \lambda_{\alpha})]^2/n + O(n^{-3/2})\ .
\end{align*}
In conclusion, we get  
\begin{align*}
1-\beta_n(p_1;\alpha) &\sim \frac{1}{\sqrt{n}} \mathfrak M_L\left(\partial_{p_0}\mathcal D_{KL}\left( p_0\ \|\ p_1\right)\right) \times \\
&\times \exp\left\{-n \left[ \mathcal D_{KL}\left( p_0\ \|\ p_1\right) + \partial_{p_0}\mathcal D_{KL}\left( p_0\ \|\ p_1\right) \mathfrak{Z}_{n,+}^{(\ep,m)}(p_0; \lambda_{\alpha})/\sqrt{n}
\right.\right. \\ 
&\left.\left.+ \frac 12 \partial_{p_0}^2 \mathcal D_{KL}\left( p_0\ \|\ p_1\right)[\mathfrak{Z}_{n,+}^{(\ep,m)}(p_0; \lambda_{\alpha})]^2/n \right]\right\}
\end{align*}
which entails the thesis of the theorem.



\section{Proofs for $k>2$}\label{Appb}

In this section we will prove Theorems \ref{teo32}, \ref{teo32n} and \ref{teo33} in the case that our data set is a table with more than two cells, i.e. when $k>2$. 
Thus, the observable variable reduces to the countings $\bbb = (b_1, \dots, b_{k-1})$ contained in the
first $k-1$ cells. Here, we state some preparatory results as in Appendix A.

\subsection{Preparatory steps}

First of all, let us state a proposition that fixes the exact expression of the likelihood under the true model \eqref{true_mod}. 
\begin{lem} \label{lemma_likelihoodK}
Let $m \in \N$ and $\varepsilon > 0$ be fixed to define the Laplace distribution \eqref{Laplace}. Then, if $n > m$ and $b_i \in R(n,m) := \{m, m+1, \dots, n-m\}$ for $i=1, 2, \dots, k-1$, we have
\begin{align} 
L_{n,T}(\pb; \bbb) &= \left(\frac{1}{\cepsm}\right)^{k-1} \sum_{l_1=-m}^m \dots \sum_{l_{k-1}=-m}^m e^{-\ep |\lb|} \binom{n}{b_1-l_1, \dots, b_{k-1}-l_{k-1}} \times \label{LikTrueRegK} \\
&\times p_1^{b_1-l_1} \dots p_{k-1}^{b_{k-1}-l_{k-1}}
(1-|\pb|)^{n-|\bbb+\lb|} \qquad (\pb \in \Delta_{k-1}). \nonumber 
\end{align}
Moreover, under the same assumption, for any $\pb$ in the interior $\Delta_{k-1}^o$ of $\Delta_{k-1}$ we can write
\begin{equation} \label{LikTrueDecK} 
L_{n,T}(\pb; \bbb) = L_n^{(0)}(\pb; \bbb) \cdot H_{n,T}^{(\ep,m)}(\pb; \bbb)
\end{equation}
with
\begin{align}
L_n^{(0)}(\pb; \bbb) &:= \binom{n}{b_1, \dots, b_{k-1}} p_1^{b_1} \dots p_{k-1}^{b_{k-1}}(1-|\pb|)^{n-|\bbb|}\label{Definition_Ln0K} \\
H_{n,T}^{(\ep,m)}(\pb; \bbb) &:= \left(\frac{1}{\cepsm}\right)^{k-1} \sum_{l_1=-m}^m \dots \sum_{l_{k-1}=-m}^m e^{-\ep |\lb|} \rho(n,\bbb,\lb) \times \label{Definition_HnTK} \\
&\times \left(\frac{1-|\pb|}{p_1}\right)^{l_1} \dots  \left(\frac{1-|\pb|}{p_{k-1}}\right)^{l_{k-1}} \nonumber \\
\rho(n,\bbb,\lb) &:= \frac{b_1! \dots b_{k-1}! (n-|\bbb|)!}{(b_1-l_1)! \dots (b_{k-1} - l_{k-1})! (n - |\bbb+\lb|)!} \label{Definition_rho_nblK} \ . 
\end{align}
\end{lem}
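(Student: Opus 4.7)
The plan is to extend the argument of Lemma \ref{lemma_likelihood} from the binary case to the multidimensional one, exploiting the product structure of the Laplace perturbation across the first $k-1$ cells. I would start from the representation $\mathbf{B}_n = \mathbf{A}_n + \mathbf{L}$, with $\mathbf{A}_n$ Multinomial of parameter $(n,\pb)$, independent of $\mathbf{L} = (L_1,\ldots,L_{k-1})$ whose components are i.i.d.\ according to \eqref{Laplace}. Since $L_{n,T}(\pb;\bbb) = \text{Pr}[\mathbf{B}_n = \bbb]$, conditioning on $\mathbf{L}$ and using the independence of $\mathbf{A}_n$ and $\mathbf{L}$, together with the factorization of the joint distribution of $\mathbf{L}$ into its marginals, yields
\begin{displaymath}
L_{n,T}(\pb;\bbb) = \sum_{\lb \in \{-m,\ldots,m\}^{k-1}} \text{Pr}[\mathbf{A}_n = \bbb - \lb] \cdot \prod_{i=1}^{k-1} \frac{e^{-\ep|l_i|}}{\cepsm}.
\end{displaymath}

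Next, I would verify that, under the assumption $n > m$ and $b_i \in R(n,m)$ for every $i$, the per-cell summation ranges $\{-m \vee (b_i - n),\ldots,m \wedge b_i\}$ that naturally appear in \eqref{true_mod} collapse to $\{-m,\ldots,m\}$: indeed $b_i \geq m$ ensures $b_i - l_i \geq 0$ and $b_i \leq n - m$ ensures $b_i - l_i \leq n$ for every $l_i \in \{-m,\ldots,m\}$. For joint configurations $\lb$ violating $|\bbb - \lb| \leq n$, the corresponding Multinomial probability simply vanishes, so retaining those terms in the sum is harmless. Inserting the explicit form
\begin{displaymath}
\text{Pr}[\mathbf{A}_n = \bbb - \lb] = \binom{n}{b_1 - l_1, \ldots, b_{k-1} - l_{k-1}} \prod_{i=1}^{k-1} p_i^{b_i - l_i} \cdot (1-|\pb|)^{n - |\bbb + \lb|}
\end{displaymath}
then produces \eqref{LikTrueRegK} directly.

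The decomposition \eqref{LikTrueDecK} is a straightforward algebraic rearrangement. For $\pb$ in the interior of $\Delta_{k-1}$ all coordinates $p_i$ are strictly positive, so each summand in \eqref{LikTrueRegK} factors as
\begin{displaymath}
\binom{n}{b_1 - l_1,\ldots,b_{k-1} - l_{k-1}} = \binom{n}{b_1,\ldots,b_{k-1}} \cdot \rho(n,\bbb,\lb),
\end{displaymath}
with $\rho(n,\bbb,\lb)$ as in \eqref{Definition_rho_nblK}, and the product of powers rewrites as
\begin{displaymath}
\prod_{i=1}^{k-1} p_i^{b_i - l_i} (1-|\pb|)^{n - |\bbb+\lb|} = \prod_{i=1}^{k-1} p_i^{b_i} (1-|\pb|)^{n - |\bbb|} \prod_{i=1}^{k-1} \left(\frac{1-|\pb|}{p_i}\right)^{l_i}.
\end{displaymath}
Pulling the $\lb$-independent factors out of the summation produces exactly $L_n^{(0)}(\pb;\bbb)$ as in \eqref{Definition_Ln0K}, while the residual $\lb$-sum reproduces $H_{n,T}^{(\ep,m)}(\pb;\bbb)$ as in \eqref{Definition_HnTK}. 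The whole argument is elementary; the only step worth highlighting is the verification that the regularity assumption $b_i \in R(n,m)$ lets one work with the full hypercube $\{-m,\ldots,m\}^{k-1}$ without per-coordinate boundary cut-offs, which is what makes the closed-form expression \eqref{LikTrueRegK} available in the first place.
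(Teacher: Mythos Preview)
Your proof is correct and follows essentially the same approach as the paper's: both start from $\mathbf{B}_n = \mathbf{A}_n + \mathbf{L}$, use independence to write the likelihood as a convolution sum, and then obtain the factorization \eqref{LikTrueDecK} by elementary algebra. Your write-up is in fact more explicit than the paper's, which simply records the convolution identity and dismisses the decomposition as ``straightforward algebraic manipulations''; your check that the assumption $b_i \in R(n,m)$ removes the boundary truncation in the summation range is a useful clarification that the paper leaves implicit.
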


\begin{proof}
Start from
\begin{equation} \label{ApiuLK} 
\mathbf{B}_n = \Abb_n + \mathbf{L}
\end{equation}
where the random variables $\Abb_{n}$ and $\mathbf{L}$, defined on the probability space $(\Omega,\mathscr{F},\text{Pr})$, are independent, $\Abb_n \sim Mult(n,\pb)$ and $\mathbf L = (L_1, \dots, L_{k-1})$ 
is a random vector with independent components with each $L_i$ having the Laplace distribution \eqref{Laplace}. 
By definition $L_{n,T}(\pb; \bbb) := \textrm{Pr}[\mathbf B_n = \bbb]$, so that, under the assumption of the Lemma, the independence of $\Abb_{n}$ and $\mathbf L$ entails
$$
\textrm{Pr}[\mathbf B_n = \bbb] = \sum_{l_1=-m}^m \dots \sum_{l_{k-1}=-m}^m \textrm{Pr}[\Abb_n = \bbb-\lb] \cdot \textrm{Pr}[\mathbf L = \lb]
$$  
proving \eqref{LikTrueRegK}. Finally, the decomposition \eqref{LikTrueDecK} ensues from straightforward algebraic manipulations.
\end{proof}

The next result is a multidimensional analogue of Lemma \ref{lemma_rho}, that can be obtained by considering the observable quantity $\bbb$ as itself dependent by $n$. This task proves very cumbersome for 
a general dimension $k$ so that, in the remaining part of the subsection, we will confine ourselves to dealing with the case $k=3$.  
\begin{lem} \label{lemma_rhoK}
Let $\xib \in \Delta_2^o$ be a fixed vector. Under the assumption that $\bbb = n(\xib + \epsilonb_n)$ with $\lim_{n\rightarrow 0} \epsilonb_n = \mathbf 0$, there exists $n_0 = n_0(\xib; \ep,m)$ such that 
\begin{align} \label{exp_rhonK} 
\rho_n(\xib,\lb) &:= \rho(n, \bbb, \lb) = \left(\frac{\xi_1}{1-\xi_1-\xi_2}\right)^{l_1} \left(\frac{\xi_2}{1-\xi_1-\xi_2}\right)^{l_2} \times \\
&\times \left\{1 + \alphab_1(\xib,\lb) \cdot \epsilonb_n + 
\ ^t\epsilonb_n \mathbb{A}_2(\xib,\lb) \epsilonb_n + \alpha_3(\xib,\lb) \frac 1n + \alphab_4(\xib,\lb) \cdot \frac{\epsilonb_n}{n} + R_n(\xib,\lb) \right\} \nonumber 
\end{align}
holds for any $n \geq n_0$ and $\lb \in \{-m, \dots, m\}^2$, where
\begin{align*}
\alphab_1(\xib,\lb) &:= \left(\frac{l_1}{\xi_1} +\frac{l_1+l_2}{1-\xi_1-\xi_2}, \frac{l_2}{\xi_2} +\frac{l_1+l_2}{1-\xi_1-\xi_2}\right) \\
\mathbb{A}_2(\xib,\lb) &:=  \begin{pmatrix}
a_{1,1}(\xib,\lb) & a_{1,2}(\xib,\lb) \\
a_{2,1}(\xib,\lb) & a_{2,2}(\xib,\lb) 
\end{pmatrix}
\\
a_{1,1}(\xib,\lb) &:= \binom{l_1}{2}\frac{1}{\xi_1^2} + \binom{l_1+l_2+1}{2}\frac{1}{(1-\xi_1-\xi_2)^2} + \frac{l_1(l_1+l_2)}{\xi_1(1-\xi_1-\xi_2)} \\
a_{1,2}(\xib,\lb) &:= \binom{l_1+l_2+1}{2}\frac{1}{(1-\xi_1-\xi_2)^2} + \frac{l_1l_2}{2 \xi_1\xi_2} + \frac{l_1(l_1+l_2)}{2\xi_1(1-\xi_1-\xi_2)} + \frac{l_2(l_1+l_2)}{2\xi_2(1-\xi_1-\xi_2)} \\
a_{2,1}(\xib,\lb) &:= \binom{l_1+l_2+1}{2}\frac{1}{(1-\xi_1-\xi_2)^2} + \frac{l_1l_2}{2 \xi_1\xi_2} + \frac{l_1(l_1+l_2)}{2\xi_1(1-\xi_1-\xi_2)} + \frac{l_2(l_1+l_2)}{2\xi_2(1-\xi_1-\xi_2)} \\
a_{2,2}(\xib,\lb) &:= \binom{l_2}{2}\frac{1}{\xi_2^2} + \binom{l_1+l_2+1}{2}\frac{1}{(1-\xi_1-\xi_2)^2} + \frac{l_2(l_1+l_2)}{\xi_2(1-\xi_1-\xi_2)} \\
\alpha_3(\xib,\lb) &:= -\left[\binom{l_1}{2}\frac{1}{\xi_1} +\binom{l_2}{2}\frac{1}{\xi_2} + \binom{l_1+l_2+1}{2}\frac{1}{1-\xi_1-\xi_2} \right] \\
\alphab_4(\xib,\lb) &:= -\frac{l(l^2 - 2l +1 + 2\xi^2 + 4l\xi - 2\xi)}{2\xi^2(1-\xi)^2} \\
|R_n(\xib,\lb)| &\leq C(\xib,\lb) \left[|\epsilonb_n|^3 + \frac{1}{n^2} + \frac{|\epsilonb_n|^2}{n} \right]
\end{align*}
for some constant $C(\xib,\lb)$. Therefore, under the same assumption, there holds
\begin{align}
H_{n,T}^{(\ep,m)}(\pb; \bbb) &= \mathcal H_0^{(\ep,m)}(\pb; \xib) + \boldsymbol{\mathcal H}_1^{(\ep,m)}(\pb; \xib) \cdot \epsilonb_n +\ ^t\epsilonb_n \mathbb{H}_2(\pb;\xib) \epsilonb_n\label{HnT_ep_mK}\\
&+ \mathcal H_3^{(\ep,m)}(\pb; \xib) \frac 1n + \boldsymbol{\mathcal H}_4^{(\ep,m)}(\pb; \xib) \cdot \frac{\epsilonb_n}{n} + \mathcal R_n(\pb;\xib) \nonumber
\end{align}
for any $n \geq n_0$, where
\begin{align*}
\mathcal H_i^{(\ep,m)}(\pb; \xib) &:= \left(\frac{1}{\cepsm}\right)^2 \sum_{l_1=-m}^m \sum_{l_2=-m}^m e^{-\ep (|l_1| + |l_2|)} \alpha_i(\xib,\lb) \times \\
&\times \left(\frac{\xi_1(1-p_1-p_2)}{p_1(1-\xi_1-\xi_2)}\right)^{l_1} \left(\frac{\xi_2(1-p_1-p_2)}{p_2(1-\xi_1-\xi_2)}\right)^{l_2} \qquad (i= 0,3)\\
\boldsymbol{\mathcal H}_i^{(\ep,m)}(\pb; \xib) &:= \left(\frac{1}{\cepsm}\right)^2 \sum_{l_1=-m}^m \sum_{l_2=-m}^m e^{-\ep (|l_1| + |l_2|)} \alphab_i(\xib,\lb) \times \\
&\times \left(\frac{\xi_1(1-p_1-p_2)}{p_1(1-\xi_1-\xi_2)}\right)^{l_1} \left(\frac{\xi_2(1-p_1-p_2)}{p_2(1-\xi_1-\xi_2)}\right)^{l_2} \qquad (i= 1,4)\\
\mathbb{H}_2^{(\ep,m)}(\pb;\xib) &:= \left(\frac{1}{\cepsm}\right)^2 \sum_{l_1=-m}^m \sum_{l_2=-m}^m e^{-\ep (|l_1| + |l_2|)} \mathbb{A}_2(\xib,\lb) \times \\
&\times \left(\frac{\xi_1(1-p_1-p_2)}{p_1(1-\xi_1-\xi_2)}\right)^{l_1} \left(\frac{\xi_2(1-p_1-p_2)}{p_2(1-\xi_1-\xi_2)}\right)^{l_2}  \\
|\mathcal R_n(\pb; \xib)| &\leq \mathcal C(\pb;\xib) \left[|\epsilonb_n|^3 + \frac{1}{n^2} + \frac{|\epsilonb_n|^2}{n}\right]
\end{align*}
with $\alpha_0(\xib,\lb) \equiv 1$ and 
\begin{align*}
\mathcal C(\pb;\xib) := \left(\frac{1}{\cepsm}\right)^2 \sum_{l_1=-m}^m \sum_{l_2=-m}^m & e^{-\ep (|l_1| + |l_2|)} C(\xib,\lb) \times \\
& \times \left(\frac{\xi_1(1-p_1-p_2)}{p_1(1-\xi_1-\xi_2)}\right)^{l_1} \left(\frac{\xi_2(1-p_1-p_2)}{p_2(1-\xi_1-\xi_2)}\right)^{l_2} \ .
\end{align*}
\end{lem}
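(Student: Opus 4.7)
Plan. The proof is a two-dimensional generalization of Lemma~\ref{lemma_rho}, and the strategy is the same: reduce to a principal sign case by a symmetry argument, expand numerator and denominator of $\rho$ using Stirling numbers of the first kind, and multiply out after inverting the denominator. First I fix $n_0 = n_0(\xib; \ep, m)$ large enough so that the assumptions of Lemma~\ref{lemma_likelihoodK} hold for $n \geq n_0$. The key observation for the symmetry reduction is that, setting $l_3 := -l_1 - l_2$, the value $\rho$ is invariant under simultaneously permuting the three coordinates $(b_1, b_2, n-b_1-b_2)$ and the three labels $(l_1, l_2, l_3)$, because under such a permutation each $b_i - l_i$ transforms into another member of the same triple. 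Since at least one of $l_1, l_2, l_3$ is nonnegative and at least one is nonpositive, the permutation freedom reduces the analysis to the principal case $l_1, l_2 \geq 0$ with $l_1 + l_2 \geq 0$, in direct analogy with the identity $\rho(n, b, -l) = \rho(n, n-b, l)$ exploited in the one-dimensional proof.

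In the principal case I write
\begin{displaymath}
\rho_n(\xib, \lb) = \frac{\sum_{k_1 = 0}^{l_1} \sum_{k_2 = 0}^{l_2} \mathfrak{s}(l_1, k_1) \mathfrak{s}(l_2, k_2)\, n^{k_1 + k_2 - l_1 - l_2}(\xi_1 + \epsilon_{1,n})^{k_1}(\xi_2 + \epsilon_{2,n})^{k_2}}{\sum_{k=1}^{l_1 + l_2 + 1} |\mathfrak{s}(l_1 + l_2 + 1, k)|\, n^{k - l_1 - l_2 - 1}(1 - \xi_1 - \xi_2 - \epsilon_{1,n} - \epsilon_{2,n})^{k - 1}} =: \frac{\mathfrak{N}_n}{\mathfrak{D}_n},
\end{displaymath}
and expand both $\mathfrak{N}_n$ and $\mathfrak{D}_n$ by the multivariate binomial and Taylor formulae up to second order in the small quantities $(\epsilon_{1,n}, \epsilon_{2,n}, 1/n)$. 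Using the Stirling identities $\mathfrak{s}(l, l) = 1$ and $\mathfrak{s}(l, l-1) = -\binom{l}{2}$, already invoked in the 1D argument, one obtains $\mathfrak{N}_n = \xi_1^{l_1} \xi_2^{l_2}(1 + O(|\epsilonb_n| + 1/n))$ and $\mathfrak{D}_n = (1-\xi_1-\xi_2)^{l_1 + l_2}(1 + O(|\epsilonb_n| + 1/n))$ in closed form up to the stated remainder. Inverting $\mathfrak{D}_n$ via $1/(1+t) = 1 - t + t^2 + O(t^3)$ componentwise and then multiplying $\mathfrak{N}_n \cdot \mathfrak{D}_n^{-1}$ out, I group terms by type and absorb all higher-order pieces into $R_n$, whose claimed bound follows from the standard Taylor residue estimate uniformly in $\lb \in \{-m, \ldots, m\}^2$. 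Matching coefficients of $\epsilonb_n$, $\epsilonb_n^{\otimes 2}$, $1/n$ and $\epsilonb_n/n$ then identifies $\alphab_1$, $\mathbb{A}_2$, $\alpha_3$, $\alphab_4$; in particular, the off-diagonal entries $a_{1,2} = a_{2,1}$ of $\mathbb{A}_2$ pick up contributions both from the cross-term $\epsilon_{1,n}\epsilon_{2,n}$ in the expansion of $(1-\xi_1-\xi_2-\epsilon_{1,n}-\epsilon_{2,n})^{-(l_1+l_2)}$ and from the symmetric product of the linear parts of $\mathfrak{N}_n$ and $\mathfrak{D}_n^{-1}$.

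The remaining sign cases for $(l_1, l_2)$ are then handled by the $S_3$-symmetry noted above: each permutation swaps some $\xi_i$ with $1-\xi_1-\xi_2$ and replaces some $l_j$ by $-l_1-l_2$ or its negatives, and one checks that $\alphab_1, \mathbb{A}_2, \alpha_3, \alphab_4$ transform consistently, mirroring the 1D identities $-\alpha_1(\eta, h) = \alpha_1(\xi, l)$, $\alpha_2(\eta, h) = \alpha_2(\xi, l)$, etc. Finally, \eqref{HnT_ep_mK} is immediate upon substituting \eqref{exp_rhonK} into \eqref{Definition_HnTK}: the factor $(\xi_1/(1-\xi_1-\xi_2))^{l_1}(\xi_2/(1-\xi_1-\xi_2))^{l_2}$ combines with $((1-|\pb|)/p_1)^{l_1}((1-|\pb|)/p_2)^{l_2}$ to yield the ratio appearing inside each $\mathcal{H}_i^{(\ep, m)}$, and the bound on $\mathcal{R}_n$ follows by summing $|R_n(\xib, \lb)|$ against the weights $e^{-\ep|\lb|}$.

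The main obstacle I expect is the combinatorial bookkeeping of the cross terms: whereas in the 1D proof only two factorial factors had to be tracked, here there are three (two in the numerator and one in the denominator), and the off-diagonal $\mathbb{A}_2$-entries receive contributions from several distinct sources that must be summed correctly. The $S_3$-symmetry reduction, which cleanly extends the $b \leftrightarrow n-b$ trick of Lemma~\ref{lemma_rho}, is essential to avoid enumerating by hand the eight possible sign patterns of $(l_1, l_2, l_1 + l_2)$ and to keep the algebra manageable.
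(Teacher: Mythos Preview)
Your approach is essentially the same as the paper's: both expand $\rho(n,\bbb,\lb)$ as a ratio $\mathfrak N_n/\mathfrak D_n$ via Stirling numbers of the first kind, expand numerator and denominator to second order, invert $\mathfrak D_n$ by $1/(1+t)=1-t+t^2+O(t^3)$, multiply out, and then obtain \eqref{HnT_ep_mK} by substitution into \eqref{Definition_HnTK}. The paper treats the boundary values $l_i\in\{0,1\}$ separately by direct verification and dispatches the negative-sign cases with a one-line appeal to the 1D symmetry trick, whereas you try to organize all sign patterns at once via an $S_3$-symmetry.

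There is, however, a concrete gap in your symmetry reduction. The claim that the $S_3$-action on $(l_1,l_2,l_3)$ with $l_3:=-l_1-l_2$ always permits one to arrange $l_1,l_2\geq 0$ is false: take $(l_1,l_2)=(-2,-3)$, so $l_3=5$; only one of the three is nonnegative, and no permutation puts two nonnegative entries into the first two slots. In general, when exactly two of $l_1,l_2,l_3$ are strictly negative, your principal case is unreachable by permutation alone, and the Stirling-number formula you write for $\mathfrak N_n$ (which presumes $l_1,l_2\geq 0$ so that $(b_i)_{\downarrow l_i}$ makes sense) does not apply. The fix is routine: either (i) also treat the mirror principal case $l_1,l_2\leq 0$ (equivalently $l_3\geq 0$), in which the roles of rising and falling factorials are exchanged and the expansion goes through verbatim; or (ii) expand each of the three ratios $b_i!/(b_i-l_i)!$ separately according to the sign of $l_i$ and multiply the three one-dimensional expansions, which bypasses the need for a global sign reduction altogether. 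With either repair the remainder of your argument goes through and matches the paper's.
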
 

\begin{proof}
We start by dealing with \eqref{exp_rhonK}. First, we find $n_0 = n_0(\xib; \ep,m)$ in such a way that the assumptions of Lemma \ref{lemma_likelihood} are fulfilled for any $n \geq n_0$. 
Then, as in the proof of Lemma \ref{lemma_rho}, the thesis can be checked by direct computation if either $l_1 \in \{0,1\}$ or $l_2 \in \{0,1\}$.
Now, if $l_1,l_2 \in \{2, \dots, m\}$, we use \eqref{Definition_rho_nblK} to get
\begin{align*}
\rho(n, \bbb, \lb) &= \frac{(b_1)_{\downarrow l_1} (b_2)_{\downarrow l_2}}{(n-b_1-b_2+1)_{\uparrow (l_1+l_2)}} = \frac{(b_1)_{\downarrow l_1} (b_2)_{\downarrow l_2} (n-b_1-b_2)}{(n-b_1-b_2)_{\uparrow 
(l_1+l_2+1)}} \\
&= \frac{\left(\sum_{k_1=1}^{l_1} \mathfrak s(l_1,k_1) b_1^{k_1}\right) \cdot \left(\sum_{k_2=1}^{l_2} \mathfrak s(l_2,k_2) b_2^{k_2}\right)}{\sum_{k=1}^{l_1+l_2+1} |\mathfrak s(l_1+l_2+1,k)| (n-b_1-b_2)^{k-1}} 
\end{align*}
where again $\mathfrak s(l,k)$ denotes the Stirling number of first kind. Whence, 
\begin{align*}
\rho_n(\xib,\lb) &= \frac{\left(\sum_{k_1=1}^{l_1} \mathfrak s(l_1,k_1) n^{k_1-l_1} (\xi_1+\epsilon_{n,1})^{k_1}\right) \cdot \left(\sum_{k_2=1}^{l_2} \mathfrak s(l_2,k_2) n^{k_2-l_2} (\xi_2+\epsilon_{n,2})^{k_1}\right)}
{\sum_{k=1}^{l_1+l_2+1} |\mathfrak s(l_1+l_2+1,k)| n^{k-l_1-l_2-1} (1-\xi_1-\xi_2-\epsilon_{n,1}-\epsilon_{n,2})^{k-1}} \\
&= \frac{\mathfrak N_n}{\mathfrak D_n}\ .
\end{align*}
At this stage, recalling that $\mathfrak s(l,l) = 1$ and $\mathfrak s(l,l-1) = -\binom{l}{2}$, and exploiting the binomial formula, we have
\begin{align*}
\mathfrak{N}_n &:= \xi_1^{l_1} \left\{1 + l_1 \frac{\epsilon_{n,1}}{\xi_1} + \frac{l_1(l_1-1)}{2} \frac{\epsilon_{n,1}^2}{\xi_1^2} - \frac{l_1(l_1-1)}{2n} \frac{1}{\xi_1} - 
\frac{l_1(l_1-1)^2}{2n} \frac{\epsilon_{n,1}}{\xi_1^2} + R_n(\xi_1,l_1)\right\} \times \\
&\times \xi_2^{l_2} \left\{1 + l_2 \frac{\epsilon_{n,2}}{\xi_2} + \frac{l_2(l_2-1)}{2} \frac{\epsilon_{n,2}^2}{\xi_2^2} - \frac{l_2(l_2-1)}{2n} \frac{1}{\xi_2} - 
\frac{l_2(l_2-1)^2}{2n} \frac{\epsilon_{n,2}}{\xi_2^2} + R_n(\xi_2,l_2)\right\} \\
\mathfrak{D}_n &:= (1-\xi_1-\xi_2)^{l_1+l_2}  \left\{1 - (l_1+l_2) \frac{\epsilon_{n,1} + \epsilon_{n,2}}{1-\xi_1-\xi_2} + \frac{(l_1+l_2)(l_1+l_2-1)}{2} \frac{(\epsilon_{n,1} + \epsilon_{n,2})^2}{(1-\xi_1-\xi_2)^2} \right. \\
&+ \frac{(l_1+l_2)(l_1+l_2+1)}{2n} \frac{1}{1-\xi_1-\xi_2} \\
&\left. - \frac{(l_1+l_2)(l_1+l_2-1)(l_1+l_2+1)}{2n} \frac{\epsilon_{n,1} + \epsilon_{n,2}}{(1-\xi_1-\xi_2)^2} + R_n(\xi_1+\xi_2,l_1+l_2)\right\} 
\end{align*}
for suitable expressions of $R_n(\xi,l)$ (possibly different from line to line) which are, in any case, bounded by an expression like
$C(\xib,\lb) \left[|\epsilonb_n|^3 + \frac{1}{n^2} + \frac{|\epsilonb_n|^2}{n} \right]$. 
To proceed further, we exploit that $\frac{1}{1+t} = 1 - t + t^2 + o(t^2)$ as $t\rightarrow 0$, to obtain
\begin{align*}
\mathfrak{D}_n^{-1} &:= \frac{1}{(1-\xi_1-\xi_2)^{l_1+l_2}} \left\{1 + (l_1+l_2) \frac{\epsilon_{n,1}+\epsilon_{n,2}}{1-\xi_1-\xi_2} 
+ \frac{(l_1+l_2)(l_1+l_2+1)}{2} \frac{(\epsilon_{n,1}+\epsilon_{n,2})^2}{(1-\xi_1-\xi_2)^2} \right. \\
&- \frac{(l_1+l_2)(l_1+l_2+1)}{2n} \frac{1}{1-\xi_1-\xi_2} \\
&\left. - \frac{(l_1+l_2)(l_1+l_2+1)^2}{2n} \frac{\epsilon_{n,1}+\epsilon_{n,2}}{(1-\xi_1-\xi_2)^2} + R_n(\xi_1+\xi_2,l_1+l_2)\right\} \ .
\end{align*}
The thesis now follows by multiplying the last expression by that of $\mathfrak{N}_n$, neglecting all the terms which are comparable with $R_n(\xi,l)$. Thus, \eqref{exp_rhon} is proved also for all 
$l \in \{2, \dots, m\}$.
If either $l_1 \in \{-m, \dots, -1\}$ or $l_2 \in \{-m, \dots, -1\}$ the argument can be reduced to the previous case, as in the proof of Lemma \ref{lemma_rho}.
This completes the proof of \eqref{exp_rhonK}.

Finally, \eqref{HnT_ep_mK} follows immediately from the combination of \eqref{Definition_HnTK} with \eqref{exp_rhonK}.
\end{proof}

The way is now paved to state a result on the expansion of the MLE which is analogous to Lemma \ref{lem:MLE}
\begin{lem} \label{lem:MLEK}
Under the same assumption of Lemma \ref{lemma_rhoK}, there holds
\begin{equation} \label{Expansion_MLE_trueK}
\hat{\pb}_{n,T} = \xib + \epsilonb_n + \mathbf R_{n,T}(\xib; \epsilonb_n)
\end{equation}
where $\mathbf R_{n,T}(\xib; \epsilonb_n)$ is a remainder term satisfying $|\mathbf R_{n,T}(\xib; \epsilonb_n)| = O(|\epsilonb_n|^3/n)$. 
\end{lem}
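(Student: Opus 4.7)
The plan is to mimic the one-dimensional argument of Lemma \ref{lem:MLE}, treating all quantities as vectors in $\R^{k-1}$. First, I would take logarithms in the decomposition \eqref{LikTrueDecK} to obtain
$$\ell_{n,T}(\pb;\bbb) := \log L_{n,T}(\pb;\bbb) = \log L_n^{(0)}(\pb;\bbb) + \log H_{n,T}^{(\ep,m)}(\pb;\bbb),$$
so that \eqref{Definition_Ln0K} makes the first summand explicit while \eqref{HnT_ep_mK} from Lemma \ref{lemma_rhoK} provides a large-$n$ expansion of the second summand, with leading term $\log\mathcal{H}_0^{(\ep,m)}(\pb;\xib)$ and successive corrections of orders $\epsilonb_n$, $\epsilonb_n$-quadratic, $1/n$, and $\epsilonb_n/n$. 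Composing the expansion of $\log(1+z)$ with \eqref{HnT_ep_mK}, one writes these corrections as smooth functions of $\pb$ and $\xib$ that are expressible through ratios of the $\mathcal{H}_j^{(\ep,m)}$'s and $\boldsymbol{\mathcal{H}}_j^{(\ep,m)}$'s, exactly as in the scalar case.

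Next, I would impose the first-order optimality condition $\nabla_{\pb}\ell_{n,T}(\pb;\bbb) = \mathbf 0$. From \eqref{Definition_Ln0K} one computes the closed-form gradient
$$\partial_{p_i}\log L_n^{(0)}(\pb;\bbb) = n\left[\frac{\xi_i+\epsilon_{n,i}}{p_i} - \frac{1-|\xib+\epsilonb_n|}{1-|\pb|}\right],$$
which vanishes at $\pb = \xib + \epsilonb_n$, while the contribution from $\log H_{n,T}^{(\ep,m)}$ produces lower-order terms of $O(1)$, $O(\epsilonb_n)$, $O(1/n)$, etc. I would then plug in the ansatz
$$\hat{\pb}_{n,T} = \xib + \epsilonb_n + \frac{\boldsymbol{\Gamma}_0(\xib)}{n} + \frac{\boldsymbol{\Gamma}_1(\xib;\epsilonb_n)}{n} + \frac{\boldsymbol{\Gamma}_2(\xib;\epsilonb_n)}{n} + \boldsymbol{\Delta}_n(\xib;\epsilonb_n),$$
in which $\boldsymbol{\Gamma}_1$ is linear in $\epsilonb_n$, $\boldsymbol{\Gamma}_2$ is quadratic in $\epsilonb_n$, and $|\boldsymbol{\Delta}_n| = O(|\epsilonb_n|^3/n)$, and match coefficients of the monomials $1/n$, $\epsilonb_n/n$, $\epsilonb_n\otimes\epsilonb_n/n$ on both sides of the first-order condition to identify the unknown $\boldsymbol{\Gamma}_j$'s, precisely as done in the proof of Lemma \ref{lem:MLE}.

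The crucial computation is then to evaluate the derivatives $\partial_{p_i}^{\,\alpha}\mathcal{H}_j^{(\ep,m)}(\pb;\xib)$ at $\pb = \xib$ for $j\in\{0,1,2\}$ and low multi-indices $\alpha$, including cross second derivatives. Setting the phase vector $\boldsymbol{\phi}(\pb;\xib)$ with $i$-th coordinate $\log\{\xi_i(1-|\pb|)/[p_i(1-|\xib|)]\}$ one has $\boldsymbol{\phi}(\xib;\xib) = \mathbf 0$, hence $\mathcal{H}_0^{(\ep,m)}(\xib;\xib) = 1$ and $[\nabla_{\pb}\mathcal{H}_0^{(\ep,m)}]_{\pb=\xib} = \mathbf 0$, because the first moments of each $L_i$ vanish by symmetry of the truncated Laplace distribution. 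An analogous symmetry argument shows $\boldsymbol{\mathcal{H}}_1^{(\ep,m)}(\xib;\xib) = \mathbf 0$. Combining these vanishings with the second-moment identities (diagonal entries of $\mathbb{H}_2^{(\ep,m)}$ evaluated at $\pb=\xib$ equal a multiple of $\mathsf{Var}[L_1]$, off-diagonal entries vanish by independence of the $L_i$'s), the contributions from $\log H_{n,T}^{(\ep,m)}$ to the first-order condition collapse in such a way that $\boldsymbol{\Gamma}_0$, $\boldsymbol{\Gamma}_1$ and $\boldsymbol{\Gamma}_2$ are all shown to vanish identically, yielding \eqref{Expansion_MLE_trueK}.

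The main obstacle is the sheer bookkeeping of the multidimensional algebra: unlike the scalar case, one must track gradients, Hessians, and third-order tensors of $\boldsymbol{\phi}$, together with the full matrix $\mathbb{H}_2^{(\ep,m)}$ and its off-diagonal cross terms, and keep proper account of how each factor in the ansatz propagates through the first-order condition. Nevertheless, the structural symmetries of the truncated Laplace distribution that drove the scalar cancellations carry over mutatis mutandis, so the $k=3$ case ultimately reduces to a careful but essentially mechanical linear-algebra computation, with no new analytic ingredient beyond those already employed in Lemma \ref{lem:MLE}.
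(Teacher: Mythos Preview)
Your approach is essentially the same as the paper's: expand the log-likelihood via \eqref{LikTrueDecK} and \eqref{HnT_ep_mK}, write the gradient equation $\nabla_{\pb}\ell_{n,T}(\pb;\bbb)=\mathbf 0$, insert an ansatz $\hat{\pb}_{n,T}=\xib+\epsilonb_n+\frac{1}{n}(\boldsymbol\Gamma_0+\boldsymbol\Gamma_1+\boldsymbol\Gamma_2)+\boldsymbol\Delta_n$, and match coefficients to show the $\boldsymbol\Gamma_j$'s vanish. The paper likewise only sketches this, declaring the computations ``too heavy to be fully reproduced'' and concluding $\Gamma_{0,i}(\xib)=0$, $\boldsymbol\Gamma_{1,i}(\xib)=\mathbf 0$, $\mathbb V_{2,i}(\xib)=\mathbb O$ after careful algebra.

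One factual slip to correct: you assert that the off-diagonal entries of $\mathbb H_2^{(\ep,m)}(\pb;\xib)$ vanish at $\pb=\xib$ by independence of the $L_i$'s. They do not. The entries $a_{1,2}(\xib,\lb)$ in Lemma~\ref{lemma_rhoK} contain terms such as $\binom{l_1+l_2+1}{2}/(1-\xi_1-\xi_2)^2$ and $l_i(l_1+l_2)$, whose expectations under the symmetric Laplace law are nonzero (they pick up $\mathsf{Var}[L_1]$). In fact the paper records the exact identity $\mathbb H_2^{(\ep,m)}(\pb_0;\pb_0)=\mathsf{Var}[L_1]\,\mathbb I(\pb_0)^2$ in the subsequent Lemma, and $\mathbb I(\pb_0)^2$ has nontrivial off-diagonal part. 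The $\boldsymbol\Gamma_j$'s still vanish, but the cancellation at the quadratic level has to go through the full matrix $\mathbb I(\pb_0)^2$ interacting with the Hessian of the multinomial part, not through a diagonal simplification. Aside from this, your plan is sound and aligns with the paper.
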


\begin{proof}
We provide only a sketch of the proof, since the ensuing computations are too heavy to be fully reproduced. In any case, we start by providing a large $n$ expansion for the $\log$-likelihood, as follows
\begin{align*}
\ell_{n,T}(\pb;\bbb) := \log L_{n,T}(\pb;\bbb) &= \log \binom{n}{b_1, b_2} + n(\xi_1 + \epsilon_{n,1}) \log p_1 + n(\xi_2 + \epsilon_{n,2}) \log p_2 \\
& + n(1- \xi_1-\xi_2 - \epsilon_{n,1} - \epsilon_{n,2}) \log(1- p_1 - p_2) \\
&+ h_0^{(\ep,m)}(\pb; \xib) + \mathbf h_1^{(\ep,m)}(\pb; \xib) \cdot \epsilonb_n + 
\ ^t\epsilonb_n \mathbb{L}_2^{(\ep,m)}(\pb; \xib) \epsilonb_n \\
&+ h_3^{(\ep,m)}(\pb; \xib) \frac 1n + \mathbf h_4^{(\ep,m)}(\pb; \xib) \cdot \frac{\epsilonb_n}{n} + r_n(\pb;\xib)
\end{align*}
where, upon denoting by $\otimes$ the outer product between vectors, 
\begin{align*}
h_0^{(\ep,m)}(\pb; \xib) &:= \log \mathcal H_0^{(\ep,m)}(\pb; \xib) \\
\mathbf h_1^{(\ep,m)}(\pb; \xib) &:= \frac{\boldsymbol{\mathcal H}_1^{(\ep,m)}(\pb; \xib)}{\mathcal H_0^{(\ep,m)}(\pb; \xib)} \\
\mathbb{L}_2^{(\ep,m)}(\pb; \xib) &:= \frac{\mathbb{H}_2^{(\ep,m)}(\pb,\xib)}{\mathcal H_0^{(\ep,m)}(p; \xi)} - \frac 12 \left(\frac{\boldsymbol{\mathcal H}_1^{(\ep,m)}(p; \xi)}{\mathcal H_0^{(\ep,m)}(p; \xi)}\right)  
\otimes \left(\frac{\boldsymbol{\mathcal H}_1^{(\ep,m)}(p; \xi)}{\mathcal H_0^{(\ep,m)}(p; \xi)}\right) \\
h_3^{(\ep,m)}(\pb; \xib) &:= \frac{\mathcal H_3^{(\ep,m)}(\pb; \xib)}{\mathcal H_0^{(\ep,m)}(\pb; \xib)} \\
\mathbf h_4^{(\ep,m)}(\pb; \xib) &:= \frac{\boldsymbol{\mathcal H}_4^{(\ep,m)}(\pb; \xib)}{\mathcal H_0^{(\ep,m)}(\pb; \xib)} - \frac{\mathcal H_1^{(\ep,m)}(p; \xi)}{\mathcal H_0^{(\ep,m)}(p; \xi)}
\frac{\mathcal H_3^{(\ep,m)}(\pb; \xib)}{\mathcal H_0^{(\ep,m)}(\pb; \xib)} \\
|r_n(\pb;\xib)| &\leq C(\pb;\xib) \left[|\epsilonb_n|^3 + \frac{1}{n^2} + \frac{|\epsilonb_n|^2}{n} \right]
\end{align*}
for some constant $C(\pb;\xib)$. Then, to find the maximum point of the likelihood, we study the equation $\nabla_{\pb} \ell_{n,T}(\pb;\bbb) = \mathbf 0$, which reads
$$ 
\begin{cases}
n \dfrac{p_1 -\xi_1 -\epsilon_{n,1} + p_2(\xi_1 + \epsilon_{n,1}) - p_1(\xi_2 + \epsilon_{n,2})}{p_1(1-p_1 -p_2)} &= [\partial_{p_1} h_0^{(\ep,m)}(\pb; \xib)] + 
[\partial_{p_1}\mathbf h_1^{(\ep,m)}(\pb; \xib)] \cdot \epsilonb_n \\
&+ \ ^t\epsilonb_n [\partial_{p_1}\mathbb{L}_2^{(\ep,m)}(\pb; \xib)] \epsilonb_n +  [\partial_{p_1} h_3^{(\ep,m)}(\pb; \xib)] \frac 1n \\
&+  [\partial_{p_1}\mathbf h_4^{(\ep,m)}(\pb; \xib)] \cdot \frac{\epsilonb_n}{n} +  [\partial_{p_1}r_n(\pb;\xib)]\\
\\
n \dfrac{p_2 -\xi_2 -\epsilon_{n,2} + p_1(\xi_2 + \epsilon_{n,2}) - p_2(\xi_1 + \epsilon_{n,1})}{p_2(1-p_1 -p_2)} &= [\partial_{p_2} h_0^{(\ep,m)}(\pb; \xib)] + 
[\partial_{p_2}\mathbf h_1^{(\ep,m)}(\pb; \xib)] \cdot \epsilonb_n \\
&+ \ ^t\epsilonb_n [\partial_{p_2}\mathbb{L}_2^{(\ep,m)}(\pb; \xib)] \epsilonb_n +  [\partial_{p_2} h_3^{(\ep,m)}(\pb; \xib)] \frac 1n \\
&+  [\partial_{p_2}\mathbf h_4^{(\ep,m)}(\pb; \xib)] \cdot \frac{\epsilonb_n}{n} +  [\partial_{p_2}r_n(\pb;\xib)]\ .
\end{cases}
$$
The solution of such an equation can be obtained by inserting the expression $\xi_i + \epsilon_{n,i} + \Gamma_{0,i}(\xib) \frac{1}{n} + \boldsymbol{\Gamma}_{1,i}(\xib) \cdot \frac{\epsilonb_n}{n} + 
\frac 1n \ ^t \epsilonb_n \mathbb{V}_{2,i}(\xib) \epsilonb_n + \Delta_{n,i}(\xib; \epsilonb_n)$ in the place of $p_i$, $i=1,2$, and then expanding both members. By carefully carrying out all of these computations,
we are in a position to conclude that $\Gamma_{0,i}(\xib) = 0$, $\boldsymbol{\Gamma}_{1,i}(\xib) = \mathbf 0$ and $\mathbb{V}_{2,i}(\xib) = \mathbb O$, thus completing the proof.
\end{proof}

The last preparatory result is the multidimensional analogous of Lemma \ref{lem_gothic}.
\begin{lem} \label{lem_gothic}
Under the same assumption of Lemma \ref{lemma_rhoK} with $\xib = \pb_0$, there hold
\begin{align}
\Lambda_{k,n,T}(\pb_0; \bbb) &:= -2\log\left( \frac{L_{n,T}(\pb_0;\bbb)}{L_{n,T}(\hat{\pb}_{n,T}; \bbb)} \right) \nonumber \\
&= 2\log\left( \frac{L_n^{(0)}(\hat{\pb}_{n,T}; \bbb)}{L_n^{(0)}(\pb_0; \bbb)} \right) + 2\log\left( \frac{H_n^{(\ep,m)}(\hat{\pb}_{n,T}; \bbb)}{H_n^{(\ep,m)}(\pb_0; \bbb)} \right) \label{Dec_LambdanTK}\ .
\end{align}
Moreover, we have
\begin{equation} \label{Dec_Lambdan0K}
2\log\left( \frac{L_n^{(0)}(\hat{\pb}_{n,T}; \bbb)}{L_n^{(0)}(\pb_0; \bbb)} \right) = n\left\{\ ^t\epsilonb_n \mathbb{I}(\pb_0)\epsilonb_n  + O(|\epsilonb_n|^3)\right\} 
\end{equation}
where $\mathbb{I}(\pb_0)$ denotes the Fisher information matrix of the Multinomial model which, for $k=3$, reads
$$
\begin{pmatrix}
\dfrac{1}{p_{0,1}} + \dfrac{1}{1-p_{0,1}-p_{0,2}} & \dfrac{1}{1-p_{0,1}-p_{0,2}} \\
\dfrac{1}{1-p_{0,1}-p_{0,2}} & \dfrac{1}{p_{0,2}} + \dfrac{1}{1-p_{0,1}-p_{0,2}} 
\end{pmatrix}
$$
and
\begin{align}
2\log\left( \frac{H_n^{(\ep,m)}(\hat{\pb}_{n,T}; \bbb)}{H_n^{(\ep,m)}(\pb_0; \bbb)} \right) &= -2 \ ^t\epsilonb_n \mathbb{H}_2^{(\ep,m)}(\pb_0;\pb_0) \epsilonb_n + O(|\epsilonb_n|^3) \label{Dec_HnTK} \\
&= -\mathsf{Var}[L_1]\ ^t\epsilonb_n \mathbb{I}(\pb_0)^2 \epsilonb_n + O(|\epsilonb_n|^3) \ . \nonumber 
\end{align}
Therefore, putting $\boldsymbol{\zeta}_n := \sqrt{n} \epsilonb_n$, we get
\begin{align} 
\Lambda_{k,n,T}(\pb_0; \bbb) &= \ ^t\boldsymbol{\zeta}_n \left(\mathbb{I}(\pb_0) - \frac{\mathsf{Var}[L_1]}{n}\mathbb{I}(\pb_0)^2 \right) \boldsymbol{\zeta}_n + O\left(\frac{|\boldsymbol{\zeta}_n|^3}{\sqrt{n}}\right)
\label{final_expansion_LambdanTK} \\
\Lambda_{k,n,N}(\pb_0; \bbb) &= \ ^t\boldsymbol{\zeta}_n \mathbb{I}(\pb_0) \boldsymbol{\zeta}_n + O\left(\frac{|\boldsymbol{\zeta}_n|^3}{\sqrt{n}}\right)\ .
\label{final_expansion_LambdanNK} 
\end{align}
\end{lem}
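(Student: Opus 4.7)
The plan is to carry out the proof in three interlocking pieces, exactly mirroring the structure of Lemma \ref{lem_gothic} in the case $k=2$, but now using the multivariate preparatory results Lemmata \ref{lemma_likelihoodK}, \ref{lemma_rhoK} and \ref{lem:MLEK}. The identity (\ref{Dec_LambdanTK}) is essentially a tautology: it follows directly from the definition $\Lambda_{k,n,T}(\pb_0;\bbb) = 2\log(L_{n,T}(\hat{\pb}_{n,T};\bbb)/L_{n,T}(\pb_0;\bbb))$ combined with the factorization $L_{n,T} = L_n^{(0)} \cdot H_{n,T}^{(\ep,m)}$ provided by (\ref{LikTrueDecK}). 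The remaining work consists in expanding each of the two log-ratios up to the orders specified in (\ref{Dec_Lambdan0K}) and (\ref{Dec_HnTK}), and then combining them with the substitution $\boldsymbol{\zeta}_n := \sqrt{n}\,\epsilonb_n$.

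For (\ref{Dec_Lambdan0K}), I would invoke Lemma \ref{lem:MLEK}, which gives $\hat{\pb}_{n,T} = \pb_0 + \epsilonb_n + \mathbf R_{n,T}$ with $|\mathbf R_{n,T}| = O(|\epsilonb_n|^3/n)$. Since $L_n^{(0)}$ is the pure multinomial likelihood of parameter $(n,\pb)$, the classical identity yields
\begin{equation*}
2\log\frac{L_n^{(0)}(\hat{\pb}_{n,T};\bbb)}{L_n^{(0)}(\pb_0;\bbb)} = 2n\,\mathcal D_{KL}(\bbb/n\,\|\,\pb_0) + O(|\epsilonb_n|^3),
\end{equation*}
where the $O(|\epsilonb_n|^3)$ absorbs the error from replacing $\hat{\pb}_{n,T}$ by $\bbb/n = \pb_0 + \epsilonb_n$ (by convexity of $-\log$ this replacement costs at most a term of the stated order). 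A Taylor expansion of $\epsilonb_n \mapsto \mathcal D_{KL}(\pb_0 + \epsilonb_n\,\|\,\pb_0)$ at $\epsilonb_n = \mathbf 0$ kills the zeroth and first order contributions, and the Hessian at $\epsilonb_n = \mathbf 0$ is the Fisher information matrix $\mathbb I(\pb_0)$, so the leading term is $\tfrac{1}{2}\,^t\epsilonb_n\,\mathbb I(\pb_0)\,\epsilonb_n$, as required.

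For (\ref{Dec_HnTK}), I would apply the two-variable expansion (\ref{HnT_ep_mK}) at $\pb = \pb_0$ and at $\pb = \hat{\pb}_{n,T}$, both with $\xib = \pb_0$. At $\pb = \pb_0 = \xib$ the ratios inside $\mathcal H_i^{(\ep,m)}$ all reduce to $1$, so $\mathcal H_0^{(\ep,m)}(\pb_0;\pb_0) = 1$; moreover $\boldsymbol{\mathcal H}_1^{(\ep,m)}(\pb_0;\pb_0) = \E[\alphab_1(\pb_0,\mathbf L)] = \mathbf 0$ because each $L_i$ is centred. This gives $H_{n,T}^{(\ep,m)}(\pb_0;\bbb) = 1 + \,^t\epsilonb_n\,\mathbb H_2^{(\ep,m)}(\pb_0;\pb_0)\,\epsilonb_n + O(|\epsilonb_n|^3 + 1/n)$. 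The same expansion evaluated at $\pb = \hat{\pb}_{n,T}$ gives (as in the $k=2$ case) $H_{n,T}^{(\ep,m)}(\hat{\pb}_{n,T};\bbb) = 1 + O(1/n) + O(|\epsilonb_n|^3)$, because the Taylor variation of the ratios $\xi_i(1-|\pb|)/p_i(1-|\xib|)$ around $\pb = \xib$ exactly cancels the leading $|\epsilonb_n|^2$ contribution coming from $\mathbb H_2^{(\ep,m)}(\pb_0;\pb_0)$. Taking two logarithms and subtracting yields the claim.

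The main obstacle will be the identification $\mathbb H_2^{(\ep,m)}(\pb_0;\pb_0) = \tfrac{1}{2}\mathsf{Var}[L_1]\,\mathbb I(\pb_0)^2$, which provides the passage from the first to the second line of (\ref{Dec_HnTK}). For $k=3$ this is a direct moment computation: using independence of $L_1, L_2$, $\E[L_i] = 0$, $\E[L_i^2] = \mathsf{Var}[L_1]$, and elementary binomial identities such as $\E[\tbinom{L_i}{2}] = \tfrac{1}{2}\mathsf{Var}[L_1]$ and $\E[\tbinom{L_1+L_2+1}{2}] = \mathsf{Var}[L_1]$, the entries of $\E[\mathbb A_2(\pb_0,\mathbf L)]$ collapse to $\tfrac{1}{2}\mathsf{Var}[L_1]$ times the squared Fisher information, as may be verified by computing e.g.\ $[\mathbb I(\pb_0)^2]_{1,1} = 1/\xi_1^2 + 2/\xi_1(1-\xi_1-\xi_2) + 2/(1-\xi_1-\xi_2)^2$ and matching it term-by-term against the corresponding entry of $2\,\E[\mathbb A_2(\pb_0,\mathbf L)]$. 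For arbitrary $k>2$ the computation is analogous but notationally heavier; the key point is that the structural form of $\mathbb A_2(\xib,\lb)$ (sum of three contributions coming from the diagonal, off-diagonal, and residual cells) is preserved, and the same moment identities apply. Once (\ref{Dec_Lambdan0K}) and (\ref{Dec_HnTK}) are in hand, summing them and substituting $\boldsymbol\zeta_n = \sqrt n\,\epsilonb_n$ produces (\ref{final_expansion_LambdanTK}); (\ref{final_expansion_LambdanNK}) follows by specialising to the degenerate case $\mathbf L \equiv \mathbf 0$ (equivalently $\mathsf{Var}[L_1] = 0$), which reduces $L_{n,T}$ to the pure multinomial likelihood and kills the $\mathbb I(\pb_0)^2$-term.
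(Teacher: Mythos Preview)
Your proposal is correct and follows essentially the same route as the paper: the tautological decomposition via \eqref{LikTrueDecK}, the Taylor expansion of $\epsilonb_n \mapsto \mathcal D_{KL}(\pb_0+\epsilonb_n\,\|\,\pb_0)$ for the $L_n^{(0)}$ part, the evaluation of $H_{n,T}^{(\ep,m)}$ at both $\pb_0$ and $\hat{\pb}_{n,T}$ via \eqref{HnT_ep_mK}, and the moment identification $\mathbb H_2^{(\ep,m)}(\pb_0;\pb_0)=\tfrac12\mathsf{Var}[L_1]\,\mathbb I(\pb_0)^2$ (your explicit verification of the $(1,1)$-entry is exactly the ``direct computation'' the paper alludes to). One small imprecision: \eqref{final_expansion_LambdanNK} is not obtained by setting $\mathbf L\equiv\mathbf 0$ in the data---the na\"ive LR still uses the perturbed observation $\bbb$---but rather because the na\"ive likelihood is $L_n^{(0)}$ itself, so the $H$-term is absent and \eqref{Dec_Lambdan0K} (with $\hat{\pb}_{n,N}=\bbb/n$, which differs from $\hat{\pb}_{n,T}$ only by $O(|\epsilonb_n|^3/n)$) gives the result directly.
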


\begin{proof}
Since 
$$
\Lambda_{k,n,T}(\pb_0; \bbb) := -2\log\left(\frac{\sup_{\pb\in \{\pb_0\}} L_{n,T}(\pb;\bbb)}{\sup_{\pb\in \Delta_{k-1}} L_{n,T}(\pb; \bbb)} \right) = -2\log\left( \frac{L_{n,T}(\pb_0;\bbb)}{L_{n,T}(\hat{\pb}_{n,T}; \bbb)} \right)
$$ 
holds by definition, identity \eqref{Dec_LambdanTK} follows immediately from \eqref{LikTrueDecK}. 

Next, we derive \eqref{Dec_Lambdan0K} by combining \eqref{Definition_Ln0K} with 
\eqref{Expansion_MLE_trueK}. In any case, a quicker argument can be based on Remark \ref{rmk:Taylor}, according to which the right-hand side of \eqref{Dec_Lambdan0K} coincides with the Taylor expansion
of the map $\epsilonb_n \mapsto \mathcal D_{KL}(\pb_0 + \epsilonb_n\| \pb_0)$. Thus, the result to be proved boils down to the application of the well-known relationship between the Kullback-Leibler divergence 
and the Fisher information matrix in a regular parametric model. 

Coming to \eqref{Dec_HnTK}, we start by noticing that
\begin{align*}
\left(\frac{1-\hat{p}_{n,T,1}}{\hat{p}_{n,T,1}}\right)^{l_1} &= \left(\frac{1-p_{0,1}-p_{0,2}}{p_{0,1}}\right)^{l_1} \left[1 - l_1\left(\frac{\epsilon_{n,1}}{p_{0,1}}  + \frac{\epsilon_{n,1} + \epsilon_{n,2}}{1 - p_{0,1} - p_{0,2}}\right) \right. \\
& + l_1\left(\frac{\epsilon_{n,1}^2}{p_{0,1}^2}  + \frac{\epsilon_{n,1}(\epsilon_{n,1} + \epsilon_{n,2})}{p_{0,1}(1 - p_{0,1} - p_{0,2})}\right) \\
&\left. + \binom{l_1}{2} \left(\frac{\epsilon_{n,1}}{p_{0,1}}  + \frac{\epsilon_{n,1} + \epsilon_{n,2}}{1 - p_{0,1} - p_{0,2}}  \right)^2 + O(|\epsilonb_n|^3) \right] \\
\left(\frac{1-\hat{p}_{n,T,2}}{\hat{p}_{n,T,2}}\right)^{l_2} &= \left(\frac{1-p_{0,1}-p_{0,2}}{p_{0,2}}\right)^{l_2} \left[1 - l_2\left(\frac{\epsilon_{n,2}}{p_{0,2}}  + \frac{\epsilon_{n,1} + \epsilon_{n,2}}{1 - p_{0,1} - p_{0,2}}\right) \right. \\
& + l_2\left(\frac{\epsilon_{n,2}^2}{p_{0,2}^2}  + \frac{\epsilon_{n,2}(\epsilon_{n,1} + \epsilon_{n,2})}{p_{0,2}(1 - p_{0,1} - p_{0,2})}\right) \\
&\left. + \binom{l_2}{2} \left(\frac{\epsilon_{n,2}}{p_{0,2}}  + \frac{\epsilon_{n,1} + \epsilon_{n,2}}{1 - p_{0,1} - p_{0,2}}  \right)^2 + O(|\epsilonb_n|^3) \right] \ .
\end{align*}
These identities, combined with \eqref{Definition_HnTK} and \eqref{exp_rhonK}, yield
$$
H_n^{(\ep,m)}(\hat{\pb}_{n,T}; \bbb) = 1 + O\left(\frac 1n + |\epsilonb_n|^3\right)\ .
$$ 
The combination of this identity with \eqref{HnT_ep_mK}, in which $\pb = \xib = \pb_0$, yields \eqref{Dec_HnTK}. Incidentally, it is crucial in what follows to notice that $\mathbb{H}_2^{(\ep,m)}(\pb_0;\pb_0) =
\mathsf{Var}[L_1]\mathbb{I}(\pb_0)^2$, but this is only a matter of direct computations.

Finally, identity \eqref{final_expansion_LambdanTK} follows immediately from \eqref{Dec_LambdanTK}-\eqref{Dec_Lambdan0K}-\eqref{Dec_HnTK}.
\end{proof}

\subsection{Proof of Theorems \ref{teo32} and \ref{teo32n}}
We start again from \eqref{ApiuLK} where the random variables $\Abb_{n}$ and $\mathbf L$, defined on the probability space $(\Omega,\mathscr{F},\text{Pr})$, are independent, 
$\Abb_n \sim Mult(n,\pb)$ and $\mathbf L = (L_1, \dots, L_{k-1})$ 
is a random vector with independent components with each $L_i$ having the Laplace distribution \eqref{Laplace}. 
Thus, under the validity of $H_0$, we define the event $E_{0,n}\subset\Omega$ as
\begin{equation}\label{event}
E_{0,n,k}:=\left\{\omega\in\Omega\text{ $:$ }\left|\frac{\Abb_{n}(\omega)}{n} - \pb_0\right|\leq c(\pb_0)\left(\frac{\log n}{n}\right)^{1/2}\right\}
\end{equation}
for some $c(p_0)>0$. After recalling that each component of $\Abb_{n}$ is a Binomial random variable, we can reduce the problem to the analogous one already treated in Section \ref{sect:proof_teo31},
and we can find $c(\pb_0)$ so that
\begin{equation}\label{bound1K}
\mathrm{Pr}[E_{0,n,k}^c] \leq 2(k-1)n^{-3/2}.
\end{equation}
In this way, for $\ell=T,N$, we can write
$$
\F_{k,n,\ell}(t)= \text{Pr}[\Lambda_{k,n,\ell}(\pb_{0}) \leq t, E_{0,n,k}] + \text{Pr}[\Lambda_{k,n,\ell}(\pb_{0}) \leq t, E_{0,n,k}^c]
$$
with
$$
\mathrm{Pr}[\Lambda_{k,n,\ell}(\pb_{0}) \leq t, E_{0,n,k}^c] \leq 2(k-1)n^{-3/2}.
$$
The advantage of such a preliminary step is that, on $E_{0,n}$, the assumptions of all the Lemmata stated in the previous subsection are fulfilled and, by resorting to \eqref{final_expansion_LambdanTK}, we 
can write
\begin{align*}
&\left\{\Lambda_{k,n,T}(\pb_{0}) \leq t, E_{0,n,k}\right\} \\
&= \left\{\ ^t\Zb_n \left(\mathbb{I}(\pb_0) - \frac{\mathsf{Var}[L_1]}{n}\mathbb{I}(\pb_0)^2 \right) \Zb_n + O\left(\frac{|\Zb_n|^3}{\sqrt{n}}\right)\leq t, E_{0,n,k}\right\}
\end{align*}
where
\begin{equation} \label{eq:VnK}
\mathbf Z_n := \frac{\mathbf B_n - n\pb_0}{\sqrt{n}} =  \Sigma(\pb_0)^{1/2} \mathbf W_n + \frac{1}{\sqrt{n}} \mathbf L
\end{equation}
with
$$
\mathbf W_n := \Sigma(\pb_0)^{-1/2} \frac{\Abb_n - n\pb_0}{\sqrt{n}}
$$
and $\Sigma(\pb_0)$ is the covariance matrix of $\Abb_n$. 

As in the preliminary part of this second Appendix, we confine ourselves to dealing with the case of $k=3$. The following proposition represents the multidimensional analogous of the Berry-Esseen expansion already 
given in Proposition \ref{prp:BE}.

\begin{proposition} \label{prp:BEK}
Let $\phi_{\mathbf 0, \boldsymbol \Sigma}$ denote the density of the 2-dimensional Normal distribution with mean equal to $\mathbf 0$ and covariance matrix equal to $\Sigma(\pb_0)$, 
and let $\Phi_{\mathbf{0}, \boldsymbol{\Sigma}}$ stand for the associated distribution function. In addition, for any multi-index $\boldsymbol{\nu}$, denote by $\chi_{\boldsymbol{\nu}}$ the
$\boldsymbol{\nu}$-cumulant of the random vector $(A_{n,1} - p_{0,1}, A_{n,2} - p_{0,2})$, and put 
$\chi_m(\ub) := \sum_{|\boldsymbol{\nu}|=m} \frac{\chi_{\boldsymbol{\nu}}}{\boldsymbol{\nu}!} \ub^{\boldsymbol{\nu}}$ to set
$$
\tilde{P}_s(\ub : \{\chi_{\boldsymbol{\nu}}\}) := \sum_{(\ast)} \prod_{m=1}^s \frac{1}{k_m!} \left(\chi_{m+2}(\ub)\right)^{k_m} 
$$
where $(\ast)$ is a shorthand to indicate that the summation is extended to all the $s$-uples $(k_1, \dots, k_s)$ such that $k_1 + 2k_2 + \dots + sk_s = s$. Afterwords, define
$$
P_s(-\phi_{\mathbf{0}, \boldsymbol{\Sigma}} : \{\chi_{\boldsymbol{\nu}}\})(\yb) := \left(\frac{1}{2\pi}\right)^k \int_{\R^k} e^{-i \ub \cdot \yb} \tilde{P}_s(\ub : \{\chi_{\boldsymbol{\nu}}\}) \ud\ub
$$
and $P_s(-\Phi_{\mathbf{0}, \boldsymbol{\Sigma}} : \{\chi_{\boldsymbol{\nu}}\})(\xb) := \int_{-\infty}^{x_1}\int_{-\infty}^{x_2} P_s(-\phi_{\mathbf{0}, \boldsymbol{\Sigma}} : \{\chi_{\boldsymbol{\nu}}\})(\yb) \ud \yb$. 
Finally, set
$$
\Xi_1(\yb) := P_1(-\Phi_{\mathbf{0}, \boldsymbol{\Sigma}} : \{\chi_{\boldsymbol{\nu}}\})(\yb) - S_1(np_{0,1} + \sqrt{n}y_1) \partial_1\Phi_{\mathbf{0}, \boldsymbol{\Sigma}}(\yb) - S_1(np_{0,2} + \sqrt{n}y_2) \partial_2\Phi_{\mathbf{0}, \boldsymbol{\Sigma}}(\yb)
$$
and
\begin{align*}
\Xi_2(\yb) &= P_2(-\Phi_{\mathbf{0}, \boldsymbol{\Sigma}} : \{\chi_{\boldsymbol{\nu}}\})(\yb) \\
&- S_1(np_{0,1} + \sqrt{n}y_1) \partial_1P_1(-\Phi_{\mathbf{0}, \boldsymbol{\Sigma}} : \{\chi_{\boldsymbol{\nu}}\})(\yb)
- S_1(np_{0,2} + \sqrt{n}y_2) \partial_2P_1(-\Phi_{\mathbf{0}, \boldsymbol{\Sigma}} : \{\chi_{\boldsymbol{\nu}}\})(\yb) \\
& + S_2(np_{0,1} + \sqrt{n}y_1)\partial_1^2\Phi_{\mathbf{0}, \boldsymbol{\Sigma}}(\yb) + S_2(np_{0,2} + \sqrt{n}y_2)\partial_2^2\Phi_{\mathbf{0}, \boldsymbol{\Sigma}}(\yb) \\
&+ S_1(np_{0,1} + \sqrt{n}y_1)S_1(np_{0,2} + \sqrt{n}y_2)\partial_{1,2}^2\Phi_{\mathbf{0}, \boldsymbol{\Sigma}}(\yb)
\end{align*}
where $S_1$ and $S_2$ are the same as in Proposition \ref{prp:BE}. Then, for any $\xb \in \R^2$, there holds
\begin{equation} \label{BEK}
\G_n(\xb) = \Phi_{\mathbf{0}, \boldsymbol{\Sigma}}(\xb) +\frac{1}{\sqrt{n}} \Xi_1(\xb) + \frac{1}{2n} \mathsf{Var}(L_1) \left[\partial_1^2\Phi_{\mathbf{0}, \boldsymbol{\Sigma}}(\xb) + \partial_2^2\Phi_{\mathbf{0}, \boldsymbol{\Sigma}}(\xb) \right] + \frac{1}{n} \Xi_2(\xb) + R_n(\xb)
\end{equation}
with the remainder term $R_n$ satisfying an inequality like 
$$
\sup_{\xb \in \R^2} \{ |R_n(\xb)| (1+|\xb|)^5\} \leq \frac{C(\pb_0, m, \varepsilon)}{n^{3/2}}
$$ 
for some suitable constant $C(\pb_0, m, \varepsilon)$ independent of $n$. 
\end{proposition}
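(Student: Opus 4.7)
The plan is to mimic the strategy used for Proposition \ref{prp:BE}, with the one-dimensional Edgeworth expansion of Petrov replaced by its $d$-dimensional lattice counterpart, and then to exploit the convolution structure $\Zb_n = \Sigma(\pb_0)^{1/2}\mathbf{W}_n + n^{-1/2} \mathbf{L}$ via conditioning on $\mathbf{L}$ and a Taylor expansion that averages out under the symmetry of the Laplace distribution.

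The first step is to write down a Berry--Esseen/Edgeworth expansion for the lattice vector $\Abb_n - n\pb_0$, which is a sum of $n$ i.i.d. $2$-dimensional lattice random variables with minimal lattice $\Z^2$ and common mean zero. I would invoke the sharp multidimensional Edgeworth expansion for lattice sums (e.g. Bhattacharya--Rao, or the extension of Petrov's Theorem 6 of Ch.~VI quoted in the proof of Proposition~\ref{prp:BE}), which asserts that the distribution function $\F_n$ of $\mathbf{W}_n$ admits the expansion
\begin{displaymath}
\F_n(\yb) = \Phi_{\mathbf{0},\boldsymbol\Sigma}(\yb) + \sum_{j=1}^{2} n^{-j/2} P_j(-\Phi_{\mathbf{0},\boldsymbol\Sigma} : \{\chi_{\boldsymbol \nu}\})(\yb) + \Lcr_n(\yb) + \Tcr_n(\yb),
\end{displaymath}
where the lattice correction $\Lcr_n(\yb)$ is the sum of the contributions $n^{-1/2} S_1(np_{0,i}+\sqrt{n}\, y_i)\partial_i\Phi_{\mathbf{0},\boldsymbol\Sigma}(\yb)$, the second-order terms $n^{-1}S_2(np_{0,i}+\sqrt{n}\,y_i)\partial_i^2\Phi_{\mathbf{0},\boldsymbol\Sigma}(\yb)$, the cross terms $n^{-1}S_1(np_{0,1}+\sqrt{n}\,y_1) S_1(np_{0,2}+\sqrt{n}\,y_2)\partial_{1,2}^2\Phi_{\mathbf{0},\boldsymbol\Sigma}(\yb)$, and the ``Edgeworth$\times$lattice'' interaction $n^{-1}S_1(np_{0,i}+\sqrt{n}\,y_i)\partial_i P_1(-\Phi_{\mathbf{0},\boldsymbol\Sigma} : \{\chi_{\boldsymbol \nu}\})(\yb)$. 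The remainder $\Tcr_n(\yb)$ satisfies the uniform bound $|\Tcr_n(\yb)|(1+|\yb|)^5 \leq J(\pb_0) n^{-3/2}$.

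Next I would use the independence of $\Abb_n$ and $\mathbf{L}$ together with \eqref{eq:VnK} to express the distribution function of $\Zb_n$ as the average
\begin{displaymath}
\G_n(\xb) = \sum_{\lb \in \{-m,\dots,m\}^2} \frac{e^{-\varepsilon(|l_1|+|l_2|)}}{\cepsm^{\,2}}\, \F_n\!\left(\Sigma(\pb_0)^{-1/2}\bigl(\xb - \lb/\sqrt{n}\bigr)\right).
\end{displaymath}
The crucial observation, mirroring the one-dimensional proof, is that $S_j(np_{0,i}+\sqrt{n}\, x_i - l_i) = S_j(np_{0,i}+\sqrt{n}\, x_i)$ since $l_i \in \Z$ and $S_j$ has period one. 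Therefore the shift by $\lb/\sqrt{n}$ affects only the smooth pieces $\Phi_{\mathbf{0},\boldsymbol\Sigma}$, $P_1$, $P_2$, and their derivatives. For each such smooth factor $\Phi^{(k)}$ evaluated at $\Sigma(\pb_0)^{-1/2}(\xb - \lb/\sqrt{n})$, I would Taylor expand to third order in $\lb/\sqrt{n}$; summing against the truncated Laplace weights and exploiting that $\E[L_i]=0$ while $\E[L_iL_j] = \mathsf{Var}(L_1)\delta_{ij}$ kills the $n^{-1/2}$-term and reduces the $n^{-1}$-term of the Hessian to the isotropic quantity $\tfrac{1}{2}\mathsf{Var}(L_1)\bigl(\partial_1^2 + \partial_2^2\bigr)$ applied to $\Phi_{\mathbf{0},\boldsymbol\Sigma}$. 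The contributions from expanding $P_1$ and from the mixed lattice/smooth terms are of order $n^{-3/2}$ and can be absorbed into the remainder, provided one verifies the uniform tail bound coming from $\E[|\mathbf{L}|^3]<\infty$ (trivial since $\mathbf{L}$ is bounded).

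The main obstacle I anticipate is the bookkeeping of the lattice correction terms in dimension two: in contrast to the one-dimensional case, the product of two independent lattice marginals creates the genuinely two-dimensional correction $n^{-1} S_1(\cdot)S_1(\cdot)\partial_{1,2}^2\Phi_{\mathbf{0},\boldsymbol\Sigma}$, and one must justify carefully that the Fourier-analytic argument behind the lattice Edgeworth expansion indeed produces exactly this mixed term and no others of the same order. A secondary technical point is to ensure that after the Taylor-in-$\lb/\sqrt{n}$ step the remainders retain the uniform polynomial tail decay $(1+|\xb|)^{-5}$; this follows from the Gaussian decay of the derivatives of $\Phi_{\mathbf{0},\boldsymbol\Sigma}$ and the fact that $|\lb|\leq m\sqrt{2}$ is bounded, so the shifts $\lb/\sqrt{n}$ cannot degrade the decay rate. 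Gathering the terms according to their order in $n^{-1/2}$ and identifying them with $\Xi_1$, the Laplace correction, and $\Xi_2$ in the statement yields \eqref{BEK}.
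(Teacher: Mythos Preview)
Your plan matches the paper's proof essentially step for step: the paper also writes $\G_n(\xb)=\E\!\left[\F_n\!\left(\xb-\Lb/\sqrt{n}\right)\right]$ via independence, invokes the multidimensional lattice Edgeworth expansion from Bhattacharya--Rao (Theorem~23.1 with $s=5$), uses the $1$-periodicity of the $S_k$'s to freeze the lattice corrections under integer shifts, Taylor-expands only the smooth pieces $D^{\boldsymbol\alpha}\Phi_{\mathbf 0,\boldsymbol\Sigma}$ and $D^{\boldsymbol\alpha}P_j$, and then averages against the symmetric Laplace weights so that only the $\tfrac{1}{2n}\mathsf{Var}(L_1)(\partial_1^2+\partial_2^2)\Phi_{\mathbf 0,\boldsymbol\Sigma}$ term survives at order $n^{-1}$.

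One small slip to fix in your write-up: you define $\F_n$ as the distribution function of the \emph{standardized} vector $\mathbf W_n$ and then write $\G_n(\xb)=\sum_{\lb}\cdots\,\F_n\!\bigl(\Sigma(\pb_0)^{-1/2}(\xb-\lb/\sqrt{n})\bigr)$, but componentwise inequalities do not commute with the non-diagonal map $\Sigma(\pb_0)^{-1/2}$, so that identity is false as stated. The paper sidesteps this by taking $\F_n$ to be the distribution function of the \emph{unnormalized} sum $(\Abb_n-n\pb_0)/\sqrt n$ (whose Edgeworth expansion already has $\Phi_{\mathbf 0,\boldsymbol\Sigma}$ as leading term, consistent with your displayed expansion), so that $\G_n(\xb)=\E[\F_n(\xb-\Lb/\sqrt n)]$ holds verbatim; adopt that convention and the rest of your argument goes through unchanged.
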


\begin{proof}
We start by putting 
$$
\F_n(\xb) := \text{Pr}\left[\sqrt{n}\left(\frac1n \Abn - \mathbf{p}_0\right) \leq \xb\right] = \text{Pr}\left[\frac{1}{\sqrt{n}}\left(\Abn - n\mathbf{p}_0\right) \leq \xb\right]\ . 
$$
By independence, we have
\begin{align*}
\G_n(\xb) &= \left(\frac{1}{\cepsm}\right)^2 \sum_{l_1 = -m}^m \sum_{l_2 = -m}^m e^{-\varepsilon (|l_1| + |l_2|)} \F_n\left(x_1 - \frac{l_1}{\sqrt{n}}, x_2 - \frac{l_2}{\sqrt{n}}\right) \\
&= \E\left[\F_n\left(\xb - \frac{1}{\sqrt{n}}\Lb\right)\right]\ .
\end{align*}
At this stage, we exploit the well-known asymptotic expansions of $\F_n$ displayed, e.g., in Section 23 of \citet{Bhatt(10)}. We put $s=5$ in Theorem 23.1 to obtain
$$
\F_n(\yb) = \Phi_{\mathbf{0}, \boldsymbol{\Sigma}}(\yb) + \frac{1}{\sqrt{n}} \Xi_1(\yb) +  \frac{1}{n} \Xi_2(\yb) + R_n(\xb)
$$
where $R_n$ is a remainder term satisfying an inequality like 
$$
\sup_{\xb \in \R^2} \{ |R_n(\xb)| (1+|\xb|)^5\} \leq \frac{C(\pb_0, m, \varepsilon)}{n^{3/2}}
$$ 
for some suitable constant $C(\pb_0, m, \varepsilon)$ independent of $n$. 
The key remark is about the modification of the terms of the type $S_k(np_{0,i} + \sqrt{n}y_i)$ after the substitution $y_i = x_i - \frac{l_i}{\sqrt{n}}$. Indeed, we have
$$
S_k(np_{0,i} + \sqrt{n}y_i) = S_k(np_{0,i} + \sqrt{n}x_i - l_i) = S_k(np_{0,i} + \sqrt{n}x_i)
$$
because of the periodicity of the $S_k$'s. Therefore, the substitution $y_i = x_i - \frac{l_i}{\sqrt{n}}$ affects the functions $\Xi_1$ and  $\Xi_2$ only in the terms involving $\Phi_{\mathbf{0}, \boldsymbol{\Sigma}}$, $P_1(-\Phi_{\mathbf{0}, \boldsymbol{\Sigma}} : \{\chi_{\boldsymbol{\nu}}\})$, $P_2(-\Phi_{\mathbf{0}, \boldsymbol{\Sigma}} : \{\chi_{\boldsymbol{\nu}}\})$, $P_3(-\Phi_{\mathbf{0}, \boldsymbol{\Sigma}} : \{\chi_{\boldsymbol{\nu}}\})$
and their derivatives. Since these functions are smooth, we can apply the Taylor formula to get
\begin{align*}
D^{\boldsymbol{\alpha}}\Phi_{\mathbf{0}, \boldsymbol{\Sigma}}\left(\xb - \frac{1}{\sqrt{n}}\mathbf{l}\right) &= D^{\boldsymbol{\alpha}}\Phi_{\mathbf{0}, \boldsymbol{\Sigma}}(\xb) - \frac{l_1}{\sqrt{n}} \partial_1D^{\boldsymbol{\alpha}}\Phi_{\mathbf{0}, \boldsymbol{\Sigma}}(\xb) - \frac{l_2}{\sqrt{n}} \partial_2D^{\boldsymbol{\alpha}}\Phi_{\mathbf{0}, \boldsymbol{\Sigma}}(\xb) \\
&+ \frac{l_1^2}{2n} \partial_1^2D^{\boldsymbol{\alpha}}\Phi_{\mathbf{0}, \boldsymbol{\Sigma}}(\xb) + \frac{l_2^2}{2n} \partial_2^2D^{\boldsymbol{\alpha}}\Phi_{\mathbf{0}, \boldsymbol{\Sigma}}(\xb)
+ \frac{l_1 l_2}{n} \partial_{1,2}^2D^{\boldsymbol{\alpha}}\Phi_{\mathbf{0}, \boldsymbol{\Sigma}}(\xb) \\
& + O\left(\frac{1}{n^{3/2}}\right)
\end{align*}
for any multi-index $\boldsymbol{\alpha}$, and analogous expansions for $D^{\boldsymbol{\alpha}}P_k(-\Phi_{\mathbf{0}, \boldsymbol{\Sigma}} : \{\chi_{\boldsymbol{\nu}}\})\left(\xb - \frac{1}{\sqrt{n}}\mathbf{l}\right)$.
Now, the assumption on the distribution of $\Lb$ entails that $\E[\Lb^{\boldsymbol{\alpha}}] = \E[L_1^{\alpha_1} L_2^{\alpha_2}] = 0$ as soon as either $\alpha_1$ or $\alpha_2$ is odd.
Whence,
\begin{align*}
&\E\left[D^{\boldsymbol{\alpha}}\Phi_{\mathbf{0}, \boldsymbol{\Sigma}}\left(\xb - \frac{1}{\sqrt{n}}\Lb\right)\right] \\
&= D^{\boldsymbol{\alpha}}\Phi_{\mathbf{0}, \boldsymbol{\Sigma}}(\xb) + \frac{1}{2n} \mathsf{Var}(L_1) 
\left[\partial_1^2D^{\boldsymbol{\alpha}}\Phi_{\mathbf{0}, \boldsymbol{\Sigma}}(\xb) + \partial_2^2D^{\boldsymbol{\alpha}}\Phi_{\mathbf{0}, \boldsymbol{\Sigma}}(\xb) \right] + O\left(\frac{1}{n^{3/2}}\right)
\end{align*}
for any multi-index $\boldsymbol{\alpha}$, and analogous expansions for the term
$$
\E\left[D^{\boldsymbol{\alpha}}P_k(-\Phi_{\mathbf{0}, \boldsymbol{\Sigma}} : \{\chi_{\boldsymbol{\nu}}\})\left(\xb - \frac{1}{\sqrt{n}}\Lb\right)\right]\ .
$$
This completes the proof. 
\end{proof}

At this stage, like in the previous proof of Theorem \ref{teo31}, the thesis of Theorems \ref{teo32} and \ref{teo32n} follow from the combination of the Berry-Esseen expansion \eqref{BEK} with identity \eqref{final_expansion_LambdanTK}. Indeed, \eqref{BEK} shows that the probability distribution of $\mathbf Z_n$ has both an absolutely continuous part as well as a singular part. As before, the core of the 
proof hinges on the study of the absolutely continuous part, which is significantly affected by the Laplace perturbation because of the term $\frac{1}{2n} \mathsf{Var}[L_1] \left[\partial_1^2\Phi_{\mathbf{0}, \boldsymbol{\Sigma}}(\xb) + \partial_2^2\Phi_{\mathbf{0}, \boldsymbol{\Sigma}}(\xb)\right]$. Thus, the sum 
$$
\Phi_{\mathbf{0}, \boldsymbol{\Sigma}}(\xb) + \frac{1}{2n} \mathsf{Var}[L_1] \left[\partial_1^2\Phi_{\mathbf{0}, \boldsymbol{\Sigma}}(\xb) + \partial_2^2\Phi_{\mathbf{0}, \boldsymbol{\Sigma}}(\xb)\right]
$$
yields the following density (with respect to the 2-dimensional Lebesgue measure)
\begin{align*}
g_n(\xb) &= \varphi_{\boldsymbol \Sigma(\pb_0)}(\xb) + \frac{\mathsf{Var}[L_1]}{2n} \Delta_{\xb} \varphi_{\boldsymbol \Sigma(\pb_0)}(\xb) \\
&= \varphi_{\boldsymbol \Sigma(\pb_0)}(\xb) \left[1 - \frac{\mathsf{Var}[L_1]}{2n}\mathrm{tr}\left(\boldsymbol \Sigma(\pb_0)^{-1}\right) + \frac{\mathsf{Var}[L_1]}{2n} \ ^t \xb \Sigma(\pb_0)^{-2} \xb\right]
\end{align*}
where $\varphi_{\boldsymbol \Sigma}$ denotes the density of the 2-dimensional Normal distribution with mean $\mathbf 0$ and covariance matrix $\boldsymbol \Sigma$, $\boldsymbol  \Sigma(\pb_0)$ 
is the covariance matrix of the random vector $\Abb_n \sim Mult(n, \pb_0)$, $\Delta_{\xb}$ stands for the Laplacian operator, and $\mathrm{tr}$ is the trace operator. At this stage, it is enough to notice that
the principal term of the expression of $\F_{k,n,T}(t)$ is given by the following integral
\begin{align*}
&\int_{\left\{ ^t \xb\left(\mathbb{I}(\pb_0) - \frac{\mathsf{Var}[L_1]}{n}\mathbb{I}(\pb_0)^2 \right) \xb \leq t \right\}} g_n(\xb) \ddr\xb \\
&= \int_{\{ |\yb|^2 \leq t \}} g_n\left(\left(\mathbb{I}(\pb_0) - \frac{\mathsf{Var}[L_1]}{n}\mathbb{I}(\pb_0)^2 \right)^{-1/2} \yb \right) \frac{\ddr\yb}{\sqrt{\mathrm{det}\left(\mathbb{I}(\pb_0) - \frac{\mathsf{Var}[L_1]}{n}\mathbb{I}(\pb_0)^2\right)}}\ .
\end{align*}
To handle the last integral, we start by noticing that 
$$
\left(\mathbb{I}(\pb_0) - \frac{\mathsf{Var}[L_1]}{n}\mathbb{I}(\pb_0)^2 \right)^{-1/2} = \mathbb{I}(\pb_0)^{-1/2} + \frac{\mathsf{Var}[L_1]}{2n}\mathbb{I}(\pb_0)^{1/2} + O\left(\frac{1}{n^2} \right)\ . 
$$
Thus, combining the last identities, we get
\begin{align*}
&g_n\left(\left(\mathbb{I}(\pb_0) - \frac{\mathsf{Var}[L_1]}{n}\mathbb{I}(\pb_0)^2 \right)^{-1/2} \yb \right) \\
&= g_n\left(\mathbb{I}(\pb_0)^{-1/2}\yb + \frac{\mathsf{Var}[L_1]}{2n}\mathbb{I}(\pb_0)^{1/2}\yb\right) + O\left(\frac{1}{n^2} \right)\\
&= \varphi_{\boldsymbol \Sigma(\pb_0)}\left(\mathbb{I}(\pb_0)^{-1/2}\yb + \frac{\mathsf{Var}[L_1]}{2n}\mathbb{I}(\pb_0)^{1/2}\yb\right) \times \\
&\times \left[1 - \frac{\mathsf{Var}[L_1]}{2n}\mathrm{tr}\left(\boldsymbol \Sigma(\pb_0)^{-1}\right) + \frac{\mathsf{Var}[L_1]}{2n} \ ^t \left(\mathbb{I}(\pb_0)^{-1/2}\yb \right) \boldsymbol{\Sigma}(\pb_0)^{-2} 
\left(\mathbb{I}(\pb_0)^{-1/2}\yb \right)\right] + O\left(\frac{1}{n^2} \right)\ .
\end{align*}
In view of the identity $\nabla_{\xb}\varphi_{\boldsymbol \Sigma}(\xb) = -\varphi_{\boldsymbol \Sigma}(\xb) \boldsymbol \Sigma^{-1}\xb$, we can use the Taylor formula to write
\begin{align*}
&\varphi_{\boldsymbol \Sigma(\pb_0)}\left(\mathbb{I}(\pb_0)^{-1/2}\yb + \frac{\mathsf{Var}[L_1]}{2n}\mathbb{I}(\pb_0)^{1/2}\yb\right) \\
&=\varphi_{\boldsymbol \Sigma(\pb_0)}\left(\mathbb{I}(\pb_0)^{-1/2}\yb\right)
+ \frac{\mathsf{Var}[L_1]}{2n}\mathbb{I}(\pb_0)^{1/2}\yb \cdot \nabla \varphi_{\boldsymbol \Sigma(\pb_0)}\left(\mathbb{I}(\pb_0)^{-1/2}\yb\right) + O\left(\frac{1}{n^2} \right)\\
&=\varphi_{\boldsymbol \Sigma(\pb_0)}\left(\mathbb{I}(\pb_0)^{-1/2}\yb\right) \left[1 -  \frac{\mathsf{Var}[L_1]}{2n}\mathbb{I}(\pb_0)^{1/2}\yb \cdot \left(\boldsymbol \Sigma^{-1}(\pb_0)\mathbb{I}(\pb_0)^{-1/2}\yb \right)\right]\ .
\end{align*}
Now, it is crucial to observe that, for the Multinomial model, it holds $\mathbb{I}(\pb_0)^{-1} = \Sigma(\pb_0)$. Whence,
\begin{align*}
\varphi_{\boldsymbol \Sigma(\pb_0)}\left(\mathbb{I}(\pb_0)^{-1/2}\yb\right) &= \frac{1}{\sqrt{\mathrm{det}(\boldsymbol \Sigma(\pb_0))}} \varphi(\yb) \\
^t \left(\mathbb{I}(\pb_0)^{-1/2}\yb \right) \boldsymbol{\Sigma}(\pb_0)^{-2} \left(\mathbb{I}(\pb_0)^{-1/2}\yb \right) &=\ ^t\yb \mathbb{I}(\pb_0) \yb \\
\mathbb{I}(\pb_0)^{1/2}\yb \cdot \left(\boldsymbol \Sigma^{-1}(\pb_0)\mathbb{I}(\pb_0)^{-1/2}\yb \right) &=\ ^t\yb \mathbb{I}(\pb_0) \yb
\end{align*}
where $\varphi$ stands for the density of the standard Normal distribution. In conclusion, we have
\begin{align*}
&g_n\left(\left(\mathbb{I}(\pb_0) - \frac{\mathsf{Var}[L_1]}{n}\mathbb{I}(\pb_0)^2 \right)^{-1/2} \yb \right) \\
&= \frac{\varphi(\yb) }{\sqrt{\mathrm{det}(\boldsymbol \Sigma(\pb_0))}} \left[1 -  \frac{\mathsf{Var}[L_1]}{2n}\ ^t\yb \mathbb{I}(\pb_0) \yb\right]  \times \\
&\times \left[1 - \frac{\mathsf{Var}[L_1]}{2n}\mathrm{tr}\left(\boldsymbol \Sigma(\pb_0)^{-1}\right) + \frac{\mathsf{Var}[L_1]}{2n}\ ^t\yb \mathbb{I}(\pb_0) \yb \right] + O\left(\frac{1}{n^2} \right) \\
&= \frac{\varphi(\yb) }{\sqrt{\mathrm{det}(\boldsymbol \Sigma(\pb_0))}} \left[1 - \frac{\mathsf{Var}[L_1]}{2n}\mathrm{tr}\left(\boldsymbol \Sigma(\pb_0)^{-1}\right) \right] + O\left(\frac{1}{n^2} \right)\ .
\end{align*}
Then, invoking the Taylor formula for the determinant operator, we get
$$
\frac{1}{\sqrt{\mathrm{det}\left(\mathbb{I}(\pb_0) - \frac{\mathsf{Var}[L_1]}{n}\mathbb{I}(\pb_0)^2\right)}} = \sqrt{\mathrm{det}(\boldsymbol \Sigma(\pb_0))} 
\left[1 + \frac{\mathsf{Var}[L_1]}{2n}\mathrm{tr}\left(\boldsymbol \Sigma(\pb_0)^{-1}\right) \right] + O\left(\frac{1}{n^2} \right)
$$
which entails 
\begin{align*}
\int_{\left\{ ^t \xb\left(\mathbb{I}(\pb_0) - \frac{\mathsf{Var}[L_1]}{n}\mathbb{I}(\pb_0)^2 \right) \xb \leq t \right\}} g_n(\xb) \ddr\xb &= \int_{\{ |\yb|^2 \leq t \}} \varphi(\yb)\ddr\yb + O\left(\frac{1}{n^2} \right)\\
&= \K_2(t) + O\left(\frac{1}{n^2} \right)\ .
\end{align*}
This argument proves rigorously the presence pf the terms $\K_{k-1}(t)$ in \eqref{BE_LRTk}. Now, we take cognizance that the expressions of $c_{k,1}(t;\pb_{0})$, $c_{k,\ast}(t;\pb_0; m,\varepsilon)$,
$c_{k,2}(t;\mathbf{p}_{0})$ and $R_{k,n,T}(t)$ in \eqref{BE_LRTk} ensue from the various terms in \eqref{BEK}, according to the above line of reasoning. In particular, $c_{k,1}(t;\pb_{0})$ ensues from
the term $P_1(-\Phi_{\mathbf{0}, \boldsymbol{\Sigma}} : \{\chi_{\boldsymbol{\nu}}\})(\yb)$ that figures in the expression of $\Xi_1$, while $c_{k,\ast}(t;\pb_0; m,\varepsilon)$ corresponds to the manipulation of the
quantity $- S_1(np_{0,1} + \sqrt{n}y_1) \partial_1\Phi_{\mathbf{0}, \boldsymbol{\Sigma}}(\yb) - S_1(np_{0,2} + \sqrt{n}y_2) \partial_2\Phi_{\mathbf{0}, \boldsymbol{\Sigma}}(\yb)$ which also figures in the 
expression of $\Xi_1$. Furthermore, $c_{k,2}(t;\mathbf{p}_{0})$ ensues from the expression of $\Xi_2$. However, what is crucial is just to remark once again is that the additional term
$$
\frac{1}{2n} \mathsf{Var}[L_1] \left[\partial_1^2\Phi_{\mathbf{0}, \boldsymbol{\Sigma}}(\xb) + \partial_2^2\Phi_{\mathbf{0}, \boldsymbol{\Sigma}}(\xb) \right]
$$
that appears in \eqref{BEK}, which depends significantly on the Laplace perturbation, is no more present in \eqref{BE_LRTk}.

It remains to justify the expression of $\F_{k,n,N}$, which follows from the combination of the Berry-Esseen expansion \eqref{BEK} with identity \eqref{final_expansion_LambdanNK}. Indeed, 
the core of the argument consists in the study of the integral $\int_{\{ ^t \xb\mathbb{I}(\pb_0)\xb \leq t \}} g_n(\xb) \ddr\xb$, which equal to
\begin{align*}
&\int_{\{ |\yb|^2 \leq t \}} g_n\left(\mathbb{I}(\pb_0)^{-1/2} \yb \right) \frac{\ddr\yb}{\sqrt{\mathrm{det}\left(\mathbb{I}(\pb_0)\right)}}\\
&= \int_{\{ |\yb|^2 \leq t \}} \varphi_{\boldsymbol \Sigma(\pb_0)}\left(\mathbb{I}(\pb_0)^{-1/2} \yb \right) \left[1 - \frac{\mathsf{Var}[L_1]}{2n}\mathrm{tr}\left(\boldsymbol \Sigma(\pb_0)^{-1}\right) \right. \\
&\left. + \frac{\mathsf{Var}[L_1]}{2n} \ ^t\left(\mathbb{I}(\pb_0)^{-1/2} \yb \right)  \Sigma(\pb_0)^{-2} \left(\mathbb{I}(\pb_0)^{-1/2} \yb \right) \right]\frac{\ddr\yb}{\sqrt{\mathrm{det}\left(\mathbb{I}(\pb_0)\right)}}+ O\left(\frac{1}{n^2} \right) \\
&= \int_{\{ |\yb|^2 \leq t \}} \varphi(\yb) \left[1 - \frac{\mathsf{Var}[L_1]}{2n}\mathrm{tr}\left(\boldsymbol \Sigma(\pb_0)^{-1}\right) + \frac{\mathsf{Var}[L_1]}{2n}\ ^t\yb \mathbb{I}(\pb_0) \yb \right] \ddr\yb
+ O\left(\frac{1}{n^2} \right)\ .
\end{align*}
Then, we can pass to polar coordinates and notice that
$$
\int_{\mathbb S(k-2)}\ ^t \boldsymbol \sigma \mathbb{I}(\pb_0) \boldsymbol \sigma \ddr \boldsymbol\sigma = \mathrm{tr}(\mathbb{I}(\pb_0)) \frac{|\mathbb S(k-2)|}{k-1}
$$
with $|\mathbb S(k-2)| = \frac{2\pi^{(k-1)/2}}{\Gamma\left(\frac{k-1}{2}\right)}$. Finally, we have
\begin{align*}
&\frac{\mathsf{Var}[L_1]}{2n} \mathrm{tr}(\mathbb{I}(\pb_0)) \int_0^t \frac{\left(\frac 12\right)^{(k-1)/2}}{\Gamma\left(\frac{k-1}{2}\right)} e^{-u/2} u^{(k-3)/2} \left(-1 + \frac{u}{k-1}\right) \ddr u \\
&= - \frac{\mathsf{Var}[L_1]}{n} \mathrm{tr}(\mathbb I(\pb_0)) \frac{\left(\frac 12\right)^{(k+1)/2}}{\Gamma\left(\frac{k+1}{2}\right)}\text{e}^{-t/2} t^{(k-1)/2}
\end{align*}
which completes the proof of \eqref{BE_LRNk}.


\subsection{Proof of \eqref{main_ld}} Upon putting $\mu_n(\cdot) := \mathrm{Pr} \left[\frac{1}{\sqrt{n}} \sum_{i=1}^n \mathbf X_i \in \cdot \right]$, we prove that 
\begin{align}
& \mu_n( \sqrt{n} \boldsymbol \xi + \mathscr E)\nonumber \\
&= \exp\left\{-n[\hat{\mathbf z} \cdot \nabla L(\hat{\mathbf z}) - L(\hat{\mathbf z})] + \sqrt{n} 
\min_{\mathbf v \in \mathscr E} \hat{\mathbf z} \cdot \mathbf v\right\} n^{-(d+1)/4}
[1 + o(1)] \label{AppB1} 
\end{align}
is valid as $n \rightarrow +\infty$, where $d$ is the dimension of the random vectors $\mathbf X_i$. The proof is a small variation of a classical argument developed, e.g., in Chapter VIII of  
\citet{Pet(75)}, in \citet{vBa(67)}, or in \citet{Ale(83)}.

Letting the distribution of $\Xb_1$ be denoted by $\mu$, we introduce the new probability measure $\nu_{\zb}(A) := e^{-L(\zb)} \int_A e^{\zb \cdot \yb} \mu(\ddr\yb)$ for some $|\zb| < H$ and, then, a sequence
$\{\Yb_i\}_{i \geq 1}$ of i.i.d. random vectors with distribution $\nu_{\zb}$. We also put $\mb_{\zb} := \E[\Yb_1] = \nabla L(\zb)$ and $\Cb_{\zb} := Cov[\Yb_1] = \mathrm{Hess}[L](\zb)$. Hence, setting
$\nu_{\zb,n}(\cdot) := \mathrm{Pr}[\sum_{i=1}^n \Yb_i \in \cdot]$, we find that 
$$
\mathrm{Pr} \left[\sum_{i=1}^n \mathbf X_i \in A\right] = e^{nL(\zb)} \int_A e^{-\zb \cdot \yb} \nu_{\zb,n}(\ddr\yb)
$$
yielding in turn, after some manipulations, that
\begin{equation} \label{AppB2}
\mu_n( \sqrt{n} \boldsymbol \xi + \mathscr E) = e^{nL(\zb) - n\zb\cdot\mb_{\zb}} \int_{\Cb_{\zb}^{-1/2}[\sqrt{n}(\boldsymbol \xi - \mb_{\zb}) + \mathscr E]} 
e^{-\sqrt{n}\ ^t\zb \Cb_{\zb}^{1/2}\yb} \gamma_{\zb,n}(\ddr\yb)
\end{equation}
with
$$
\gamma_{\zb,n}(\cdot) := \mathrm{Pr} \left[\frac{1}{\sqrt{n}} \Cb_{\zb}^{-1/2} \sum_{i=1}^n (\mathbf Y_i - \mb_{\zb}) \in \cdot\right]\ .
$$
At this stage, exploiting the assumption of the existence of $\hat{\zb}$ for which $\nabla L(\hat{\zb}) = \mb_{\hat{\zb}} = \boldsymbol \xi$, we choose $\zb = \hat{\zb}$ so that \eqref{AppB2} becomes
$$
\mu_n( \sqrt{n} \boldsymbol \xi + \mathscr E) = e^{nL(\hat{\zb}) - n\hat{\zb}\cdot\mb_{\hat{\zb}}} \int_{\Cb_{\hat{\zb}}^{-1/2}\mathscr E} 
e^{-\sqrt{n}\ ^t\hat{\zb} \Cb_{\hat{\zb}}^{1/2}\yb} \gamma_{\hat{\zb},n}(\ddr\yb)\ .
$$
Therefore, to prove \eqref{AppB1}, it is enough to show that 
$$
\int_{\Cb_{\hat{\zb}}^{-1/2}\mathscr E} e^{-\sqrt{n}\ ^t\hat{\zb} \Cb_{\hat{\zb}}^{1/2}\yb} \gamma_{\hat{\zb},n}(\ddr\yb) = 
\exp\left\{\sqrt{n} \min_{\mathbf v \in \mathscr E} \hat{\mathbf z} \cdot \mathbf v\right\} n^{-(d+1)/4}[1 + o(1)] 
$$
as $n \rightarrow +\infty$. Indeed, upon denoting by $\gamma$ the standard Normal distribution on $\R^d$, we can write
\begin{align}
&\int_{\Cb_{\hat{\zb}}^{-1/2}\mathscr E} e^{-\sqrt{n}\ ^t\hat{\zb} \Cb_{\hat{\zb}}^{1/2}\yb} \gamma_{\hat{\zb},n}(\ddr\yb) = \left(\frac{1}{2\pi}\right)^{d/2}
\int_{\Cb_{\hat{\zb}}^{-1/2}\mathscr E} e^{-\sqrt{n}\ ^t\hat{\zb} \Cb_{\hat{\zb}}^{1/2}\yb - \frac 12 |\yb|^2}\ddr\yb \nonumber \\
&+ \int_{\R^d} \mathds{1}_{\Cb_{\hat{\zb}}^{-1/2}\mathscr E}(\yb)
e^{-\sqrt{n}\ ^t\hat{\zb} \Cb_{\hat{\zb}}^{1/2}\yb} [\gamma_{\hat{\zb},n}(\ddr\yb) - \gamma(\ddr\yb)]\ . \label{AppB3} 
\end{align}
For the former term on the right-hand side of \eqref{AppB3}, we have
\begin{align*}
&\left(\frac{1}{2\pi}\right)^{d/2}\int_{\Cb_{\hat{\zb}}^{-1/2}\mathscr E} e^{-\sqrt{n}\ ^t\hat{\zb} \Cb_{\hat{\zb}}^{1/2}\yb - \frac 12 |\yb|^2}\ddr\yb \\
&=\left(\frac{1}{2\pi}\right)^{d/2} \frac{1}{\sqrt{\mathrm{det}(\Cb_{\hat{\zb}})}} \int_{\mathscr E} e^{-\sqrt{n} \hat{\zb} \cdot \ub - \frac 12 |\Cb_{\hat{\zb}}^{-1/2} \ub|^2}\ddr\ub \\
&= C(d,\boldsymbol \xi, \mathscr E) \exp\left\{-\sqrt{n} \min_{\ub \in \mathscr E} \hat{\zb} \cdot \ub \right\} n^{-(d+1)/4}[1 + o(1)] 
\end{align*}
where, in the last identity, $C(d,\boldsymbol \xi, \mathscr E)$ is a constant depending solely on $(d,\boldsymbol \xi, \mathscr E)$. For completeness, the validity of such an identity follows from a 
direct application of the multidimensional Laplace method displayed, e.g., in Theorem 46 of \citet{Bre(94)}. It remains to show that the absolute value of the
latter term on the right-hand side of \eqref{AppB3} is even less significant, as it 
can be bounded by an expression like 
\begin{equation}\label{AppB5} 
C'(d,\boldsymbol \xi, \mathscr E) \exp\left\{-\sqrt{n} \min_{\ub \in \mathscr E} \hat{\zb} \cdot \ub \right\} n^{-(d+3)/4}[1 + o(1)]
\end{equation}
with some constant $C'(d,\boldsymbol \xi, \mathscr E)$ depending solely on $(d,\boldsymbol \xi, \mathscr E)$. This task can be carried out by resorting to the Plancherel identity, i.e.
$$
\int_{\R^d} \phi_n(\yb) [\gamma_{\hat{\zb},n}(\ddr\yb) - \gamma(\ddr\yb)] = \left(\frac{1}{2\pi}\right)^d 
\int_{\R^d} \hat{\phi}_n(\tb) [\hat{\gamma}_{\hat{\zb},n}(\tb) - \hat{\gamma}(\tb)] \ddr\tb
$$
where 
\begin{align*}
\phi_n(\yb) &:= \mathds{1}_{\Cb_{\hat{\zb}}^{-1/2}\mathscr E}(\yb) e^{-\sqrt{n}\ ^t\hat{\zb} \Cb_{\hat{\zb}}^{1/2}\yb} \\
\hat{\phi}_n(\tb) &:= \int_{\R^d} e^{i \tb \cdot \yb} \phi_n(\yb) \ddr \yb \\
\hat{\gamma}_{\hat{\zb},n}(\tb) &:= \int_{\R^d} e^{i \tb \cdot \yb} \gamma_{\hat{\zb},n}(\ddr\yb) \\
\hat{\gamma}(\tb) &:= \int_{\R^d} e^{i \tb \cdot \yb} \gamma(\ddr\yb)\ . 
\end{align*}
Therefore, we can write
\begin{align}
&\Big{|} \int_{\R^d} \phi_n(\yb) [\gamma_{\hat{\zb},n}(\ddr\yb) - \gamma(\ddr\yb)] \Big{|} \nonumber \\
&\leq \left(\frac{1}{2\pi}\right)^d \Big\{ \|\phi_n\|_{L^1} \Big( \int_{\{|\tb| \leq A(\hat{\zb}) \sqrt{n}\}} \!\!\!\!
|\hat{\gamma}_{\hat{\zb},n}(\tb) - \hat{\gamma}(\tb)| \ddr\tb + \int_{\{|\tb| > A(\hat{\zb}) \sqrt{n} \}} \!\!\!\! |\hat{\gamma}(\tb)| \ddr\tb \Big) \nonumber \\
&+ \Big{|} \int_{\{|\tb| > A(\hat{\zb}) \sqrt{n} \}} \!\!\!\! \hat{\phi}_n(\tb)\hat{\gamma}_{\hat{\zb},n}(\tb) \ddr\tb \Big{|} \Big\} \label{AppB4} 
\end{align}
with
\begin{align*}
\|\phi_n\|_{L^1} &:= \int_{\R^d} \phi_n(\yb) \ddr\yb = C'(d,\boldsymbol \xi, \mathscr E) \exp\left\{-\sqrt{n} \min_{\ub \in \mathscr E} \hat{\zb} \cdot \ub \right\} 
n^{-(d+1)/4}[1 + o(1)] \\
A(\hat{\zb}) &:= (\E[|\Yb_1 - \boldsymbol \xi|^2])^{3/2}/\E[|\Yb_1 - \boldsymbol \xi|^3] \ ,
\end{align*}
the former of the above identities following once again from the multidimensional Laplace method. Thus, for the first integral on the right-hand side of \eqref{AppB4}, an application of inequalities (8.22)-(8.23)
of \citet{Bhatt(10)}---which constitute a multidimensional generalization of the well-known Berry-Esseen inequalities---shows that
$$
\int_{\{|\tb| \leq A(\hat{\zb}) \sqrt{n}\}} \!\!\!\! |\hat{\gamma}_{\hat{\zb},n}(\tb) - \hat{\gamma}(\tb)| \ddr\tb \leq \frac{C''(d,\boldsymbol \xi, \mathscr E)}{\sqrt{n}}
$$ 
holds for some constant $C''(d,\boldsymbol \xi, \mathscr E)$ depending solely on $(d,\boldsymbol \xi, \mathscr E)$. Then, the second integral on the right-hand side of \eqref{AppB4} is asymptotically equivalent to
the integral 
$$
\int_{A(\hat{\zb}) \sqrt{n}}^{+\infty} e^{-\frac 12 \rho^2} \rho^{d-1} \ddr\rho \sim \exp\{-\frac 12 A(\hat{\zb})^2 n\} n^{d/2 - 1}
$$
as $n \rightarrow +\infty$. Lastly, the last integral on the right-hand side of \eqref{AppB4} has different behaviors according on whether the distribution $\mu$ is lattice or not. If $\limsup_{|\tb| \rightarrow +\infty}
|\hat{\mu}(\tb)| < 1$, then the integral at issue is exponentially small like the second integral on the right-hand side of \eqref{AppB4} described above. Otherwise, in the lattice case (which is of interest here),
we deduce the expansion \eqref{AppB5} by using the expression of $\hat{\gamma}_{\hat{\zb},n}$ (explicitly available in the lattice case) and by resorting once again to the multidimensional Laplace method.

\subsection{Proof of Theorem \ref{teo33} for $k>2$}

We start again from \eqref{ApiuLK} where the random variables $\Abb_{n}$ and $\Lb$, defined on the probability space $(\Omega,\mathscr{F},\text{Pr})$, are independent, $\Abb_n \sim Mult(n,\pb_1)$ and 
$\mathbf L = (L_1, \dots, L_{k-1})$ is a random vector with independent components with each $L_i$ having the Laplace distribution \eqref{Laplace}. 
After fixing $\lambda_{\alpha}$ such that $\K_{k-1}(\lambda_{\alpha}) = 1-\alpha$, according to Theorem \ref{teo32}, we have that
$$
1-\beta_n(\pb_1;\alpha) = \mathrm{Pr}[\Lambda_{k,n,T}(\pb_0) \leq \lambda_{\alpha}]\ . 
$$
First, for $\delta \in (0, 1/2)$, we define the event $E_{n,k}(\delta)\subset\Omega$ as
\begin{equation}\label{event_delta}
E_{n,k}(\delta) :=\left\{\omega\in\Omega\text{ $:$ } \frac{A_{n,i}(\omega)}{n} \in [\delta, 1-\delta], \text{for}\ i= 1, \dots, k-1 \right\} \ .
\end{equation}
Then, recalling that 
$$
\mathcal D_{KL}(\pb_0\ \|\ \pb_1) := \sum_{i=1}^{k-1} p_{0,i}\log\left(\frac{p_{0,i}}{p_{1,i}}\right) + \left(1- \sum_{i=1}^{k-1} p_{0,i} \right) \log\left(\frac{1- \sum_{i=1}^{k-1} p_{0,i}}{1-\sum_{i=1}^{k-1} p_{1,i}}\right)
$$
is a fixed quantity, we resort once again on a large deviation argument to choose $\delta$ sufficiently small so that
\begin{equation}\label{bound_largedevK}
\mathrm{Pr}[E_{n,k}(\delta)^c] \leq e^{-nC(\delta)}
\end{equation}
holds for some $C(\delta) > \mathcal D_{KL}(\pb_0\ \|\ \pb_1)$. In this way,  we can write
$$
1-\beta_n(\pb_1;\alpha) = \mathrm{Pr}[\Lambda_{k,n,T}(\pb_0) \leq \lambda_{\alpha}, E_{n,k}(\delta)] + \mathrm{Pr}[\Lambda_{k,n,T}(\pb_0) \leq \lambda_{\alpha}, E_{n,k}(\delta)^c]
$$
with
$$
\mathrm{Pr}[\Lambda_{k,n,T}(\pb_0) \leq \lambda_{\alpha}, E_{n,k}(\delta)^c] = o\left(e^{-n\mathcal D_{KL}(\pb_0\ \|\ \pb_1)}\right)\ .
$$
Then, for $M>0$, we define another event $E'_{n,k}(M)\subset\Omega$ as
\begin{equation}\label{event_M}
E'_{n,k}(M) :=\left\{\omega\in\Omega\text{ $:$ } \Big|\frac{\Abb_{n}(\omega)}{n} - \pb_0\Big| \leq \frac{M}{\sqrt{n}}\right\} 
\end{equation}
and we write
\begin{align*}
\mathrm{Pr}[\Lambda_{k,n,T}(\pb_0) \leq \lambda_{\alpha}, E_{n,k}(\delta)] &= \mathrm{Pr}[\Lambda_{k,n,T}(\pb_0) \leq \lambda_{\alpha}, E_{n,k}(\delta) \cap E'_{n,k}(M)] \\
&+\mathrm{Pr}[\Lambda_{k,n,T}(\pb_0) \leq \lambda_{\alpha}, E_{n,k}(\delta) \cap E'_{n,k}(M)^c]\ .
\end{align*}
The advantage of such a preliminary step is that, on $E_{n,k}(\delta) \cap E'_{n,k}(M)$, the assumptions of all the preparatory Lemmata contained in Appendix B are fulfilled and, by resorting to 
\eqref{final_expansion_LambdanTK}, we can write
\begin{align*}
&\left\{\Lambda_{k,n,T}(\pb_0) \leq \lambda_{\alpha}, E_{n,k}(\delta) \cap E'_{n,k}(M)\right\} \\
&= \left\{\ ^t\Zb_n \left(\mathbb{I}(\pb_0) - \frac{\mathsf{Var}[L_1]}{n}\mathbb{I}(\pb_0)^2 \right) \Zb_n + O\left(\frac{|\Zb_n|^3}{\sqrt{n}}\right) \leq \lambda_{\alpha}, E_{n,k}(\delta) \cap E'_{n,k}(M)\right\}
\end{align*}
where $\Zb_n$ is the same random variable as in \eqref{eq:VnK}. Moreover, if $M > \lambda_{\alpha}$, as we will choose, we have eventually that
\begin{align*}
&\left\{\Lambda_{k,n,T}(\pb_0) \leq \lambda_{\alpha}, E_{n,k}(\delta) \cap E'_{n,k}n(M)\right\} \\
&= \left\{\ ^t\Zb_n \left(\mathbb{I}(\pb_0) - \frac{\mathsf{Var}[L_1]}{n}\mathbb{I}(\pb_0)^2 \right) \Zb_n + O\left(\frac{|\Zb_n|^3}{\sqrt{n}}\right) \leq \lambda_{\alpha}, E_{n,k}(\delta)\right\}\ .
\end{align*}
It remains to show that, for a suitable choice of $M$, we have
\begin{equation} \label{emptysetK}
\left\{\Lambda_{k,n,T}(\pb_0) \leq \lambda_{\alpha}, E_{n,k}(\delta) \cap E'_{n,k}(M)^c\right\} = \emptyset
\end{equation}
eventually, yielding that 
\begin{align} \label{Power_1K}
1-\beta_n(\pb_1;\alpha) &= \mathrm{Pr}\left[\ ^t\Zb_n \left(\mathbb{I}(\pb_0) - \frac{\mathsf{Var}[L_1]}{n}\mathbb{I}(\pb_0)^2 \right) \Zb_n + O\left(\frac{|\Zb_n|^3}{\sqrt{n}}\right) \leq \lambda_{\alpha} \right] \\
&+ o\left(e^{-n\mathcal D_{KL}(p_0\ \|\ p_1)}\right)\ . \nonumber 
\end{align}
The proof of \eqref{emptysetK}, along with the proper choice of $M$, follows as an application of Lemma \ref{lem:emptyset} component-wise.

Afterwords, we come back to \eqref{Power_1K} by writing
\begin{equation} \label{eq:VnK}
\mathbf Z_n := \frac{\mathbf B_n - n\pb_0}{\sqrt{n}} =  \boldsymbol \Sigma(\pb_1)^{1/2} \mathbf V_n + \frac{1}{\sqrt{n}} \mathbf L - \sqrt{n} \boldsymbol \Delta
\end{equation}
where $\boldsymbol  \Delta := \pb_0 - \pb_1$,
$$
\mathbf V_n := \boldsymbol \Sigma(\pb_1)^{-1/2} \frac{\Abb_n - n\pb_1}{\sqrt{n}}
$$
and $\boldsymbol \Sigma(\pb_1)$ denotes the covariance matrix of $\Abb_n$. Exploiting the independence between $\Abb_n$ and $\Lb$, upon putting
$$
\mathbb{I}_n(\pb_0):= \mathbb{I}(\pb_0) - \frac{\mathsf{Var}[L_1]}{n}\mathbb{I}(\pb_0)^2\ ,
$$
we get
\begin{align} \label{Power_2K}
1-\beta_n(\pb_1;\alpha) &= \left(\frac{1}{\cepsm}\right)^{k-1} \sum_{\mathbf l \in\{-m, \dots, m\}^{k-1}} e^{-\varepsilon |\mathbf l|} \times \\
&\times \mathrm{Pr}\left[\ ^t\left(\Sigma(\pb_1)^{1/2} \mathbf V_n + \frac{1}{\sqrt{n}} \mathbf l - \sqrt{n} \boldsymbol \Delta\right) \mathbb{I}_n(\pb_0)
\left(\Sigma(\pb_1)^{1/2} \mathbf V_n + \frac{1}{\sqrt{n}} \mathbf l - \sqrt{n} \boldsymbol \Delta\right) \right. \nonumber \\
&\left. + O\left(\frac{\left|\Sigma(\pb_1)^{1/2} \mathbf V_n + \frac{1}{\sqrt{n}} \mathbf l - \sqrt{n} \boldsymbol \Delta\right|^3}{\sqrt{n}}\right) \leq \lambda_{\alpha} \right] + 
o\left(e^{-n\mathcal D_{KL}(\pb_0\ \|\ \pb_1)}\right)\ . \nonumber 
\end{align}
Now, we apply \eqref{main_ld} to a sequence $\{\Xb_n\}_{n \geq 1}$ of i.i.d. random vectors taking values in $\mathbb X_k :=
\{\xb = (x_1, \dots, x_{k-1}) \in \{0,1\}^{k-1}\ |\ x_1+ \dots + x_{k-1} \leq 1\}$, in such a way that 
\begin{equation} \label{MultiBernoulli}
\mathrm{Pr}\left[ \Xb_1 = \underbrace{(0, \dots, 0, 1, 0, \dots, 0)}_\text{1\ at\ the\ i-th\ position} \right] = p_{1,i} \qquad (i=1, \dots, k-1)\ .
\end{equation}
Moreover, we can put $\xib = \boldsymbol \Delta - \frac{1}{n} \mathbf l$ and
$$
\mathscr E = \{\xb \in \mathbb R^{k-1}\ |\ \ ^t\xb \mathbb{I}_n(\pb_0) \xb + O(|\xb|^3/\sqrt{n}) \lambda_{\alpha}\}\ . 
$$
Actually, the equation that defines $\mathscr E$ could be specified in a more precise way, as we have done for $k=2$. In fact, exploiting the analogy with the case $k=2$, we could write the big-$O$ term as
$$
\frac{1}{\sqrt{n}} \sum_{\substack{\nub \in \mathbb N_0^{k-1}\ :\\ |\nub| \leq 3}} \mathfrak B_{n,\nub}^{(\varepsilon, m)}(\pb_0) \xb^{\nub} + \frac{1}{n} \sum_{\substack{\nub \in \mathbb N_0^{k-1}\ :\\ |\nub| \leq 4}} \mathfrak C_{n,\nub}^{(\varepsilon, m)}(\pb_0) \xb^{\nub} + O(n^{-3/2})
$$
for suitable tensors $\mathfrak B_{n,\nub}^{(\varepsilon, m)}(\pb_0)$ and $\mathfrak C_{n,\nub}^{(\varepsilon, m)}(\pb_0)$ of third and fourth order respectively, both satisfying analogous expansions similar to
\eqref{B_gothic}--\eqref{C_gothic}. However, despite the cumbersome computation that would have needed to derive such quantities, we have already learnt from the case $k=2$ that they do not play any active role 
in the final result encapsulated in \eqref{LD1}. More precisely, we know that the 0-order terms in $\mathfrak B_{n,\nub}^{(\varepsilon, m)}(\pb_0)$ and $\mathfrak C_{n,\nub}^{(\varepsilon, m)}(\pb_0)$ would ensue 
from the Taylor expansion of the map $\epsilonb_n \mapsto \mathcal D_{KL}(\pb_0 + \epsilonb_n\ \|\ \pb_0)$, while the successive terms in their expansion---which explicitly depend on the Laplace perturbation---do
not affect the expressions of $c_1(\mathbf p_0, \mathbf p_{1}; \lambda_{\alpha})$ and $c_2(\mathbf p_0, \mathbf p_{1}; \lambda_{\alpha})$ in \eqref{LD1}. Indeed, after defining the vector $\hat{\mathbf z}$ by means 
of the identity $\nabla L(\hat{\mathbf z}) = \boldsymbol \xi$, where $L(\mathbf z) := \log \E[e^{\mathbf z\mathbf X_1}] - \pb_1 \cdot \zb$, we are now able to derive also the term $-\frac{1}{n}\log \mathfrak M_L\left(\nabla_{\mathbf p_0}\mathcal D_{KL}(\mathbf p_0\,\|\, \mathbf p_{1})\right)$ in \eqref{LD1}. After noticing that the above distribution \eqref{MultiBernoulli} is a member of the regular exponential family parametrized by the mean, we can resort to Lemma \ref{lem:exp_fam} and apply \eqref{kullback_cramerK} with $\boldsymbol\theta = \pb_1$ and $\boldsymbol\tau = \pb_0 + 
\frac 1n \lb + \boldsymbol \rho_n(\pb_0, \pb_1; \lambda_{\alpha})$, where $\boldsymbol \rho_n(\pb_0, \pb_1; \lambda_{\alpha}) = O(1/n)$, to conclude that
\begin{align*}
\hat{\mathbf z} \cdot \nabla L(\hat{\mathbf z}) - L(\hat{\mathbf z}) &=  \mathcal D_{KL}\left( \pb_0 + \frac 1n \lb + \boldsymbol \rho_n(\pb_0, \pb_1; \lambda_{\alpha})\ \|\ \pb_1\right)\\
&= \mathcal D_{KL}\left( \pb_0\ \|\ \pb_1\right) + \frac 1n \lb \cdot \nabla_{\pb_0} \mathcal D_{KL}\left( \pb_0\ \|\ \pb_1\right) \\
&+ \boldsymbol \rho_n(\pb_0, \pb_1; \lambda_{\alpha}) \cdot \nabla_{\pb_0} \mathcal D_{KL}\left( \pb_0\ \|\ \pb_1\right) \ .
\end{align*}
Therefore, combining this last equation with \eqref{main_ld} and \eqref{Power_2K} we get 
\begin{align*}
1-\beta_n(\pb_1;\alpha) &\sim n^{-k/4} \mathfrak M_L\left(\nabla_{\pb_0} \mathcal D_{KL}\left( \pb_0\ \|\ \pb_1\right)\right) \times \\
&\times \exp\Big\{-n \left[ 
\mathcal D_{KL}\left( \pb_0\ \|\ \pb_1\right) + \boldsymbol \rho_n(\pb_0, \pb_1; \lambda_{\alpha}) \cdot \nabla_{\pb_0} \mathcal D_{KL}\left( \pb_0\ \|\ \pb_1\right) \right] \\
& + \sqrt{n} \min_{\mathbf v \in \mathscr E} \hat{\mathbf z} \cdot \mathbf v \Big\}
\end{align*}
which entails the thesis of the theorem, upon noticing that the quantity $\min_{\mathbf v \in \mathscr E} \hat{\mathbf z} \cdot \mathbf v$ contains more explicit terms that depend on the Laplace perturbation only at 
the level $O(1/n)$. This ends the proof. 

\subsection{Proof of Proposition \ref{prop:large_k}}

First, we notice that 
\begin{equation} \label{CFEB}
\log \mathfrak M_L(\zb) = \sum_{i=1}^k \log \E[\exp\{L_i z_i\}] \qquad (\zb \in \mathbb R^k)
\end{equation}
and $\E[\exp\{L_i z\}] = \frac{1}{c_{\varepsilon,m}} \sum_{l=-m}^m \exp\{-\varepsilon |l| + lz\}$. In particular, if $z>0$, the main contribution comes from the sum over $l \in \{0, \dots, m\}$, so that 
$\E[\exp\{L_i z\}] = \frac{1}{c_{\varepsilon,m}} \sum_{l=0}^m \exp\{(z-\varepsilon) l\} + R_{\varepsilon,m}(z)$, where $R_{\varepsilon,m}(z)$ denotes a small remainder term. In particular,
if $z \geq \varepsilon(1+\eta)$, then
$$
\E[\exp\{L_i z\}] \geq \frac{1}{c_{\varepsilon,m}} \sum_{l=0}^m e^{\varepsilon\eta l} + R_{\varepsilon,m}(z)
$$
holds. If $\nu_k(\mathbf p_{0}, \mathbf p_{1}; \varepsilon, \eta) > 0$, then the relevant part of the sum on the right-hand side of \eqref{CFEB} is given by the sum of 
$\nu_k(\mathbf p_{0}, \mathbf p_{1}; \varepsilon, \eta)$ summands, each of which greater than $\log \E[\exp\{L_1 \varepsilon(1+\eta)\}]$. Finally, if $\nu_k(\mathbf p_{0}, \mathbf p_{1}; \varepsilon, \eta) = 0$ and $\varepsilon^2(1+\eta)^2\mathsf{Var}[L] < 1$, we use the Taylor expansion of the logarithm to obtain that 
$$
\log \E[\exp\{L_i z\}] = \frac 12 \mathsf{Var}[L] z^2 + o(z^2)
$$
holds as $z \to 0$. Thus, if all the components of $\nabla_{\mathbf p_0}\mathcal D_{KL}(\mathbf p_0\,\|\, \mathbf p_{1})$ are less than $\varepsilon (1+\eta)$, we get that the main contribution 
in the sum on the right-hand side of \eqref{CFEB} is given by $\frac k2 \mathsf{Var}[L] [\varepsilon(1+\eta)]^2$. This completes the proof. 


\section*{Acknowledgement}

The authors are very grateful to an Associate Editor and three Referees for their comments and suggestions that improved remarkably the paper. Emanuele Dolera and Stefano Favaro are grateful to Professor Yosef Rinott for suggesting the problem and for the numerous stimulating conversations and valuable suggestions. Emanuele Dolera and Stefano Favaro received funding from the European Research Council (ERC) under the European Union's Horizon 2020 research and innovation programme under grant agreement No 817257. Emanuele Dolera and Stefano Favaro gratefully acknowledge the financial support from the Italian Ministry of Education, University and Research (MIUR), ``Dipartimenti di Eccellenza" grant agreement 2018-2022.



\end{document}